\documentclass{amsart}

\usepackage{Preset}
\usepackage[makeroom]{cancel}
\usepackage{nccmath}

\usepackage{mathrsfs}
\usepackage[latin1]{inputenc}
\usepackage[T1]{fontenc}
\usepackage{tikz}
\usepackage{tikz-cd}
\usepackage[colorlinks=true,linkcolor=blue!50!black,anchorcolor=red,citecolor=blue,filecolor=black,menucolor=black,runcolor=black,urlcolor=black]{hyperref}

\makeatletter
\def\hookrightarrowfill@{%
  \arrowfill@\relbar\relbar\hookrightarrow
}
\newcommand{\xhook}[2][]{%
  \ext@arrow 0359\hookrightarrowfill@{#1}{#2}%
}
\makeatother

\newcommand{\seq}[1]{\left\langle #1 \right\rangle}

\usetikzlibrary{decorations.pathmorphing}
\usetikzlibrary{shapes,arrows,graphs}
\setlist[enumerate,1]{label={\textnormal{(\arabic*)}}}

\title{The combinatorics of sector renormalization}
\author{Willie Rush Lim}
\address{Dept. of Mathematics, Brown University, RI 02912}
\email{willie\_rush\_lim@brown.edu}

\begin{document}

\begin{abstract}
    The goal of this note is to systematically develop the fundamental arithmetic and combinatorial properties of the sector renormalization operation on rigid rotations.
    We employ the specific framework of modified continued fractions appropriate for sector renormalization and analyze their properties.
    By allowing infinite first return times, this framework yields a dynamical compactification characterized by a universal property. 
    We also discuss the corresponding natural extension and introduce the notion of a time (semi-)group. 
    For example, we demonstrate how a bi-infinite tower of sector renormalizations of irrational rotations can be packaged within a single dynamical plane as a cascade of translations. 
    This note will serve as a foundational combinatorial tool for studying the geometric properties of sector renormalizations of holomorphic maps with irrationally indifferent fixed points, particularly neutral quadratic polynomials.
\end{abstract}

\maketitle

\setcounter{tocdepth}{1}
\tableofcontents

\section{Introduction}

The dynamics of an irrational rotation 
\[
    \rotate_{\theta} : z \mapsto e^{2\pi i \theta}z
\]
are traditionally classified via the entries $a_n$ of its regular continued fraction expansion.
Various ideas behind such relation goes back to the classical work of Poincar\'e, Denjoy, Siegel, and Arnol'd; refer to \cite{dMvS93} for historical background.
The compactification of irrationals by allowing the entry $a_n$ to take the value $\infty$ was used in Lanford's program \cite{Lan88} on the renormalization theory of analytic critical circle maps.
This program was completed in the series of works by de Melo, de Faria, and Yampolsky; see Section \ref{sss:circle-map} for details.
A related renormalization program for neutral quadratic polynomials is one of the central topics in complex dynamics.

This note explores a related symbolic coding arising from an operation called \emph{sector renormalization}. 
Let
\[
    \Theta := \left( -\frac{1}{2},\frac{1}{2} \right) \backslash \mathbb{Q}.
\]
For any irrational $\theta$ in $\Theta$, let $S$ be a fundamental sector of $\rotate_\theta$ inside the closed unit disk $\overline{\D}$ that is cut out by a pair of radial arcs subtending an angle of $2\pi|\theta|$ radians. 
The power map $\psi(z) = z^{1/|\theta|}$ glues the radial sides of $S$ together, mapping the sector back onto $\overline{\D}$. 
The first return map of $\rotate_\theta$ to $S$ is a piecewise continuous map consisting of a pair of iterates $(\rotate_\theta^{\bar{a}}, \rotate_\theta^{\bar{a}+1})$. Under $\psi$, this pair projects onto a new irrational rotation $\rotate_{\gauss(\theta)}: \overline{\D} \to \overline{\D}$, where the map $\gauss: \Theta \to \Theta$ is uniquely defined by the relation
\[
    \gauss(\theta) \equiv -\frac{1}{\theta} \quad (\textnormal{mod }1).
\]
Iterating this operation yields a symbolic representation of $\theta$ by an infinite sequence 
\[
\langle(\varepsilon_n, \bar{a}_n)\rangle_{n \geq 1},
\]
where $\varepsilon_{n+1} \in \{-1, +1\}$ and $\bar{a}_{n+1} \in \N_{\geq 2}$ track the orientation and first return time at renormalization level $n$.

This construction provides a combinatorial model for the classical sector renormalization of arbitrary holomorphic germs pioneered by Douady--Ghys \cite{D87} and Yoccoz \cite{Yoc95}.
It connects to the modified continued fractions commonly utilized in near-parabolic renormalization theory; see Section \ref{ss:motivation} for historical remarks. 

It has always been assumed that various dynamically motivated encodings of $\Irrat$ are ``routinely'' equivalent and lead to the same compactification of $\Irrat$.
We justify the appropriateness of the compactification $\Theta \leadsto \TheCpt$ by allowing entries $\bar{a}_n$ to take the value $\infty$ by establishing two universal properties of $\TheCpt$. 
We also analyze the corresponding inverse limit $\TheBiCpt$ for applications to the unified theory of neutral renormalization.
Ultimately, our goal is to systematically describe:
\begin{enumerate}
    \item[(I)] the dynamical and arithmetic aspects of $\gauss: \Theta \to \Theta$;
    \item[(II)] the appropriate dynamical compactification of $(\Theta,\gauss)$;
    \item[(III)] the properties of the natural extension of $\gauss$.
\end{enumerate}

\subsection{Modified continued fractions}
\label{ss:intro-2}

The arithmetic properties of $\gauss$ boil down to a variant of the nearest integer continued fraction expansion.
It first appeared in the work of Hurwitz \cite{Hur} in 1889.
We take a more dynamical point of view and emphasize that the expansion is parameterized by the infinite sequence space
\[
    \The := \left\{ \seq{ (\varepsilon_n, \bar{a}_n) }_{n\geq 1} \: : \: \varepsilon_n \in \{-1,+1\}, \bar{a}_n \in \N_{\geq 2} \right\}.
\]
For any sequence of non-zero integers $x_1,x_2,x_3,\ldots$, denote
\begin{equation}
    [ x_1,x_2,x_3,\ldots ]_-  := \frac{1}{x_1- \frac{1}{x_2-\frac{1}{x_3 - \ldots}}}.
\end{equation}
When $|x_n| \geq 2$ for cofinitely many $n \geq 1$, the expression above is a well-defined irrational number.
One of the theorems in this note is the following.

\begin{thmx}
\label{main-thm-1}
    The function $\mathfrak{X}: \The \to \Irrat$ given by
    \begin{align}
    \label{eqn:modified-ctd-frac}
        \mathfrak{X}( \seq{ (\varepsilon_n, \bar{a}_n) }_{n\geq 1}) =  [\varepsilon_1 b_1, \varepsilon_2 b_2, \varepsilon_3 b_3,\ldots ]_- 
        \quad \text{ where } \quad
        b_n = \bar{a}_n + \frac{1+\varepsilon_n \varepsilon_{n+1}}{2}
    \end{align}
    is a homeomorphism conjugating the standard shift map $\shift : \The \to \The$ with the map $\gauss: \Irrat \to \Irrat$.
\end{thmx}

The proof relies on the study of Diophantine approximations related to the modified continued fraction above.
For $\sigma = \seq{ (\varepsilon_n, \bar{a}_n) }_{n\geq 1} \in \The$ and $b_n$'s being as introduced in the theorem, we can write 
\[
    [ \varepsilon_1 b_1, \varepsilon_2 b_2, \ldots, \varepsilon_n b_n ]_-  = \frac{p_{[n]}}{q_{[n]}},
\] 
where $p_{[n]}=p_{[n]}(\sigma)$ and $q_{[n]}=q_{[n]}(\sigma)$ are co-prime integers and $q_{[n]} \geq 1$.
The $q_{[n]}$'s can be characterized in many other ways including the first return times of an iterated pre-renormalization (Proposition \ref{prop:q[n]-02}).
Theorem \ref{main-thm-1} follows from the arithmetic properties of these convergents.

In Section \ref{sec:continued-fraction}, we compare our approach with the regular continued fraction expansion (the standard $a_n$'s) and the nearest integer continued fraction expansion.
We establish a conversion from our expansion (\ref{eqn:modified-ctd-frac}) to the regular expansion, which will come in handy in subsequent applications.
The regular expansion is related to commuting pair renormalization in the dynamical theory of circle maps, whereas the nearest integer expansion is similar to (\ref{eqn:modified-ctd-frac}) and has also been used several times in complex dynamics; see Section \ref{ss:motivation} for a historical account.
Figure \ref{fig:diagram} summarizes their relationship.
Note that even though the $b_n$'s are more desirable in the number-theoretic point of view, they satisfy an intricate admissibility rule (depending on $\varepsilon_n \varepsilon_{n+1}$) and are not easily describable from the geometric procedure of sector renormalization.

\begin{figure}
    \centering
    \begin{tikzpicture}
        \node at (-4,0) {$\{a_n\}_{n\geq 1}$};
        \node at (1,0) {$\{(\varepsilon_n,\bar{a}_n)\}_{n\geq 1}$};
        \node at (4,0) {$\{(\varepsilon_n,b_n)\}_{n\geq 1}$};
        \node at (2.5,0) {$\approx$};
        \node at (-4,-1.5) {$\{q_n\}_{n\geq 0}$};
        \node at (4,-1.5) {$\{q_{[n]}\}_{n\geq 0}$};
        \node at (-4,2) {commuting pair};
        \node at (-4,1.65) {renormalization};
        \node at (1,2) {sector};
        \node at (1,1.65) {renormalization};
        
        \draw[-latex] (-4,1.3) -- (-4,0.4);
        \draw[-latex] (-4,-0.4) -- (-4,-1.1);
        \draw[-latex] (1,1.3) -- (1,0.4);
        \draw[-latex] (4,-0.4) -- (4,-1.1);
        \draw[-latex] (-2.75,-1.5) -- (2.75,-1.5);
        \node at (0,-1.75) {\scalebox{0.85}{ignore \emph{negligible} levels}};
    \end{tikzpicture}
    \caption{Relationship between the different codings}
    \label{fig:diagram}
\end{figure}

\subsection{Compactification and natural extension}
\label{ss:intro-3}

Below are three embeddings of the space of irrationals $\Irrat$ into compact spaces that are of particular interest.
\begin{itemize}
    \item The modular embedding $\mu: \Irrat \to \T = \R/\Z$. \\
    This is the naive embedding given by $\mu(\theta) = \theta$ (mod $1$).
    \item The segment embedding $\eta: \Irrat \to [0,1]$. \\
    This is given by $\eta(\theta) = \theta$ if $\theta > 0$ and $\eta(\theta) = \theta+1$ if $\theta < 0$.
    It satisfies the following commutative diagram.
\begin{center}
\begin{tikzcd}
    \Irrat \arrow[rd, "\mu"'] \arrow[r, "\eta"] & {[0,1]} \arrow[d, "0 \sim 1"] \\
    & \T
\end{tikzcd}
\end{center}
    The map $\gauss$ naturally extends to a self-map of $\T$ that is continuous everywhere except at the origin, at which wild oscillations are produced on both the left and the right side.
    See Figure \ref{fig:gauss}.
    Unlike $\mu$, the embedding $\eta$ has the advantage of distinguishing $0^+$ and $0^-$.
    \item The dynamical embedding $\ord: \Irrat \to \Omega$. \\
    As the name suggests, this map has direct ties to the applications in neutral renormalization. (See Section \ref{ss:motivation}.)
    Geometrically, it encodes the cyclic order of forward orbits of successive pre-renormalizations of $\rotate_\theta$.
    Here is how it works.
    Given any two positive integers $k$ and $l$, let $\Omega_{k,l}$ denote the set of maps $f:\{0,1\ldots,k\}^{l+1} \to \T$ modulo isotopy of the image of $f$ in $\T$.
    This set corresponds to the set of all circular permutations of $s$ elements where $1 \leq s \leq ((k+1)^{l+1}-1)!$.
    The set $\Omega_{k,l}$ is finite and it will be equipped with the discrete topology.
    Consider the infinite product space
\[
    \Omega = \prod_{k, l \geq 1} \Omega_{k,l}.
\]
    Given an irrational $\theta \in \Irrat$, 
    we denote by $\ord_{k,l}(\theta) \in \Omega_{k,l}$ the ambient isotopy class of the embedding 
\[
    (j_0,\ldots,j_{l}) \mapsto 
    \rotate_{\theta}^{j_0 q_{[0]}(\theta) + \ldots + j_l q_{[l]}(\theta)} (1).
\]
    The maps $\ord_{k,l}: \Irrat \to \Omega_{k,l}$ altogether produce the embedding
\[
    \ord: \Irrat \to \Omega, \qquad
    \ord(\theta) = \{\ord_{k,l}(\theta)\}_{k,l}.
\]
\end{itemize}

Via the map $\mathfrak{X}$ from Theorem \ref{main-thm-1}, the shift map $\shift$ on the compact sequence space
    \[
        \TheCpt := \left\{ \seq{ (\varepsilon_n, \bar{a}_n) }_{n\geq 1} \: : \: \varepsilon_n \in \{-1,+1\}, \bar{a}_n \in \N_{\geq 2} \cup \{\infty\} \right\}
    \]
is a compactification of $(\Irrat, \gauss)$.
The three embeddings listed above indeed extend to continuous maps 
\[
    \bar{\mu}: \TheCpt \to \T, \qquad
    \bar{\eta}: \TheCpt \to [0,1], \qquad
    \overline{\ord}: \TheCpt \to \Omega
\]
respectively.
We prove:

\begin{thmx}[Universal property of $\TheCpt$]
\label{main-thm-2}
    The compactification $(\TheCpt, \shift)$ of $(\Irrat,\gauss)$ is uniquely characterized by one of the following properties.
    \begin{enumerate}
        \item $(\TheCpt, \shift)$ is the smallest compactification of $(\Irrat,\gauss)$ that respects $\eta$, and it has the universal property that every other compactification of $(\Irrat,\gauss)$ respecting $\eta$ factors to $(\TheCpt, \shift)$.
        \item $(\TheCpt, \shift)$ is the smallest compactification of $(\Irrat,\gauss)$ that respects $\ord$, and it has the universal property that every other compactification of $(\Irrat,\gauss)$ respecting $\ord$ factors to $(\TheCpt, \shift)$.
    \end{enumerate}
    Consequently, every compactification of $(\Irrat, \gauss)$ that respects $\eta$ also respects $\ord$, and vice versa.
\end{thmx}

A more precise formulation can be found in Section \ref{ss:universality}.
In spirit, it is the dynamical opposite of Stone-\v{C}ech compactification in topology.
The minimality property fails for the modular embedding (Proposition \ref{prop:multiplier-failure}).

For each element $\ttheta$ of $\TheCpt$, we can assign a totally ordered additive abelian group called the \emph{time group} $(\mathcal{T}_{\ttheta}^{\textnormal{gp}}, >)$ of $\ttheta$ whose generators are a generalization of the denominator continuants $q_{[n]}$'s discussed above.
The positive cone $\mathcal{T}_{\ttheta} = \{p \in \mathcal{T}_{\ttheta}^{\textnormal{gp}} \: : \: p>0 \}$, which we call the \emph{time semigroup}, will be important in applications.

In Section \ref{sec:natural-extension}, we study the natural extension $\shift: \TheCpt \to \TheCpt$, which is the bi-infinite shift space 
\[
\shift: \TheBiCpt \to \TheBiCpt
\]
obtained by taking the inverse limit.
We show that each element $\tttheta$ of $\TheBiCpt$ induces a totally ordered group $(\Tbold_{\tttheta}^{\textnormal{gp}}, >)$ with a countably infinite number of generators $Q_{[n]}$, $n \in \Z$ mimicking the properties of denominator continuants.
Again, the \emph{time semigroup} 
$\Tbold_{\tttheta} = \{P \in \Tbold_{\tttheta}^{\textnormal{gp}} \: : \: P>0 \}$
is useful in applications.

Throwing out $\infty$ gives us the shift-invariant subspace $\TheBi$ of $\TheBiCpt$.
Every element of $\TheBi$ induces a unique bi-infinite tower of sector renormalizations of irrational rotations.
In Section \ref{ss:cascade}, we describe how this tower can be fit into a single dynamical plane as a group of real translations, called \emph{cascades}.
Translational cascades with periodic combinatorics first appeared in \cite[Section 2]{DL23}.

\subsection{Motivation and historical remarks}
\label{ss:motivation}

This note provides the background on the combinatorics of neutral renormalization in holomorphic dynamics.

\subsubsection{On sector renormalization}

The use of Diophantine approximation in studying linearizability of neutral holomorphic germs dates back to the work of Siegel \cite{Si42}.
As previously mentioned, sector renormalization for neutral holomorphic germs was initially studied by Douady-Ghys, and Yoccoz.
Variants of sector renormalization for neutral quadratic polynomials $f_\theta(z) = e^{2\pi i \theta}z + z^2$ have been studied under some arithmetic conditions.
For bounded type irrationals $\theta$, Dudko, Lyubich, and Selinger \cite{DLS,DL23} studied Siegel-Pacman renormalization to prove parameter universality and local connectivity of the Mandelbrot set near Siegel parameters.
For high type irrationals $\theta$, Inou and Shishikura studied near-parabolic renormalization \cite{IS} which resulted in a number of significant applications, e.g. \cite{BC12, CC15, Che13, Che19, SY24, Che25}. 
A variant of the modified continued fraction expansion presented in Theorem \ref{main-thm-1} has appeared a number of times in holomorphic dynamics, e.g. \cite{Yoc95,IS,CC15,AC17,Che25}; see Section \ref{ss:nearest-integer}.

In \cite{DL26}, Dudko and Lyubich proved that sector renormalizations of quadratic polynomials $f_\theta(z) = e^{2\pi i \theta}z + z^2$ with irrational rotation numbers $\theta$ can be constructed in such a way that the resulting maps $\renorm_{\textnormal{sec}}^n f_{\theta}$ are pre-compact and capture the critical orbit well.
Their result is based on a prior work \cite{DL22} on \emph{pseudo-Siegel disks} where the regular expansion $\{a_n\}_{n\geq 1}$ was used.
In \cite{DL26}, rotation numbers are relabeled using $\{ (\varepsilon_n, \bar{a}_n) \}_{n\geq 1}$ to ensure compatibility with sector renormalization (see Figure \ref{fig:diagram}).
Exploring the subtleties between the two symbolic codings is one of the motivations for this very note.

Every element in the closure of the space of forward renormalization orbits $\{ \seq{ \renorm_{\textnormal{sec}}^{n+m} f_{\theta} }_{n \geq 0} \: : m \geq 0, \theta \in \Irrat\}$
has combinatorics that can be uniquely encoded by an element $\ttheta = \seq{ (\varepsilon_n, \bar{a}_n) }_{n\geq 1}$ of the compactification $\TheCpt$.
It also comes with an associated full Lavaurs-Epstein renormalization tower that can be parametrized by the time semigroup $\mathcal{T}_{\ttheta}$ of $\ttheta$.
Any term $\bar{a}_{n+1} = \infty$ indicates the occurrence of simple parabolic bifurcation at renormalization level $n$.
The extension $\bar{\eta}$ reflects the necessity of distinguishing top versus bottom Lavaurs parabolic enrichments, while the fiber structure of $\bar{\mu}$ reflects the necessity of performing parabolic blow-ups on the Mandelbrot set to define a geometric topology where Julia sets are continuous. 
Meanwhile, the extension $\overline{\text{ord}}$ ensures that the cyclic arrangement of critical orbits under successive pre-renormalizations varies continuously. 
Together, these properties provide the framework necessary for further applications, including the uniform continuity of \emph{Mother Hedgehogs} \cite{DL26}, the zero area of the postcritical set \cite{Lim26}, and the rigidity of the attractor of renormalization of neutral quadratic polynomials \cite{DLL}.


Let us elaborate on the last point above.
Elements of the attractor are bi-infinite towers $\seq{ f_n }_{n\in\Z}$ of sector renormalization and their combinatorial data are elements of the natural extension $\TheBiCpt$. 
With Dudko and Lyubich, we prove:

\begin{theorem}[\cite{DLL}]
\label{thm:tower-rigidity}
    The full renormalization attractor for neutral quadratic polynomials is combinatorially rigid. In particular, modulo conformal conjugacy, the renormalization attractor is conjugate to the shift map $\shift: \TheBiCpt \to \TheBiCpt$.
\end{theorem}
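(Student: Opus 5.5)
The plan is to construct a combinatorial coding map from the attractor to $\TheBiCpt$ and show it is a homeomorphic conjugacy once towers are identified modulo conformal conjugacy. Let $\mathcal{A}$ denote the full renormalization attractor, i.e. the set of bi-infinite towers $\seq{f_n}_{n\in\Z}$ obtained as limits of forward orbits $\seq{\renorm_{\textnormal{sec}}^{n+m} f_{\theta_m}}_{n\geq -m}$. I would first define $\Phi:\mathcal{A}\to\TheBiCpt$ level by level. Each $f_n$ carries an orientation $\varepsilon_{n+1}$ and a first return time $\bar a_{n+1}\in\N_{\geq2}\cup\{\infty\}$, with $\bar a_{n+1}=\infty$ exactly at a simple parabolic bifurcation. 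The pre-compactness results of \cite{DL26} guarantee that limits exist. The continuity of $\bar\eta$ and $\overline{\ord}$ on $\TheCpt$, together with Theorem~\ref{main-thm-2}, guarantees that this coding is continuous under geometric limits: the cyclic order $\overline{\ord}$ of critical orbits under successive pre-renormalizations passes to the limit, and by the universal property it determines the point of $\TheCpt$. By construction, $\Phi$ intertwines renormalization with $\shift$.

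Surjectivity comes next. For any $\tttheta\in\TheBiCpt$, I would truncate to the forward sequence starting at level $-m$ and realize it, or an irrational approximation of it in $\Theta$ obtained via Theorem~\ref{main-thm-1}, by some $f_{\theta_m}$. Renormalizing $m$ times and extracting a convergent subsequence by pre-compactness then gives a tower whose code is $\tttheta$. Here continuity of $\Phi$ and density of $\TheBi$ handle the entries equal to $\infty$.

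The heart of the theorem is injectivity, i.e. combinatorial rigidity: two towers with the same code $\tttheta$ are conformally conjugate. My approach would be a pullback argument in the style of McMullen's tower rigidity.
\begin{enumerate}
\item Use the equal combinatorics to build a quasiconformal conjugacy between the two towers. The model for this is the cascade of translations of Section~\ref{ss:cascade}, parametrized by the time semigroup $\Tbold_{\tttheta}$, combined with the a priori bounds on pseudo-Siegel disks from \cite{DL22,DL26}. The bounds keep the dilatation uniform at every level $n\in\Z$.
\item Show that the conjugacy is conformal off the postcritical set. The Beltrami differential is invariant under the whole tower, and the translation cascade acts on the plane without invariant line fields, which forces the dilatation to vanish.
\item Show that the postcritical set carries no measurable invariant line field. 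This is where the zero-area result \cite{Lim26} for the postcritical set enters.
\end{enumerate}

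I expect the main obstacle to be the parabolic levels where $\bar a_n=\infty$. There, return times degenerate and the conjugacy must be assembled from Lavaurs--Epstein enrichments rather than from genuine iterates. My first attempt would be to work entirely with the time semigroup $\Tbold_{\tttheta}$, which treats finite and infinite entries uniformly, and to obtain the parabolic case as a limit of the irrational case using the uniform bounds.
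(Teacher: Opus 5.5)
\textbf{Genuine gap.} The paper does not prove this theorem; it quotes it from \cite{DLL} and only sketches the route. A bi-infinite tower is placed in a single plane as a neutral cascade $\mathbf{F}$ of $\sigma$-proper maps parametrized by $\Tbold_{\tttheta}$. Uniform complex bounds are then proved for neutral cascades, and these bounds are used to construct an \emph{affine} conjugacy between any two combinatorially equivalent cascades. The translational cascade of Section~\ref{ss:cascade} enters only as the model of $\mathbf{F}$ on its Siegel set, which exists only under the Brjuno condition. Your coding and surjectivity steps are plausible within the paper's framework. The gap is in your injectivity argument, specifically step (2), which fails for two reasons.

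\begin{itemize}
\item The claim that the translation cascade has no invariant line fields is false. $F_{\tttheta}$ acts by real translations, so every measurable line field depending only on the imaginary part of $z$ (for instance a constant one) is invariant. Nothing forces the dilatation to vanish.
\item $\mathbf{F}$ is conjugate to $F_{\tttheta}$ only on the Siegel set, and for non-Brjuno codes there is no Siegel set at all. On the Siegel set you should not try to show the dilatation vanishes. Instead, replace your conjugacy there by the conformal one obtained through the common model $F_{\tttheta}$, which depends only on $\tttheta$ by Proposition~\ref{prop:parametrization}.
\end{itemize}

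What remains, and what your plan never addresses, is the nonlinear part: the complement of the Siegel set and of the postcritical set. Zero area of the postcritical set \cite{Lim26} only rules out line fields on the postcritical set itself. The Julia set of a neutral quadratic polynomial can have positive area \cite{BC12}, so excluding invariant line fields off the postcritical set is a genuine no-invariant-line-fields statement, and nothing in your proposal yields it. This is exactly where uniform complex bounds for neutral cascades at all scales are needed, for example to run a McMullen-type blow-up argument along the tower uniformly in unbounded combinatorics; that is the content of \cite{DLL}. Step (1) has the same defect: the translational model gives no quasiconformal conjugacy off the Siegel set, so the bounds are needed to build one.

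Finally, the parabolic case cannot be obtained as a limit of the irrational case. Uniform bounds give precompactness, not uniqueness of limits. Two towers with the same enriched code could a priori arise as limits along different irrational codes, with different Lavaurs--Epstein phases at the parabolic levels. Ruling this out is precisely rigidity at enriched codes. That is why the argument should run directly on cascades parametrized by the non-Archimedean $\Tbold_{\tttheta}$, as the paper's sketch indicates.
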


In the proof, we bring a bi-infinite tower of renormalizations $\seq{ f_n }_{n\in\Z}$ to a single dynamical plane of a semigroup $\mathbf{F}$ of $\sigma$-proper holomorphic maps onto $\C$ called a \emph{neutral cascade} which can parametrized by the time semigroup $\Tbold_{\tttheta}$ of $\tttheta$.
(For periodic combinatorics, this cascade was studied in \cite{McM98,DL23}.)
We then prove uniform complex bounds for neutral cascades and apply them to construct an affine conjugacy between any two combinatorially equivalent neutral cascades.
Neutral cascades are non-linear analogues of translational cascades discussed in Section \ref{ss:cascade}. 
Under some arithmetic condition (called the Brjuno condition), $\mathbf{F}$ admits an invariant topological half-plane (called the Siegel set) in which $\mathbf{F}$ is analytically conjugate to a translational cascade (c.f. Section \ref{ss:cascade}).

\subsubsection{On commuting pair renormalization}
\label{sss:circle-map}

Lastly, let us comment on the connection between the renormalization theory of neutral quadratic polynomials and that of real-analytic (uni-)critical circle maps with irrational rotation number.

In the theory of critical circle maps, renormalization is defined in terms of commuting pairs supported on real intervals and the combinatorics can be described in terms of the regular continued fraction expansion $\{a_n\}_{n\geq 1}$ together with the classical Gauss map $G(\theta) = \{ \frac{1}{\theta} \}$.
It was initially observed by Lanford \cite{Lan88} that the study of commuting pair renormalization would explain the rigidity and universality phenomenon for critical circle maps. 
This has now been justified by de Faria, de Melo, and Yampolsky \cite{dF99,dFdM2,Y99,Y01,Y02,Y03}.

The analog of Theorem \ref{thm:tower-rigidity} was proven by Yampolsky \cite{Y01} where the corresponding renormalization horseshoe was shown to be conjugate to the shift map on $\{1,2,3,\ldots,\infty\}^{\Z}$.
The term $\infty$ again encodes the degeneration coming from simple parabolic bifurcation, but, unlike sector renormalization, real-symmetry prevents any possible ambiguity in orientation.
Every element of the attractor also admits the structure of a cascade of real-symmetric $\sigma$-proper maps onto $\C$.
(For periodic combinatorics, this cascade was studied in \cite{Lim24}.)
In \cite{Y02,Y03}, the commuting pair renormalization was upgraded to cylinder renormalization, a gluing operation that is geometrically similar to sector renormalization.
Within this new framework of cylinder renormalization, Yampolsky \cite{Y02,Y03} successfully brought Lanford's Program to completion.


\subsection{Acknowledgments}

I would like to thank Dzmitry Dudko and Mikhail Lyubich for numerous discussions on neutral renormalization.
A significant part of this note was written during my visit to the Banach Center during the Simons Semester on Continued Fractions, Fractals, Ergodic theory and Dynamics in May 2026. 
This work was partially supported by the Simons Foundation grant (award no. SFI-MPS-T-Institutes-00010825) and from State Treasury funds as part of a task commissioned by the Minister of Science and Higher Education under the project ``Organization of the Simons Semesters at the Banach Center -- New Energies in 2026--2028'' (agreement no. MNiSW/2025/DAP/491).

\section{Renormalization of irrational rotations}

\subsection{Sector renormalization}

For any irrational $\theta \in \Irrat$, denote
\[
    \varepsilon(\theta) := \textnormal{sgn}(\theta) \in \{-1,+1\} \qquad \text{ and } \qquad 
    \bar{a}(\theta) := \left\lfloor \frac{1}{|\theta|} \right\rfloor \in \{2,3,4,\ldots\}.
\]
The sign $\varepsilon=\varepsilon(\theta)$ tells us whether or not $\rotate_\theta$ rotates clockwise or anticlockwise.
The number $\bar{a}=\bar{a}(\theta)$ is the unique positive integer such that
\[
    \bar{a}|\theta| < 1 < (\bar{a}+1)|\theta|.
\]

\begin{figure}    
    \centering
    \begin{tikzpicture}[scale=1]
        \filldraw[white,fill opacity=0.5,fill=green!25!white] (-3.5,0) -- (-1,0) arc (0:17:2.5cm) -- cycle;        
        \filldraw[white,fill opacity=0.5,fill=yellow!30!white] (-3.5,0) -- ({2.5*cos(-32)-3.5},{2.5*sin(-32)}) arc (-32:0:2.5cm) -- cycle;
        \draw [black,domain=0:360,smooth] plot ({2.5*cos(\x)-3.5}, {2.5*sin(\x)});
        \draw [blue,thick,domain=-32:17,-latex] plot ({1.25*cos(\x)-3.5}, {1.25*sin(\x)});
        \draw[black,line width=0.5pt] ({2.5*cos(-32)-3.5},{2.5*sin(-32)}) -- (-3.5,0) -- ({2.5*cos(17)-3.5},{2.5*sin(17)}); 
        \draw[gray!70!white,line width=0.5pt] (-3.5,0) -- (-1,0);

        \filldraw[white,fill opacity=0.5,fill=yellow!30!white] (0.5,0) -- ({2.5*cos(-15)+0.5},{2.5*sin(-15)}) arc (-15:17:2.5cm) -- cycle;   
        \filldraw[white,fill opacity=0.5,fill=green!25!white] (0.5,0) -- ({2.5*cos(-32)+0.5},{2.5*sin(-32)}) arc (-32:-15:2.5cm) -- cycle;
        \draw[black,line width=0.5pt] ({2.5*cos(17)+0.5},{2.5*sin(17)}) -- (0.5,0) -- ({2.5*cos(-32)+0.5},{2.5*sin(-32)});        
        \draw[gray!70!white,line width=0.5pt] ({2.5*cos(-15)+0.5},{2.5*sin(-15)}) -- (0.5,0);
        \draw [black,domain=-32:17] plot ({2.5*cos(\x)+0.5}, {2.5*sin(\x)});

        \filldraw (-1,0) circle (2pt);
        \filldraw ({2.5*cos(-49)-3.5},{2.5*sin(-49)}) circle (2pt);
        \filldraw ({2.5*cos(-98)-3.5},{2.5*sin(-98)}) circle (2pt);
        \filldraw ({2.5*cos(-147)-3.5},{2.5*sin(-147)}) circle (2pt);
        \filldraw ({2.5*cos(-196)-3.5},{2.5*sin(-196)}) circle (2pt);
        \filldraw ({2.5*cos(-245)-3.5},{2.5*sin(-245)}) circle (2pt);
        \filldraw ({2.5*cos(-294)-3.5},{2.5*sin(-294)}) circle (2pt);
        \filldraw ({2.5*cos(-343)-3.5},{2.5*sin(-343)}) circle (2pt);
        \filldraw ({2.5*cos(-392)-3.5},{2.5*sin(-392)}) circle (2pt);

        \node [black, font=\bfseries] at (1.75,2) {$\varepsilon =+1$, $\bar{a}= 7$};
        \node [blue, font=\bfseries] at (-1.95,-0.3) {\small $\rotate_{\theta}$};
        \draw[yellow!40!black,line width=0.5pt,-latex] (1.5,0.5).. controls (0.75,1.1) and (-0.5,-0.7) .. (-0.95,-0.7) ;
        \draw[green!50!black,line width=0.5pt,-latex] (1.5,-0.8) .. controls (0.5,-1) and (0,0.2) .. (-0.9,0.4);
        \node [green!50!black, font=\bfseries] at (0.4,-0.9) {$\rotate_{\theta}^8$};
        \node [yellow!40!black, font=\bfseries] at (0.4,0.8) {$\rotate_{\theta}^7$};
        \node [black, font=\bfseries] at (-0.67,-0.1) {\small $v_{0}$};
        \node [black, font=\bfseries] at (-1.4,-2) {\small $v_{-1}$};
        \node [black, font=\bfseries] at (-0.75,0.85) {\small $v_{-7}$};
        \node [black, font=\bfseries] at (-1,-1.45) {\small $v_{-8}$};

        \filldraw[white,fill opacity=0.5,fill=green!25!white] (-3.5,-5.5) -- (-1,-5.5) arc (0:-17:2.5cm) -- cycle;        
        \filldraw[white,fill opacity=0.5,fill=yellow!30!white] (-3.5,-5.5) -- (-1,-5.5) arc (0:32:2.5cm) -- cycle;
        \draw [black,domain=0:360,smooth] plot ({2.5*cos(\x)-3.5}, {2.5*sin(\x)-5.5});        
        \draw [blue,thick,domain=32:-17,-latex] plot ({1.25*cos(\x)-3.5}, {1.25*sin(\x)-5.5});
        \draw[black,line width=0.5pt] ({2.5*cos(32)-3.5},{2.5*sin(32)-5.5}) -- (-3.5,-5.5) -- ({2.5*cos(-17)-3.5},{2.5*sin(-17)-5.5});        
        \draw[gray!70!white,line width=0.5pt] ({2.5*cos(0)-3.5},{2.5*sin(0)-5.5}) -- (-3.5,-5.5);

        \filldraw[white,fill opacity=0.5,fill=yellow!30!white] (0.5,-5.5) -- ({2.5*cos(-17)+0.5},{2.5*sin(-17)-5.5}) arc (-17:15:2.5cm) -- cycle;        
        \filldraw[white,fill opacity=0.5,fill=green!25!white] (0.5,-5.5) -- ({2.5*cos(15)+0.5},{2.5*sin(15)-5.5}) arc (15:32:2.5cm) -- cycle;
        \draw[black,line width=0.5pt] (0.5,-5.5) -- ({2.5*cos(-17)+0.5},{2.5*sin(-17)-5.5}) arc (-17:32:2.5cm) -- cycle;        
        \draw[gray!70!white,line width=0.5pt] ({2.5*cos(15)+0.5},{2.5*sin(15)-5.5}) -- (0.5,-5.5);
        
        \filldraw (-1,-5.5) circle (2pt);
        \filldraw ({2.5*cos(49)-3.5},{2.5*sin(49)-5.5}) circle (2pt);
        \filldraw ({2.5*cos(98)-3.5},{2.5*sin(98)-5.5}) circle (2pt);
        \filldraw ({2.5*cos(147)-3.5},{2.5*sin(147)-5.5}) circle (2pt);
        \filldraw ({2.5*cos(196)-3.5},{2.5*sin(196)-5.5}) circle (2pt);
        \filldraw ({2.5*cos(245)-3.5},{2.5*sin(245)-5.5}) circle (2pt);
        \filldraw ({2.5*cos(294)-3.5},{2.5*sin(294)-5.5}) circle (2pt);
        \filldraw ({2.5*cos(343)-3.5},{2.5*sin(343)-5.5}) circle (2pt);
        \filldraw ({2.5*cos(392)-3.5},{2.5*sin(392)-5.5}) circle (2pt);

        \node [black, font=\bfseries] at (1.75,-3.5) {$\varepsilon=-1$, $\bar{a} = 7$};
        \node [blue, font=\bfseries] at (-1.95,-5.25) {$\rotate_{\theta}$};
        \draw[green!50!black,line width=0.5pt,-latex] (1.5,-4.7).. controls (0.75,-4.1) and (-0.5,-5.9) .. (-0.95,-5.9) ;
        \draw[yellow!40!black,line width=0.5pt,-latex] (1.5,-6) .. controls (0.2,-6.2) and (-0.2,-5) .. (-0.95,-4.8);
        \node [green!50!black, font=\bfseries] at (0.4,-4.45) {$\rotate_{\theta}^8$};
        \node [yellow!40!black, font=\bfseries] at (0.4,-6.2) {$\rotate_{\theta}^7$};
        \node [black, font=\bfseries] at (-0.68,-5.48) {\small $v_{0}$};
        \node [black, font=\bfseries] at (-1.45,-3.55) {\small $v_{-1}$};
        \node [black, font=\bfseries] at (-0.75,-6.4) {\small $v_{-7}$};
        \node [black, font=\bfseries] at (-1,-4) {\small $v_{-8}$};
\end{tikzpicture}
    \caption{Two examples of the first return map on the shaded sector $S_{\theta}$.}
    \label{fig:sector-rotation}
\end{figure}

Let us first fix $\theta \in \Irrat$.
For $k \in \Z$, denote 
\[
v_k = v_k(\theta) := \rotate_\theta^k(1).
\]
For any two distinct integers $k,l \in \Z$, we denote by $\triangle_{\theta}(k,l)$ the closed radial sector bounded by the radial line segment from $0$ to $v_k$, the radial line segment from $0$ to $v_l$, and the shortest circular arc in $\partial \D$ joining $v_k$ and $v_l$.
We will in particular consider the sector
\[
    S_\theta := \triangle_{\theta}(-\bar{a}-1,-\bar{a}).
\]
It contains the point $v_0 = 1$. The first return map of $\rotate_{\theta}$ back to $S_\theta$ is the piecewise linear map
\[
    \textnormal{FRM}_{\theta}(z) = \begin{cases}
        \rotate_{\theta}^{\bar{a}+1}(z) & \text{ if } z \in \triangle_{\theta}(-\bar{a}-1,-2\bar{a}-1) , \\
        \rotate_{\theta}^{\bar{a}}(z) & \text{ if } z \in \triangle_{\theta}(-2\bar{a}-1,-\bar{a}) .
    \end{cases}
\]
See Figure \ref{fig:sector-rotation} for an illustration.
The power map 
\[
    \psi_{\theta}: S_\theta \to \overline{\D}, \quad \psi_\theta(z) = z^{1/|\theta|}
\]
sends the interior of $S_\theta$ conformally onto the unit disk minus the radial slit $\gamma_\theta = \{\arg z = |\theta|^{-1} \arg v_{-\bar{a}} \}$, and it glues the two radial edges of $S_\theta$ together onto the slit.
Under $\psi_\theta$, $\textnormal{FRM}_{\theta}$ projects to a new rigid rotation $\rotate_{\gauss(\theta)} : \overline{\D} \to \overline{\D}$ for some irrational $\gauss(\theta) \in \Irrat$. The slit $\gamma_\theta$ is actually equal to $\{ \arg z = - 2\pi\, \gauss(\theta) \}$.
The map
\[
    \gauss: \Irrat \to \Irrat
\]
should be thought as a modified Gauss map.
We call $\rotate_{\gauss(\theta)}$ the \emph{sector renormalization} of $\rotate_\theta$.

\begin{lemma}
\label{lem:r-shift}
    For $\theta \in \Irrat$, $\gauss(\theta)$ is the unique irrational in $\Irrat$ such that 
    \[
    \gauss(\theta) \equiv -\frac{1}{\theta} \quad (\text{mod }1).
    \]
    More precisely, we have
    \[
        \gauss(\theta) = - \frac{1}{\theta} + \varepsilon(\theta) \,b(\theta) \quad \text{ where }
        b(\theta) = \bar{a}(\theta) + \frac{1+ \varepsilon(\theta) \, \varepsilon(\gauss(\theta))}{2}.
    \]
    The map $\gauss: \Irrat \to \Irrat$ is an infinite-to-one surjective continuous function.
\end{lemma}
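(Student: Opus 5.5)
Work in the angular coordinate: identify $\partial\D$ with $\R/\Z$ via $z = e^{2\pi i t}$. By symmetry (complex conjugation conjugates $\rotate_\theta$ to $\rotate_{-\theta}$, reverses orientation, and sends $S_\theta$ to $S_{-\theta}$), first treat $\theta>0$; then derive the case $\theta<0$ by conjugation, checking that the formula transforms correctly under $\theta\mapsto-\theta$. This requires $\gauss(-\theta)=-\gauss(\theta)$, which is consistent with $-1/\theta$ mod $1$ being odd.

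For $\theta>0$, write $\bar a=\lfloor 1/\theta\rfloor$. The sector $S_\theta$ corresponds to the arc $I=[-(\bar a+1)\theta,\,-\bar a\theta]$ taken mod $1$. Since $\bar a\theta<1<(\bar a+1)\theta$, this is the arc $[1-(\bar a+1)\theta,\;1-\bar a\theta]\ni 0$, of length $\theta$. The map $\psi_\theta$ is, in angle coordinates, $t\mapsto t/\theta$ (mod $1$), which is well defined on $I$ and maps it onto $\R/\Z$ after gluing the endpoints. The first return map is translation by $\bar a\theta-1$ on one piece and by $(\bar a+1)\theta-1$ on the other. Note that these two translation amounts differ by exactly $\theta$, the length of $I$, so after gluing the endpoints the map is a single rotation on the circle $I/\!\sim$. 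Conjugating by $t\mapsto t/\theta$, this becomes rotation by $(\bar a\theta-1)/\theta=\bar a-1/\theta \equiv -1/\theta \pmod 1$. This establishes the congruence. Uniqueness in $\Irrat$ is immediate, since $(-\tfrac12,\tfrac12)$ contains exactly one representative of each irrational class mod $1$.

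For the explicit formula, note that $-1/\theta+\bar a = -\{1/\theta\}\in(-1,0)$. If $\{1/\theta\}<1/2$, then the representative is $\gauss(\theta)=\bar a-1/\theta<0$, so $\varepsilon(\gauss(\theta))=-1$ and $b=\bar a$. Otherwise $\gauss(\theta)=\bar a+1-1/\theta>0$, so $\varepsilon(\gauss(\theta))=+1$ and $b=\bar a+1$. In both cases this matches $b=\bar a+(1+\varepsilon\varepsilon')/2$ with $\varepsilon=+1$. For $\theta<0$, apply the same computation to $-\theta$ and use oddness: $\gauss(\theta)=-\gauss(-\theta)=-1/\theta-b(-\theta)$. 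Here $b(-\theta)=b(\theta)$ because the product $\varepsilon\varepsilon'$ is unchanged, and $\varepsilon(\theta)=-1$, so the formula holds. The step that needs the most care is bookkeeping which piece is which, and confirming that the glued first return map really is a rotation rather than an interval exchange. The observation that the two translation amounts differ by the length of $I$ settles this.

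Finally, continuity and surjectivity. On each open set $\{\theta\in\Irrat : 1/(\bar a+1)<|\theta|<1/\bar a,\ \pm\theta>0\}$, the map $\gauss$ is the restriction of a Möbius map $\theta\mapsto -1/\theta+c$, with the constant switching only at $|\theta|=1/(\bar a+\tfrac12)$, which is rational. Since all the breakpoints $1/n$ and $2/(2n+1)$ are rational and excluded from $\Irrat$, $\gauss$ is continuous on $\Irrat$. For surjectivity with infinite fibers, given $\phi\in\Irrat$ and any $n\ge 2$, set $\theta=1/(n-\phi)$ if $\phi<0$, or $\theta=1/(n+1-\phi)$ if $\phi>0$. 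Then $\theta\in(0,\tfrac12)$ is irrational and $\gauss(\theta)\equiv\phi$ mod $1$, so $\gauss(\theta)=\phi$. Distinct $n$ give distinct preimages, so every fiber is infinite.
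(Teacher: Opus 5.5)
Your proof is correct and follows essentially the same route as the paper. You conjugate the two return branches by the power map to get rotation by $\bar a - 1/\theta \equiv -1/\theta \pmod 1$, pick the representative in $(-\tfrac12,\tfrac12)$ by a case split on the sign of the result, and get continuity from local constancy of the integer correction; the differences are cosmetic (reducing to $\theta>0$ by complex conjugation instead of carrying $\varepsilon(\theta)$ throughout, making explicit that the two translation amounts differ by exactly the sector width, which the paper uses implicitly when its formula comes out independent of $k$, and exhibiting explicit preimages where the paper just calls surjectivity and infinite-to-one clear).
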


The graph of $\gauss$ is illustrated in Figure \ref{fig:gauss}.

\begin{figure}
    \centering
    \includegraphics[width=0.6\linewidth]{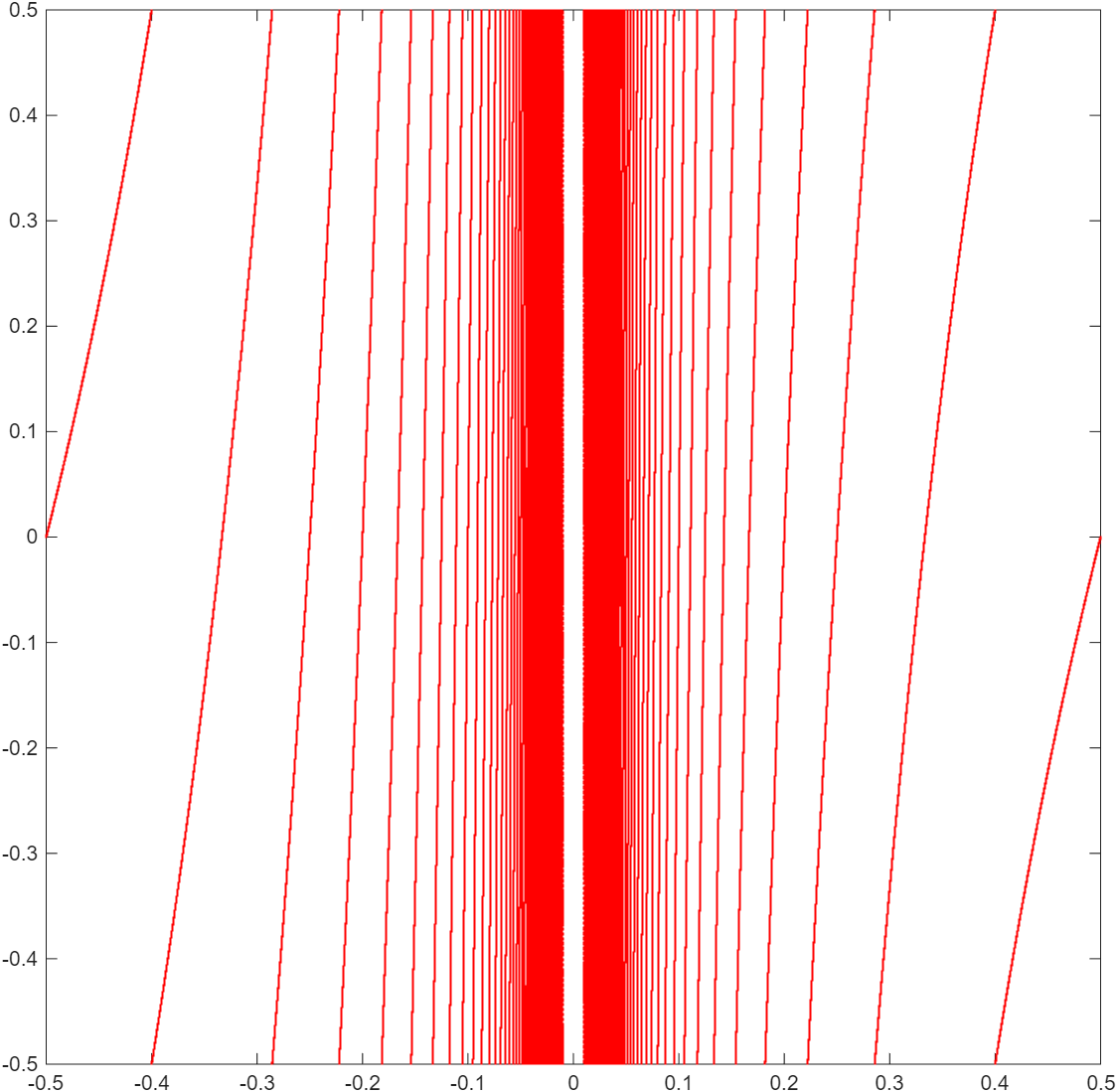}
    \caption{The graph of $\gauss$}
    \label{fig:gauss}
\end{figure}

\begin{proof}
    Denote $\varepsilon_1 = \varepsilon(\theta)$, $\varepsilon_2 = \varepsilon(\gauss(\theta))$, and $\bar{a}=\bar{a}(\theta)$. 
    The first return map back to $S_\theta$ can be written as the pair of maps $\rotate_{\bar{a}\theta-\varepsilon_1}$ and $\rotate_{(\bar{a}+1)\theta-\varepsilon_1}$ where both $\bar{a}\theta - \varepsilon_1$ and $(\bar{a}+1)\theta - \varepsilon_1$ are contained in $(-\frac{1}{2},\frac{1}{2})$.
    Therefore, for $z \in \overline{\D}$, there is some $k=k(z) \in \{\bar{a},\bar{a}+1\}$ such that
    \begin{align*}
        \rotate_{\gauss(\theta)}(z) &= \psi_\theta \circ \rotate_{k \theta-\varepsilon_1} \circ \psi_\theta^{-1}(z) = e^{2\pi i(k\theta - \varepsilon_1)/|\theta|} z = e^{2\pi i (\varepsilon_1 k - \frac{1}{\theta})} z,
    \end{align*}
    so then $\gauss(\theta) \equiv - \frac{1}{\theta}$ mod $1$.
    
    Let us write $\gauss(\theta) = -\frac{1}{\theta} + x$ where $x=x(\theta)$ is the unique integer closest to $\frac{1}{\theta}$.
    Since $\bar{a} < \frac{\varepsilon_1}{\theta} < \bar{a}+1$, then either $x = \varepsilon_1 \bar{a}$ or $x = \varepsilon_1 (\bar{a}+1)$.
    If $x = \varepsilon_1 \bar{a}$, then
    \[
        \varepsilon_1 \gauss(\theta) = -\frac{\varepsilon_1}{\theta} + \varepsilon_1x = - \frac{\varepsilon_1}{\theta} + \bar{a} < 0.
    \]
    which implies that $\varepsilon_1 \varepsilon_2 = -1$. 
    Else, if $x= \varepsilon_1 (\bar{a}+1)$, then
    \[
        \varepsilon_1 \gauss(\theta) = -\frac{\varepsilon_1}{\theta}+  \varepsilon_1x =  -\frac{\varepsilon_1}{\theta} + \bar{a} + 1 > 0
    \]
    and so $\varepsilon_1 \varepsilon_2 = +1$.
    In both cases, we do have $x = \varepsilon_1 b(\theta) $.

    The function $\gauss$ is continuous since $-\frac{1}{\theta}$ and $x(\theta)$ are continuous in $\theta \in \Irrat$. It is clear that $\gauss$ is infinite-to-one and surjective.
\end{proof}

\subsection{More notation}
\label{ss:more-notation}

Denote $\N = \{1,2,3\ldots\}$ and $\Sigma := \{-1,+1\} \times \N_{\geq 2}$. Consider the infinite sequence space
\[
    \The = \{ \seq{ (\varepsilon_n,\bar{a}_n) }_{n\geq 1} \: : \: (\varepsilon_n, \bar{a}_n) \in \Sigma \text{ for all } n\geq 1\},
\]
equipped with the infinite product topology.
For every $\sigma = \seq{ (\varepsilon_n, \bar{a}_n ) }_{n\geq 1} \in \The$, we denote
\[
b_n = b_n(\sigma) := \bar{a}_n + \frac{1 + \varepsilon_n \varepsilon_{n+1}}{2} \qquad \text{ for } n \geq 1.
\]
Associated to $\sigma$ are the pair of sequences $\{p_{[n]} = p_{[n]}(\sigma)\}_{n \geq 0}$ and $\{q_{[n]}=q_{[n]}(\sigma)\}_{n\geq 0}$ where
\[
    p_{[0]} = 0, \quad q_{[0]} = 1, \qquad p_{[1]} = \varepsilon_1, \quad q_{[1]} = b_1, 
\]
and recursively for $n\geq 2$,
\begin{equation}
\label{eqn:recurrence-relation}
    p_{[n]} = b_n p_{[n-1]} - \varepsilon_{n-1} \varepsilon_n  p_{[n-2]} \qquad \text{and} \qquad q_{[n]} = b_n q_{[n-1]} - \varepsilon_{n-1} \varepsilon_n  q_{[n-2]}.
\end{equation}

We have a well-defined function
\[
    \mathfrak{Y} : \Irrat \to \The, \quad \theta \mapsto \seq{ (\varepsilon(\gauss^{n-1}(\theta),\bar{a}(\gauss^{n-1}(\theta)) }_{n\geq 1}.
\]
Throughout the rest of this section, we will fix an irrational number $\theta \in \Irrat$. 
We denote 
\[
    \seq{ (\varepsilon_n, \bar{a}_n) }_{n\geq 1} = \mathfrak{Y}(\theta) \qquad \text{ and } \qquad b_n = b_n(\mathfrak{Y}(\theta)) \:\text{ for all } n \geq 1.
\]
For $n \geq 0$, we will denote
\[
    p_{[n]} \equiv p_{[n]}(\theta) = p_{[n]}(\mathfrak{Y}(\theta)), \quad
    q_{[n]} \equiv q_{[n]}(\theta) = q_{[n]}(\mathfrak{Y}(\theta)), \quad  
    \rotate_\theta^{[n]} \equiv \rotate_\theta^{q_{[n]}},
\]
and
\[
\theta_n = \gauss^n(\theta).
\]
By Lemma \ref{lem:r-shift}, we have
\begin{equation}
    \label{eqn:b-induction}
        \theta_{n} = - \frac{1}{\theta_{n-1}} + \varepsilon_{n} b_{n}
        \qquad \text{ for all } n \geq 1.
\end{equation}

\subsection{Iterated sector renormalizations}

For any $m \geq 0$ and $n \in \Z$, denote
\[
    S_m^1 = S_{\theta_m}, \quad \psi_{\theta_m} = \psi_m, \quad \text{and} \quad I^n_m := \{ z\in \overline{\D} \: : \: \arg z = \arg \rotate_{\theta_m}^n(1) \}.
\]
Observe that the radial slit $\gamma_{\theta_m}$ discussed previously is equal to $I^{-1}_{m+1}$ and is always disjoint from the sector $S_{m+1,1}$.
Hence, the sector $S_{m+1}^1$ can be lifted under the gluing map $\psi_m$ to a sector $S_{m}^2$ contained in $S^1_m$.
By repeating this process, we obtain the nest of sectors 
\[
S_{m}^1 \supset S_{m}^2 \supset S_{m}^3 \supset S_{m}^4 \supset \ldots
\]
where
\[
    S_{m}^n := \psi_{\theta_m}^{-1} (S_{m+1}^{n-1})
    \quad \text{ for all } m \geq 0 \text{ and } n\geq 2.
\]
For every $n \geq 1$, the angular width of $S_{0}^n$ is equal to
\[
    l_{[n-1]} := |\theta_0 \theta_1 \ldots \theta_{n-1}|.
\]
For convenience, we will also set $l_{[-1]} = 1$.

\begin{figure}
    \centering
    \begin{tikzpicture}[scale=0.97]
        \filldraw[white,fill opacity=0.5,fill=green!10!white] (4.5,0) circle (2cm);
        \filldraw[white,fill opacity=0.5,fill=red!10!white] ({4.5+2*cos(-52.5)}, {2*sin(-52.5)}) arc (-52.5:85:2cm) -- (4.5,0) -- cycle;
        \draw [green!50!black] (4.5,0) circle (2cm);
        \draw [green!50!black] ({4.5 + 2*cos(-137.5)}, {2*sin(-137.5)}) -- (4.5,0);
        \draw [black,dashed] (4.5,0) -- (6.5,0);
        \draw [red] ({4.5+2*cos(-52.5)}, {2*sin(-52.5)}) -- (4.5,0) -- ({4.5+2*cos(85)}, {2*sin(85)});
        
        \filldraw[white,fill opacity=0.5,fill=blue!10!white] (0,0) circle (2cm);
        \filldraw[white,fill opacity=0.5,fill=green!10!white] ({2*cos(-52.5)}, {2*sin(-52.5)}) arc (-52.5:85:2cm) -- (0,0) -- cycle;
        \filldraw[white,fill opacity=0.5,fill=red!10!white] ({2*cos(-20)}, {2*sin(-20)}) arc (-20:32.5:2cm) -- (0,0) -- cycle;
        \draw [blue] (0,0) circle (2cm);
        \draw [blue] ({2*cos(-137.5)}, {2*sin(-137.5)}) -- (0,0);
        \draw [black,dashed] (0,0) -- (2,0);
        \draw [green!50!black] ({2*cos(-52.5)}, {2*sin(-52.5)}) -- (0,0) -- ({2*cos(85)}, {2*sin(85)});
        \draw [red] ({2*cos(-20)}, {2*sin(-20)}) -- (0,0) -- ({2*cos(32.5)}, {2*sin(32.5)});
        
        \filldraw[white,fill opacity=0.5,fill=gray!10!white] (-4.5,0) circle (2cm);
        \filldraw[white,fill opacity=0.5,fill=blue!10!white] ({-4.5+2*cos(-52.5)}, {2*sin(-52.5)}) arc (-52.5:85:2cm) -- (-4.5,0) -- cycle;
        \filldraw[white,fill opacity=0.5,fill=green!10!white] ({-4.5+2*cos(-20)}, {2*sin(-20)}) arc (-20:32.5:2cm) -- (-4.5,0) -- cycle;
        \filldraw[white,fill opacity=0.5,fill=red!10!white] ({-4.5+2*cos(-8)}, {2*sin(-8)}) arc (-8:12.5:2cm) -- (-4.5,0) -- cycle;
        \draw [black] (-4.5,0) circle (2cm);
        \draw [black] ({-4.5 + 2*cos(222.5)}, {2*sin(222.5)}) -- (-4.5,0);
        \draw [black,dashed] (-4.5,0) -- (-2.5,0);
        \draw [blue] ({-4.5+2*cos(-52.5)}, {2*sin(-52.5)}) -- (-4.5,0) -- ({-4.5+2*cos(85)}, {2*sin(85)});
        \draw [green!50!black] ({-4.5+2*cos(-20)}, {2*sin(-20)}) -- (-4.5,0) -- ({-4.5+2*cos(32.5)}, {2*sin(32.5)});
        \draw [red] ({-4.5+2*cos(-8)}, {2*sin(-8)}) -- (-4.5,0) -- ({-4.5+2*cos(12.5)}, {2*sin(12.5)});
    
        \node [blue] at (-3.7,1.2) {$S_0^1$};
        \node [green!50!black] at (-3,0.65) {\scalebox{0.85}{$S_0^2$}};
        \node [red] at (-2.85,0.1) {\scalebox{0.7}{$S_0^3$}};
        \node [green!50!black] at (0.8,1) {$S_1^1$};
        \node [red] at (1.5,0.3) {\scalebox{0.9}{$S_1^2$}};
        \node [red] at (5.5,0.6) {$S_2^1$};
        \draw [blue!60!black,-latex] (-3,-2) .. controls (-2.5,-2.3) and (-2,-2.3) .. (-1.5,-2);
        \node [blue!60!black] at (-2.25,-2.5) {$\psi_0$};
        \draw [green!40!black,-latex] (1.5,-2) .. controls (2,-2.3) and (2.5,-2.3) .. (3,-2);
        \node [green!40!black] at (2.25,-2.5) {$\psi_1$};
    \end{tikzpicture}
    \caption{A nest of renormalization sectors}
    \label{fig:nesting}
\end{figure}

\begin{proposition}
\label{prop:ln}
    For all $n\geq 1$,
    \begin{enumerate}[label=\textnormal{(\arabic*}$_n$\textnormal{)}]
        \item $l_{[n-2]} = b_{n} l_{[n-1]} - \varepsilon_{n} \varepsilon_{n+1} l_{[n]}$, \vspace{0.03in}
        
        \item $ 1 = q_{[n]} l_{[n-1]} - \varepsilon_n \varepsilon_{n+1} q_{[n-1]} l_{[n]}$, \vspace{0.03in}
        
        \item $\varepsilon_{n+1} l_{[n]}= q_{[n]} \theta - p_{[n]}$.
    \end{enumerate}
\end{proposition}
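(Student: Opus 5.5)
We will prove the three identities in the order (1), (2), (3), each by direct computation or induction on $n$. Throughout we use the notation $l_{[n]} = |\theta_0\cdots\theta_n|$ with $l_{[-1]}=1$, and the convention $l_{[-2]}$ is never needed since (1$_n$) only involves indices $\geq n-2 \geq -1$.

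\textbf{Step 1: identity (1$_n$).} The key input is (\ref{eqn:b-induction}) in the form $\theta_{n-1}\theta_n = -1 + \varepsilon_n b_n \theta_{n-1}$. First multiply by $|\theta_0\cdots\theta_{n-2}|$; for $n=1$ this product is empty and equal to $l_{[-1]}=1$. Then write $\theta_k = \varepsilon_{k+1}|\theta_k|$, which holds since $\varepsilon_{k+1}=\varepsilon(\theta_k)$ by the definition of $\mathfrak{Y}$. With these substitutions, $\theta_{n-1}\theta_n = \varepsilon_n\varepsilon_{n+1}|\theta_{n-1}\theta_n|$ and $\varepsilon_n b_n\theta_{n-1} = b_n|\theta_{n-1}|$. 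This gives $\varepsilon_n\varepsilon_{n+1} l_{[n]} = -l_{[n-2]} + b_n l_{[n-1]}$, which is exactly (1$_n$).

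\textbf{Step 2: identity (2$_n$) by induction.} Set $D_n := q_{[n]} l_{[n-1]} - \varepsilon_n\varepsilon_{n+1} q_{[n-1]} l_{[n]}$.
\begin{itemize}
\item \emph{Base case.} $D_1 = b_1 - \varepsilon_1\varepsilon_2 l_{[1]}$, and (1$_1$) says this equals $l_{[-1]}=1$.
\item \emph{Inductive step.} Expand $q_{[n+1]}$ using the recurrence (\ref{eqn:recurrence-relation}) and expand $l_{[n-1]}$ using (1$_{n+1}$). The terms $b_{n+1} q_{[n]} l_{[n]}$ cancel, and the result is $D_{n+1} = D_n$.
\end{itemize}
This is a routine two-line telescoping calculation.

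\textbf{Step 3: identity (3$_n$) by induction.} Set $E_n := q_{[n]}\theta - p_{[n]} - \varepsilon_{n+1} l_{[n]}$, and check $E_0 = E_1 = 0$.
\begin{itemize}
\item $n=0$: $E_0 = \theta - \varepsilon_1|\theta_0| = 0$.
\item $n=1$: $E_1 = b_1\theta - \varepsilon_1 - \varepsilon_2|\theta\theta_1|$. Here $\varepsilon_2|\theta\theta_1| = \varepsilon_1\theta\theta_1 = \varepsilon_1\theta(-1/\theta + \varepsilon_1 b_1) = -\varepsilon_1 + b_1\theta$, so $E_1=0$.
\end{itemize}
Now $E_n$ satisfies the same three-term recurrence $E_n = b_n E_{n-1} - \varepsilon_{n-1}\varepsilon_n E_{n-2}$. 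For the $p,q$ part this is (\ref{eqn:recurrence-relation}). For the $l$ part, multiply (1$_{n-1}$)... more precisely, rewrite (1$_n$) as
\[
\varepsilon_{n+1} l_{[n]} = b_n\,\varepsilon_n l_{[n-1]} - \varepsilon_{n-1}\varepsilon_n\,\varepsilon_{n-1} l_{[n-2]},
\]
using $\varepsilon_k^2=1$. Hence $E_n=0$ for all $n$.

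\textbf{Main obstacle.} The only delicate point is sign bookkeeping. One must consistently use $\varepsilon_{k+1} = \mathrm{sgn}(\theta_k)$ (an index shift by one) and correctly handle the base conventions $l_{[-1]}=1$ and the empty product at $n=1$. Once the identity $\theta_k = \varepsilon_{k+1}|\theta_k|$ is written down explicitly, everything else is mechanical.
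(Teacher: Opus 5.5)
Your proof is correct and follows the same route as the paper. You derive (1$_n$) by multiplying the relation $\theta_n = -1/\theta_{n-1} + \varepsilon_n b_n$ by $\theta_{n-1}$ and then by $l_{[n-2]}$, and you get (2$_n$) and (3$_n$) by induction from the $q,p$ recurrence combined with (1$_n$), using the same base cases $n=0,1$. One small slip: in your base case it should read $D_1 = b_1 l_{[0]} - \varepsilon_1\varepsilon_2 l_{[1]}$, not $b_1 - \varepsilon_1\varepsilon_2 l_{[1]}$, and (1$_1$) is precisely the statement that this quantity equals $1$.
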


\begin{proof}
    We can rewrite (\ref{eqn:b-induction}) as
    \begin{equation}
        \varepsilon_n = b_n \theta_{n-1} - \varepsilon_{n+1} |\theta_{n-1} \theta_n|.
    \end{equation}
    Multiplying this equation by $\varepsilon_n l_{[n-2]}$ gives us (1$_n$).
    Observe that when $n=1$, the equation above gives us (2$_1$) and (3$_1$) as well.
    For $n\geq 2$, (2$_n$) follows directly from (2$_{n-1}$) and the recurrence relation (\ref{eqn:recurrence-relation}).
    Observe also that (3$_0$) holds when the left hand side is to be taken to be $\theta$.
    For $n\geq 2$, (3$_n$) follows from (1$_n$), (3$_{n-1}$), (3$_{n-2}$), and the recurrence relations (\ref{eqn:recurrence-relation}) as follows:
    \begin{align*}
        \varepsilon_{n+1} l_{[n]} 
        &= b_n \varepsilon_{n} l_{[n-1]} - \varepsilon_n l_{[n-2]} \\
        &= b_n ( q_{[n-1]} \theta - p_{[n-1]}) - \varepsilon_{n-1} \varepsilon_n (q_{[n-2]} \theta - p_{[n-2]}) \\
        &= q_{[n]} \theta - p_{[n]}. \qedhere
    \end{align*}
\end{proof}

These $q_{[n]}$'s are known as the first return times of $\theta$ due to the following proposition.

\begin{proposition}[Dynamical characterization of $q_{[n]}$'s]
\label{prop:q[n]-02}
    For all $n \geq 1$,
\begin{enumerate}
    \item $\rotate_\theta^{[n]}$ is the rigid rotation by angle $\varepsilon_{n+1} l_{[n]} = |\theta_0\ldots\theta_{n-1}|\theta_n$;
    \item the sector $S_{0}^n$ is bounded by radial segments $I^{-q_{[n]} + \varepsilon_n \varepsilon_{n+1} q_{[n-1]}}_0$ and $I^{-q_{[n]}}_0$;
    \item $q_{[n]}$ is the unique smallest positive integer that satisfies 
    \[
    |[1, \rotate_\theta^{[n]}(1)]| \leq \frac{1}{2} |[1, \rotate_\theta^{[n-1]}(1)]|
    \]
    where $|\cdot|$ refers to the normalized angular measure on the unit circle.
\end{enumerate}
\end{proposition}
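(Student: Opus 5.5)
Part (1) follows from Proposition \ref{prop:ln}(3$_n$). We have $\rotate_\theta^{[n]} = \rotate_\theta^{q_{[n]}}$, which is rotation by $q_{[n]}\theta \equiv q_{[n]}\theta - p_{[n]} = \varepsilon_{n+1} l_{[n]}$ (mod $1$). Since $l_{[n]} = |\theta_0\cdots\theta_{n-1}|\,|\theta_n|$ and $\varepsilon_{n+1}=\textnormal{sgn}(\theta_n)$, this angle equals $|\theta_0\cdots\theta_{n-1}|\theta_n$. It lies in $(-\tfrac12,\tfrac12)$, so it is the honest rotation angle.

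For (2) I would induct on $n$ through the lifting $S_0^n = \psi_0^{-1}(S_1^{n-1})$. For $n=1$, $S_0^1 = \triangle_\theta(-\bar a_1-1,-\bar a_1)$, and I must check that its sides are $I_0^{-q_{[1]}}$ and $I_0^{-q_{[1]}+\varepsilon_1\varepsilon_2 q_{[0]}}$, i.e.\ indices $-b_1$ and $-b_1+\varepsilon_1\varepsilon_2$. If $\varepsilon_1\varepsilon_2=+1$ these are $-\bar a_1-1,-\bar a_1$; if $-1$ they are $-\bar a_1,-\bar a_1-1$. Either way the set of sides matches.

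For the inductive step I would avoid tracking which iterate lands where, and instead use the angular positions directly. By (1), the point $v_{-q_{[n]}}$ sits at angle $-\varepsilon_{n+1}l_{[n]}$ and $v_{-q_{[n]}+\varepsilon_n\varepsilon_{n+1}q_{[n-1]}}$ sits at angle $-\varepsilon_{n+1}l_{[n]}+\varepsilon_n l_{[n-1]}$. These differ by $l_{[n-1]}$, which is exactly the width of $S_0^n$, and they lie on opposite sides of $1$ (one can check this using $l_{[n]}<l_{[n-1]}$ and the signs). So it suffices to show that $S_0^n$ contains the ray through $1$ and that its boundary angles relative to $1$ are $-\varepsilon_{n+1}l_{[n]}$ and $\varepsilon_n l_{[n-1]} - \varepsilon_{n+1}l_{[n]}$. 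I would prove this description of $S_0^n$ by induction via $\psi_0$. Near $1$, $\psi_0$ multiplies angles measured from $1$ by $1/|\theta|$; I need to confirm with the slit convention that $\psi_0$ fixes $1$. The induction hypothesis applied to $\theta_1$ gives the angles $-\varepsilon_{n+1}l_{[n-1]}(\theta_1)$ and so on, where $l_{[k]}(\theta_1)=l_{[k+1]}(\theta)/|\theta|$. Pulling these back multiplies them by $|\theta|$, which produces exactly the claimed angles. The hard part here is bookkeeping: verifying that $\psi_0(1)=1$, and that the sides of $S_0^n$ avoid the slit, which the paper already notes. Since a direction is determined by its angle mod $1$ and both candidate angles are within $(-\tfrac12,\tfrac12)$, the index identification follows.

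For (3), by (1) we have $|[1,\rotate_\theta^{[k]}(1)]| = l_{[k]}$ for all $k$, and $l_{[n]} = |\theta_n|\,l_{[n-1]} < \tfrac12 l_{[n-1]}$, so $q_{[n]}$ satisfies the inequality. The main obstacle is minimality: I must show that every $0<q<q_{[n]}$ has $\|q\theta\| > \tfrac12 l_{[n-1]}$. My plan is to use (2). The points $v_{-j}$ for $0<j<q_{[n]}$ miss the interior of $S_0^n$, because the sector $S_0^n$ is, by construction, a first-return domain whose return times are $q_{[n]}$ and $q_{[n]}-\varepsilon_n\varepsilon_{n+1}q_{[n-1]}$, both at least $q_{[n]}$ up to the smaller one. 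By (2) and the angle computation, the point $1$ sits in $S_0^n$ at distance $l_{[n]}$ from one side and $l_{[n-1]}-l_{[n]}$ from the other. So $\|j\theta\| \ge \min(l_{[n]},\,l_{[n-1]}-l_{[n]})$. This bound is not sufficient on its own, so I would sharpen it. When $\varepsilon_n\varepsilon_{n+1}=+1$, I would treat the nearer side index $q_{[n]}-q_{[n-1]}$ separately: its distance is $l_{[n-1]}-l_{[n]} > \tfrac12 l_{[n-1]}$. Then I would apply the induction on $n$ to handle the values $q<q_{[n]}$ with $q\le q_{[n-1]}$, reducing to the three-distance structure of orbit points in $S_0^{n-1}$. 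I expect this case analysis to be the delicate step.
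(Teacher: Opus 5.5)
\textbf{The gap is the minimality part of (3).} You never actually show $\|j\theta\|>\frac{1}{2}l_{[n-1]}$ for all $0<j<q_{[n]}$. The side of $S_0^n$ that is too close to $1$ is always $I_0^{-q_{[n]}}$, at distance $l_{[n]}<\frac{1}{2}l_{[n-1]}$. The side $I_0^{-q_{[n]}+q_{[n-1]}}$ that you single out when $\varepsilon_n\varepsilon_{n+1}=+1$ is the far side, at distance $l_{[n-1]}-l_{[n]}$, and was never a problem. What has to be excluded is a point $v_{-j}$ with $0<j<q_{[n]}$ lying just beyond $I_0^{-q_{[n]}}$, at distance in $(l_{[n]},\frac{1}{2}l_{[n-1]}]$. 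Induction on $n$ only disposes of $j\le q_{[n-1]}$. It leaves every $q_{[n-1]}<j<q_{[n]}$ untouched: for the golden mean and $n=2$ these are $j=4,5,6,7$, and your special case covers only $j=5$. The appeal to ``three-distance structure'' is not an argument.

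The paper closes this by identifying the pieces of the triangulation $\Delta_0^n$ adjacent to $S_0^n$ across $I_0^{-q_{[n]}}$. If $\varepsilon_n\varepsilon_{n+1}=+1$ there is one piece, bounded by $I_0^{-q_{[n]}}$ and $I_0^{-q_{[n-1]}}$. If $\varepsilon_n\varepsilon_{n+1}=-1$ there are two pieces, with edges $I_0^{-q_{[n]}}$, $I_0^{-2q_{[n]}}$, $I_0^{-q_{[n]}+q_{[n-1]}}$. Since the rays $I_0^{-k}$ with $k<q_{[n]}$ are edges of $\Delta_0^n$, none of them enters these pieces.

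Within your set-up there is a shorter fix. Suppose $v_{-j}$ sat at relative angle $x$ beyond $I_0^{-q_{[n]}}$ with $|x|\le\frac{1}{2}l_{[n-1]}$. Then by (1) the point $v_{-(q_{[n]}-j)}$ sits at $-(x+\varepsilon_{n+1}l_{[n]})$. This is a nonzero angle of modulus at most $\frac{1}{2}l_{[n-1]}-l_{[n]}<l_{[n-1]}-l_{[n]}$ on the far side, i.e.\ in the interior of $S_0^n$, even though $0<q_{[n]}-j<q_{[n]}$.

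Both routes rest on the fact that $v_{-j}$ avoids the interior of $S_0^n$ for $0<j<q_{[n]}$, and your justification of that is not valid as written. When $\varepsilon_n\varepsilon_{n+1}=+1$, one of the return times is $q_{[n]}-q_{[n-1]}<q_{[n]}$. Moreover, the return-time structure of $S_0^n$ is not supplied by your angle-only proof of (2). In the paper it is a consequence of this proposition, and the proof itself uses $\Delta_0^n$ with $N\ge q_{[n]}$.

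\textbf{A sign error in (2).} Your route to (2) is a legitimate alternative to the paper's bookkeeping with $\Delta_0^n$, and cleaner. You compute the boundary angles of $S_0^n$ relative to $1$ by induction through $\psi_0$, which fixes $1$ and, on the slit-free sector $S_1^{n-1}$, pulls relative angles back by the factor $|\theta|$, using $l_{[k]}(\theta_1)=l_{[k+1]}(\theta)/|\theta|$. You then name the rays using (1) and irrationality. It does yield less information, which is exactly what bites you in (3).

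However, your target angle is wrong. Since $q_{[n-1]}\theta\equiv\varepsilon_n l_{[n-1]}$, the point $v_{-q_{[n]}+\varepsilon_n\varepsilon_{n+1}q_{[n-1]}}$ sits at $-\varepsilon_{n+1}l_{[n]}+\varepsilon_{n+1}l_{[n-1]}=\varepsilon_{n+1}(l_{[n-1]}-l_{[n]})$, not at $-\varepsilon_{n+1}l_{[n]}+\varepsilon_n l_{[n-1]}$. With your formula, when $\varepsilon_n\varepsilon_{n+1}=-1$ the two angles are $-\varepsilon_{n+1}l_{[n]}$ and $-\varepsilon_{n+1}(l_{[n]}+l_{[n-1]})$. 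These do not bound a sector of width $l_{[n-1]}$ containing $1$. For $n=1$ the second one is the angle of $v_{-(\bar a_1-1)}$, which is not a side of $S_0^1$. With the corrected pair $-\varepsilon_{n+1}l_{[n]}$ and $\varepsilon_{n+1}(l_{[n-1]}-l_{[n]})$, the base case checks directly and your inductive pull-back goes through.
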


In neutral renormalization theory in holomorphic dynamics, the inequality in (3) gives uniform \emph{improvement of domain}.

We will take a dynamical approach to prove the proposition.
We will make use of a nest of dynamical triangulations $\{\Delta_m^n\}_{n \geq 0}$ of the closed unit disk for all $m \geq 0$. 
Each $\Delta_m^n$ is a collection of closed sectors in $\overline{\D}$ with vertex at $0$ such that their union is $\overline{\D}$ and that they have pairwise disjoint interiors.
By a nest, we mean that for every $n$, every sector in $\Delta_m^n$ is a union of finitely many sectors in $\Delta_m^{n+1}$. Here is how these triangulations are constructed.

The level $0$ triangulation $\Delta_m^0$ is defined by the union of the two closed sectors formed by cutting $\overline{\D}$ along $I^0_m$ and $I^{-1}_m$. 
We will denote by $A_{m,0}$ the sector in $\Delta^0_{m}$ with angular width $1-|\theta_m|$ and by $A_{m,1}$ the sector in $\Delta_{m,0}$ with angular width $|\theta_m|$.

For all $m \geq 0$ and $n \geq 1$, the sector $S_{m}^n$ is the union of two triangles $A^n_{m,0}$ and $A^n_{m,1}$ which are the lifts of $A_{m+n,0}$ and $A_{m+n,1}$ under the map $\psi_{m+n-1} \circ \ldots \circ \psi_{m}$.
The angular widths of $A^n_{m,0}$ and $A^n_{m,1}$ are $l_{[n-1]}(\theta_m) - l_{[n]}(\theta_m)$ and $l_{[n]}(\theta_m)$ respectively.

For $m \geq 0$ and $j \in \{0,1\}$, the first time $k \geq 1$ for which $\rotate_{\theta_m}^{-k}(A^1_{m,j})$ intersects the interior of $S_m^1$ is $k=b_m + j \varepsilon_m$. 
In fact, $\rotate_{\theta_m}^{-b_m - j \varepsilon_m}(A^1_{m,j})$ is contained in $S_m^1$.
In general, for $n \geq 1$, 
the level $n$ triangulation $\Delta_m^n$ consists of sectors with angular widths $l_{[n]}(\theta_m)$ and $l_{[n-1]}(\theta_m)-l_{[n]}(\theta_m)$. 
It is defined inductively in increasing $n$ as follows.
Suppose $\Delta_m^{n-1}$ has been defined for all $m \geq 0$.
Pull back $\Delta_{m+1}^{n-1}$ under $\psi_m$ to obtain a triangulation $\breve{\Delta}_m^n$ of $S_{m}^1$.
For every sector $B \in \breve{\Delta}_m^n$, denote $j_B \in \{0,1\}$ such that $B$ is contained in $A^{1}_{m,j_B}$.
Then,
\[
    \Delta_{m,n} := \left\{ \rotate_{\theta_m}^{-k} B\right\}_{B \in \in \breve{\Delta}_m^n, \; k = 0,1,\ldots, b_m + j_B \varepsilon_m-1}.
\]

\begin{proof}
    Item (1) follows directly from Proposition \ref{prop:ln} (3$_n$).
    Let us prove (2).
    
    Observe that the triangulation $\Delta_0^n$ is obtained out of cutting $\overline{\D}$ along the radial arcs $I^0_0, I^{-1}_0, \ldots I^{-N+1}_0$ where $N=N(n)$ is the first positive integer such that $I^{-N}_0$ intersects the interior of the sector $S_0^n$. 
    It is clear that the intersection of $A_{0,0}^n$ and $A_{0,1}^n$ is $I^0_0$.
    For $j \in \{0,1\}$, we just need to find the value of $k_j \in \{ 1,2,\ldots,N-1\}$ such that $A_{0,j}^n$ is bounded by $I^0_0$ and $I^{-k_j}_0$.
    Since $A_{0,1}^n$ has angular width equal to $l_{[n]}=l_{[n]}(\theta)$, then by (1), $k_1$ has to be equal to $q_{[n]}$.
    By (1) again, $\rotate_\theta^{ \varepsilon_n \varepsilon_{n+1} q_{[n-1]}}$ is the rotation by angle
    $\varepsilon_{n+1} l_{[n-1]}$ which has the same sign as $\rotate_\theta^{[n]}$.
    As such, $\rotate_\theta^{q_{[n]} - \varepsilon_n \varepsilon_{n+1} q_{[n-1]}}$ rotates by $l_{[n-1]} - l_{[n]}$ in the direction opposite to $\rotate_\theta^{[n]}$. 
    This implies that $k_0 = q_{[n]} - \varepsilon_n \varepsilon_{n+1} q_{[n-1]}$.

    The inequality in (3) follows from (1) and the fact that $|\theta_n|< \frac{1}{2}$.
    We will use (2) and the triangulation $\Delta_0^n$ to prove the rest of (3). 
    From the previous paragraph, we have $N \geq q_{[n]}$. 
    Let $A_{0,2}^n \in \Delta_0^n$ be the sector next to $A_{0,1}^n$ that is not equal to $A_{0,0}^n$, and let $A_{0,3}^n \in \Delta_0^n$ be the sector next to $A_{0,2}^n$ that is not equal to $A_{0,1}^n$.
    The argument is split into two cases.
    \begin{itemize}
        \item Suppose $\varepsilon_n \varepsilon_{n+1} = +1$. 
        Then, $A_{0,2}^n$ is bounded by $I^{-q_{[n]}}_0$ and $I^{-q_{[n-1]}}_0$ and its angular length is $l_{[n-1]}-l_{[n]}$.
        For any integer $k$ with $1 \leq k < q_{[n]}$, the radial segment $I^{-k}_0$ is disjoint from the interior of $S_0^n \cup A_{0,2}^n$ which implies that the angular distance between $I^{-k}_0$ and $I^0_0$ is bounded from below by $l_{[n-1]}-l_{[n]} > \frac{1}{2} l_{[n]}$. 
        \item Suppose $\varepsilon_n \varepsilon_{n+1} = -1$.
        Then, $A_{0,3}^n$ is bounded by $I^{-q_{[n]}}_0$ and $I_0^{-2q_{[n]}}$ and $A_{0,3}^n$ is bounded by $I_0^{-q_{[n]}+q_{[n-1]}}$.
        In this case, for any integer $k$ with $1 \leq k < q_{[n]}$, the radial segment $I_0^{-k}$ is disjoint from the interior of $S_0^n \cup A_{0,2}^n \cup A_{0,3}^n$ which implies that the angular distance between $I_0^{-k}$ and $I_0^0$ is again at least $l_{[n-1]}-l_{[n]} > \frac{1}{2} l_{[n]}$. 
    \end{itemize}
    In both cases, we have proven the optimality of $q_{[n]}$ described in item (3).
\end{proof}

From the proposition above, for any $n \geq 1$, the first return time of $\rotate_\theta$ at any point in the interior of $S_0^n$ is either $q_{[n]}$ or $q_{[n]} - \varepsilon_n \varepsilon_{n+1} q_{[n-1]}$. 
In other words, $\rotate_\theta^{[n]}|_{S_0^n}$ is the $n$\textsuperscript{th} pre-renormalization of $\rotate_\theta$:
\[
    \rotate_{\theta_n} = \rotate_\theta^{[n]}|_{S_0^n} \Big/ \rotate_\theta^{[n-1]}.
\]
The gluing map $\psi_{n-1} \circ \ldots \circ \psi_0$ projects the map $\rotate_\theta^{[n]}|_{S_0^n}$, modulo $\rotate_\theta^{[n-1]}$, to the rotation $\rotate_{\theta_n}$.

\section{Diophantine approximation}
\label{sec:continued-fraction}

In this section, we explore the arithmetic properties of irrationals relative to the modified Gauss map $\gauss: \Irrat \to \Irrat$ defined in the previous section.
Most of the results in this section are elementary and similar to those in the study of regular continued fractions. (E.g. compare with \cite{Khin}.)
Experts in Diophantine approximation should be familiar with most of the content in this section.
We provide details for the sake of completeness.

\subsection{The convergents}
Given a finite sequence of non-zero integers $x_1,x_2,\ldots,x_n$, we will denote
\[
    [ x_1,x_2,\ldots, x_n ]_-  := 1/(x_1 - 1/(x_2 - \ldots - 1/(x_{n-1} - 1/x_n) \ldots )).
\]

\begin{example}
    The golden mean irrational 
    \[
    \theta_{\textnormal{gm}} = \frac{3 - \sqrt{5}}{2} = 0.381966\ldots
    \]
    is a fixed point of the map $x \mapsto \frac{1}{3-x}$ and so it can be written as
    \[
        \theta_{\textnormal{gm}} = [ 3,3,3,3,3,\ldots ]_- .
    \]
\end{example}

\begin{lemma}
\label{lem:p[n]-q[n]}
    Consider $\sigma  = \seq{ (\varepsilon_n, \bar{a}_n) }_{n\geq 1} \in \The$ and the associated sequences of integers $b_n = b_n(\sigma)$, $p_{[n]} = p_{[n]}(\sigma)$ and $q_{[n]} = q_{[n]}(\sigma)$ described in \S\ref{ss:more-notation}. For all $n \geq 1$, we have
\begin{enumerate}[label=\textnormal{(\arabic*}$_n$\textnormal{)}]
    \item $p_{[n]}q_{[n-1]} - p_{[n-1]} q_{[n]} = \varepsilon_n$,
    \item $\displaystyle
        \frac{p_{[n]}}{q_{[n]}} = \sum_{k=1}^n \frac{\varepsilon_k}{ q_{[k-1]} q_{[k]}},
        $
    \item $\displaystyle 
    [ \varepsilon_1 b_1, \varepsilon_2 b_2, \ldots, \varepsilon_n b_n ]_-  = \frac{p_{[n]}}{q_{[n]}}$,
    \item $\displaystyle -\frac{1}{2} \leq \frac{p_{[n]}}{q_{[n]}} \leq \frac{1}{2}$,
    \item $q_{[n-1]} < (1-\theta_{\textnormal{gm}}) q_{[n]}$.
\end{enumerate}
\end{lemma}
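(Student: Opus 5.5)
We prove the five items essentially in the order stated, by induction on $n$, using only the recurrence (\ref{eqn:recurrence-relation}) and the initial values $p_{[0]}=0$, $q_{[0]}=1$, $p_{[1]}=\varepsilon_1$, $q_{[1]}=b_1$.

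For (1$_n$), the case $n=1$ is $\varepsilon_1\cdot 1 - 0\cdot b_1=\varepsilon_1$. For $n\geq 2$, substitute the recurrence to get
\[
p_{[n]}q_{[n-1]}-p_{[n-1]}q_{[n]} = \varepsilon_{n-1}\varepsilon_n\,\bigl(p_{[n-1]}q_{[n-2]}-p_{[n-2]}q_{[n-1]}\bigr)=\varepsilon_{n-1}\varepsilon_n\varepsilon_{n-1}=\varepsilon_n .
\]
This presupposes that the $q_{[k]}$ are positive, which comes out of (5) (see below); we therefore run the induction for (1) and (5) simultaneously. Dividing (1$_k$) by $q_{[k-1]}q_{[k]}$ gives
\[
\frac{p_{[k]}}{q_{[k]}}-\frac{p_{[k-1]}}{q_{[k-1]}}=\frac{\varepsilon_k}{q_{[k-1]}q_{[k]}},
\]
and telescoping from $p_{[0]}/q_{[0]}=0$ yields (2$_n$). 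Coprimality of $p_{[n]},q_{[n]}$ also follows from (1$_n$).

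For (3), we use the standard Möbius-matrix argument. Write $T_x(y)=1/(x-y)$, so that
\[
[x_1,\ldots,x_n]_- = T_{x_1}\circ\cdots\circ T_{x_n}(0).
\]
The matrix of $T_x$ is $\begin{pmatrix}0&1\\-1&x\end{pmatrix}$. We show by induction that the product $M_n$ for $x_k=\varepsilon_k b_k$ has second column $(p_{[n]},q_{[n]})^T$ up to a sign $\pm$, and first column proportional to $(p_{[n-1]},q_{[n-1]})$. The recurrence coefficient $-\varepsilon_{n-1}\varepsilon_n$, rather than $-1$, arises because the $\pm$ signs are absorbed by $\varepsilon$'s: multiplying $x_n=\varepsilon_nb_n$ by $\varepsilon_n$ changes the relation to $q_{[n]}=b_nq_{[n-1]}-\varepsilon_{n-1}\varepsilon_n q_{[n-2]}$. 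The cleanest way to organize this is to carry the normalized claim that $M_n=\begin{pmatrix} * & \varepsilon_n p_{[n]}\\ * & \varepsilon_n q_{[n]}\end{pmatrix}$ up to an overall global sign, and check the induction step once. Alternatively, we can induct on the tail via the shift: write $[x_1,\ldots,x_n]_-=1/(x_1-[x_2,\ldots,x_n]_-)$ and apply the inductive hypothesis to the shifted sequence. We expect the sign bookkeeping here to be the main (though routine) obstacle.

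For (5), we prove the stronger statement $q_{[n]}\geq q_{[n-1]}+1$ and $q_{[n-1]}/q_{[n]}<1-\theta_{\mathrm{gm}}$ simultaneously by induction. Set $r_n=q_{[n-1]}/q_{[n]}$. Then
\[
r_n^{-1}=b_n-\varepsilon_{n-1}\varepsilon_n r_{n-1}.
\]
If $\varepsilon_{n-1}\varepsilon_n=-1$, then $r_n^{-1}>b_n\geq 2$. If $\varepsilon_{n-1}\varepsilon_n=+1$, note that $b_{n-1}\geq \bar a_{n-1}+1\geq 3$ holds at the *previous* step only when $\varepsilon_{n-1}\varepsilon_n=+1$; what we actually need is $b_n\geq 2$ together with $r_{n-1}<1-\theta_{\mathrm{gm}}$. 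The latter gives $r_n^{-1}>2-(1-\theta_{\mathrm{gm}})=1+\theta_{\mathrm{gm}}$. Since $\theta_{\mathrm{gm}}^2-3\theta_{\mathrm{gm}}+1=0$, we have $1/(1+\theta_{\mathrm{gm}})=1-\theta_{\mathrm{gm}}$, so $r_n<1-\theta_{\mathrm{gm}}$ as required. The base case is $r_1=1/b_1\leq 1/2<1-\theta_{\mathrm{gm}}$.

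For (4), we use (3) and induct on the tail, proving the stronger claim that every finite $[\varepsilon_kb_k,\ldots,\varepsilon_nb_n]_-$ lies in $[-\tfrac12,\tfrac12]$, and moreover that its sign is $\varepsilon_k$ with absolute value at most $1/(b_k-1)$ when $\varepsilon_k\varepsilon_{k+1}=+1$. Then $y=[\ldots]_-$ of the tail satisfies $|y|\le\tfrac12$, and
\[
|x_k-y|\ge b_k-\tfrac12\ge \tfrac32 .
\]
When $b_k=2$ we have $\varepsilon_k\varepsilon_{k+1}=-1$, so $y$ has sign opposite to $x_k$ and $|x_k-y|\ge 2$. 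Either way $|1/(x_k-y)|\le 2/3$, which needs the refinement: in the case $b_k\ge 3$ we get $|x_k-y|\ge 5/2$, and in the case $b_k=2$ we get $|x_k-y|\geq 2$, so $|1/(x_k-y)|\le\tfrac12$.
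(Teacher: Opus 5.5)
\textbf{Gap in (5).} Your items (1)--(4) are essentially fine. In (3), the second column of $A_1\cdots A_n$ with $A_k=\begin{pmatrix}0&1\\-1&\varepsilon_kb_k\end{pmatrix}$ is exactly $\varepsilon_1\cdots\varepsilon_n\,(p_{[n]},q_{[n]})^{T}$, which settles the sign bookkeeping. Also, (1) is a purely algebraic identity that needs no positivity; positivity is only needed to divide in (2). The genuine gap is in (5). In the case $\varepsilon_{n-1}\varepsilon_n=+1$ you deduce $r_n^{-1}>2-(1-\theta_{\textnormal{gm}})=1+\theta_{\textnormal{gm}}$ and then assert $1/(1+\theta_{\textnormal{gm}})=1-\theta_{\textnormal{gm}}$. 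This is false. Using $\theta_{\textnormal{gm}}^2=3\theta_{\textnormal{gm}}-1$ one gets $(1+\theta_{\textnormal{gm}})(1-\theta_{\textnormal{gm}})=2-3\theta_{\textnormal{gm}}\approx 0.854$, so $1/(1+\theta_{\textnormal{gm}})\approx 0.724>0.618\approx 1-\theta_{\textnormal{gm}}$. The correct identities are $1/(2-\theta_{\textnormal{gm}})=1-\theta_{\textnormal{gm}}$ and $1/(3-\theta_{\textnormal{gm}})=\theta_{\textnormal{gm}}$. This is not a fixable slip, because no uniform hypothesis $0<r_{n-1}<c$ with $c<1$ can be propagated using only $b_n\geq 2$: the step gives $r_n<1/(2-c)$, and $1/(2-c)\le c$ forces $c=1$. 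Your argument does give positivity and $q_{[n]}>q_{[n-1]}$, but not the constant $1-\theta_{\textnormal{gm}}$.

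\textbf{The missing idea.} It is the observation you wrote down and then set aside. When $\varepsilon_{n-1}\varepsilon_n=+1$, which is exactly the case where $r_{n-1}$ enters with a minus sign, one has $b_{n-1}=\bar a_{n-1}+1\geq 3$, so $r_{n-1}$ itself is small. You must therefore carry a two-case (signed) inductive hypothesis:
\[
r_k<\theta_{\textnormal{gm}} \text{ if } \varepsilon_k\varepsilon_{k+1}=+1, \qquad r_k<1-\theta_{\textnormal{gm}} \text{ if } \varepsilon_k\varepsilon_{k+1}=-1.
\]
Equivalently, $\theta_{\textnormal{gm}}-1<\varepsilon_k\varepsilon_{k+1}r_k<\theta_{\textnormal{gm}}$, which is the paper's $(\spadesuit_k)$. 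With it, $r_n^{-1}=b_n-\varepsilon_{n-1}\varepsilon_n r_{n-1}>b_n-\theta_{\textnormal{gm}}$. Then split according to $\varepsilon_n\varepsilon_{n+1}$, not only $\varepsilon_{n-1}\varepsilon_n$, since this sign decides whether $b_n\geq 3$ or merely $b_n\geq 2$. This gives $r_n<1/(3-\theta_{\textnormal{gm}})=\theta_{\textnormal{gm}}$ or $r_n<1/(2-\theta_{\textnormal{gm}})=1-\theta_{\textnormal{gm}}$ respectively. The base case is $r_1=1/b_1$, which is at most $\tfrac13$ or $\tfrac12$ in the two cases.
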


\begin{proof}
    For $n=1$, (1$_{1}$) holds since $p_{[0]} = 0$, $q_{[0]}=1$, and $p_{[1]} = \varepsilon_1$.
    In general, the recurrence relations (\ref{eqn:recurrence-relation}) and (1$_{n-1}$) imply (1$_{n}$) as shown below.
    \begin{align*}
        & p_{[n]}q_{[n-1]} - p_{[n-1]} q_{[n]} \\
        = & (b_n p_{[n-1]}- \varepsilon_n \varepsilon_{n-1} p_{[n-2]}) q_{[n-1]} - p_{[n-1]} (b_n q_{[n-1]}- \varepsilon_n \varepsilon_{n-1} q_{[n-2]}) \\
        = & \varepsilon_n \varepsilon_{n-1} (p_{[n-1]}q_{[n-2]} - p_{[n-2]} q_{[n-1]}) \\
        = & \varepsilon_n \varepsilon_{n-1} \varepsilon_{n-1} = \varepsilon_n.
    \end{align*}
    
    Let us prove (2$_n$). The base case (2$_n$) is clear. For $n \geq 2$, (2$_n$) follows from (4$_{n-1}$) and (1$_n$) since
    \[
        \frac{p_{[n]}}{q_{[n]}} - \frac{p_{[n-1]}}{q_{[n-1]}} = \frac{p_{[n]} q_{[n-1]} - p_{[n]}q_{[n]} }{q_{[n-1]} q_{[n]}} = \frac{\varepsilon_n}{q_{[n-1]} q_{[n]}} .
    \]

    By elementary calculation, we have (3$_{1}$) and (3$_{2}$):
    \[
        [\varepsilon_1 b_1]_-  = \frac{\varepsilon_1}{b_1} = \frac{p_{[1]}}{q_{[1]}}, \qquad
        [\varepsilon_1 b_1, \varepsilon_2 b_2]_-  = \frac{\varepsilon_1 b_2}{b_2 b_1 - \varepsilon_2 \varepsilon_1} =
        \frac{p_{[2]}}{q_{[2]}}.
    \]
    Suppose we have (3$_n$) for all $n \in \{1,\ldots,k\}$ for some integer $k \geq 2$.
    Let $x = b_{k} - \frac{\varepsilon_{k} \varepsilon_{k+1}}{b_{k+1}}$. Then, by the recurrence relations,
    \begin{align*}
        [ \varepsilon_1 b_1, \ldots, \varepsilon_{k-1} b_{k-1}, \varepsilon_k b_k, \varepsilon_{k+1} b_{k+1}]_- 
        = & [ \varepsilon_1 b_1, \ldots, \varepsilon_{k-1} b_{k-1}, \varepsilon_k x ]_-  \\
        = & \frac{x p_{[k-1]}- \varepsilon_{k-1} \varepsilon_k p_{[k-2]}}{x q_{[k-1]}- \varepsilon_{k-1} \varepsilon_k q_{[k-2]}} \\
        = & \frac{p_{[k]} - \varepsilon_k \varepsilon_{k+1} p_{[k-1]}/b_{k+1}}{q_{[k]} - \varepsilon_k \varepsilon_{k+1} q_{[k-1]}/b_{k+1}} = \frac{p_{[k+1]}}{q_{[k+1]}}.
    \end{align*}
    Hence, (3$_n$) holds for all $n$.

    Next, we will use (3$_n$) to prove (4$_n$). 
    The base case (4$_1$) is clear. 
    Suppose (4$_{n-1}$) holds for some $n\geq 2$.
    In particular, $y := \varepsilon_2 [ \varepsilon_2 b_2, \ldots, \varepsilon_n b_n ]_- $ is contained in the interval $[0, \frac{1}{2}]$.
    Then, we can write
    \[
        \left| \frac{p_{[n]}}{q_{[n]}} \right| = \left| \frac{1}{\varepsilon_1 b_1 - \varepsilon_2 y} \right| = \frac{1}{b_1 - \varepsilon_1 \varepsilon_2 y} = \begin{cases}
            \frac{1}{\bar{a}_1 +1 - y} & \text{ if } \varepsilon_1 \varepsilon_2 = +1, \\
            \frac{1}{\bar{a}_1 + y} & \text{ if } \varepsilon_1 \varepsilon_2 = -1.
        \end{cases}
    \]
    Since $\bar{a}_1 \geq 2$ and $0 \leq y \leq 2$, the equation above gives us (4$_n$).

    To prove (5$_n$), we will show that for all $n \geq 1$, we have
    \begin{equation}
    \label{eqn:theta-gm-bound}
        \theta_{\textnormal{gm}} - 1 < \varepsilon_{n} \varepsilon_{n+1} \frac{q_{[n-1]}}{q_{[n]}} < \theta_{\textnormal{gm}}.
        \tag{$\spadesuit_n$}
    \end{equation}
    The base case ($\spadesuit_1$) follows from $\frac{1}{3} < \theta_{\textnormal{gm}}<\frac{1}{2} < 1- \theta_{\textnormal{gm}}$ and the following observation:
    \[
        \varepsilon_1 \varepsilon_2 \frac{q_{[0]}}{q_{[1]}} = \frac{\varepsilon_1 \varepsilon_2}{b_1} =
        \begin{cases}
            \frac{1}{\bar{a}_1 + 1} \in (0,\frac{1}{3}] & \text{ if } \varepsilon_1 \varepsilon_2 = +1,\\
            -\frac{1}{\bar{a}_1} \in [-\frac{1}{2},0) & \text{ if } \varepsilon_1 \varepsilon_2 = -1.
        \end{cases}
    \]
    In general, for $n \geq 2$, the recurrence relation \ref{eqn:recurrence-relation} gives us:
    \begin{align*}
        \varepsilon_{n} \varepsilon_{n+1} \frac{q_{[n-1]}}{q_{[n]}}
        = \frac{\varepsilon_{n} \varepsilon_{n+1}}{b_n - \varepsilon_n \varepsilon_{n-1} \frac{q_{[n-2]}}{q_{{n-1}}}} 
        = \begin{cases}
            \cfrac{1}{\bar{a}_n + 1- \varepsilon_n \varepsilon_{n-1} \frac{q_{[n-2]}}{q_{{n-1}}}} & \text{ if } \varepsilon_n \varepsilon_{n+1} = +1,\\
            -\cfrac{1}{\bar{a}_n - \varepsilon_n \varepsilon_{n-1} \frac{q_{[n-2]}}{q_{{n-1}}}} & \text{ if } \varepsilon_1 \varepsilon_{n+1} = -1.
        \end{cases}
    \end{align*}
    Then, ($\spadesuit_n$) follows from ($\spadesuit_{n-1}$):
    \begin{align*}
        \varepsilon_{n} \varepsilon_{n+1} \frac{q_{[n-1]}}{q_{[n]}}
        &\in \begin{cases}
            \left(0, \frac{1}{3 - \theta_{\textnormal{gm}}} \right) \enspace = (0, \theta_{\textnormal{gm}}) & \text{ if } \varepsilon_n \varepsilon_{n+1} = +1,\\
            \left( - \frac{1}{2 - \theta_{\textnormal{gm}}}, 0 \right) = (\theta_{\textnormal{gm}}-1,0) & \text{ if } \varepsilon_1 \varepsilon_{n+1} = -1.
        \end{cases} \qedhere
    \end{align*}
\end{proof}

\begin{proposition}
    Consider $\sigma  = \seq{ (\varepsilon_n, \bar{a}_n ) }_{n\geq 1} \in \The$ and the associated sequences $b_n = b_n(\sigma)$, $p_{[n]} = p_{[n]}(\sigma)$, and $q_{[n]} = q_{[n]}(\sigma)$. The limit $\mathfrak{X}(\sigma) = \displaystyle \lim_{n\to \infty} \frac{p_{[n]}}{q_{[n]}}$ converges to an irrational in $\Irrat$ equal to
    \[
        \mathfrak{X}(\sigma) = \sum_{k=1}^\infty \frac{\varepsilon_k}{q_{[k-1]}q_{[k]}}
    \]
    Moreover, for every $n \geq 1$,
    \[
        \frac{\theta_{\textnormal{gm}}}{q_{[n]} q_{[n+1]}} \leq \left| \mathfrak{X}(\sigma) - \frac{p_{[n]}}{q_{[n]}} \right| \leq \frac{2-\theta_{\textnormal{gm}}}{q_{[n]} q_{[n+1]}}.
    \]
\end{proposition}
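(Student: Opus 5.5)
The plan is to derive everything from Lemma \ref{lem:p[n]-q[n]}. Convergence comes from the series (2$_n$) together with the geometric growth of $q_{[n]}$. The two-sided error bound then comes from an exact "tail" formula for $\mathfrak{X}(\sigma)-p_{[n]}/q_{[n]}$, estimated with the inequality ($\spadesuit_n$) proved there. Irrationality follows from the lower bound.

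\emph{Step 1: convergence.} By (5$_n$), $q_{[n]} > (1-\theta_{\textnormal{gm}})^{-1} q_{[n-1]}$, so $q_{[n]}\geq (1-\theta_{\textnormal{gm}})^{-n}$ grows geometrically. Hence the series $\sum_k \varepsilon_k/(q_{[k-1]}q_{[k]})$ converges absolutely. By (2$_n$) its partial sums are exactly $p_{[n]}/q_{[n]}$, so the limit $\mathfrak{X}(\sigma)$ exists and equals the series. By (4$_n$) it lies in $[-\frac12,\frac12]$. The same applies to every shift $\shift^{m}\sigma$, so all the quantities $\mathfrak{X}(\shift^m\sigma)$ are defined.

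\emph{Step 2: tail formula.} Set $r:=\varepsilon_{n+2}\,\mathfrak{X}(\shift^{n+1}\sigma)\in[0,\frac12]$; the sign is nonnegative by the same sign argument as in the proof of (4$_n$), passed to the limit. Put $y := b_{n+1}-\varepsilon_{n+1}\varepsilon_{n+2} r$. Passing to the limit in the finite identity (3$_N$) for $N\to\infty$, and using the Möbius-transformation form of the recurrence (\ref{eqn:recurrence-relation}) exactly as in the inductive step of (3$_n$), one gets
\[
\mathfrak{X}(\sigma) = \frac{y\,p_{[n]}-\varepsilon_n\varepsilon_{n+1}p_{[n-1]}}{y\,q_{[n]}-\varepsilon_n\varepsilon_{n+1}q_{[n-1]}}.
\]
Subtracting $p_{[n]}/q_{[n]}$ and using (1$_n$) gives
\[
\mathfrak{X}(\sigma)-\frac{p_{[n]}}{q_{[n]}} = \frac{\varepsilon_{n+1}}{q_{[n]}\bigl(q_{[n+1]}-\varepsilon_{n+1}\varepsilon_{n+2}\, r\, q_{[n]}\bigr)} = \frac{\varepsilon_{n+1}}{q_{[n]}q_{[n+1]}}\cdot\frac{1}{1-s},
\]
where $s := r\,\varepsilon_{n+1}\varepsilon_{n+2}\, q_{[n]}/q_{[n+1]}$.

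\emph{Step 3: estimate.} By ($\spadesuit_{n+1}$) and $r\in[0,\frac12]$, we have $s\in\bigl(-\frac{1-\theta_{\textnormal{gm}}}{2},\frac{\theta_{\textnormal{gm}}}{2}\bigr)$. Therefore
\[
\frac{1}{1-s}\in\Bigl(\frac{2}{3-\theta_{\textnormal{gm}}},\frac{2}{2-\theta_{\textnormal{gm}}}\Bigr).
\]
Since $\theta_{\textnormal{gm}}=1/(3-\theta_{\textnormal{gm}})$, the left endpoint equals $2\theta_{\textnormal{gm}}\geq\theta_{\textnormal{gm}}$. Using $\theta_{\textnormal{gm}}^2=3\theta_{\textnormal{gm}}-1$, we get $(2-\theta_{\textnormal{gm}})^2=3-\theta_{\textnormal{gm}}\geq 2$, so the right endpoint is at most $2-\theta_{\textnormal{gm}}$. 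This gives both inequalities.

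\emph{Step 4: irrationality.} Suppose $\mathfrak{X}(\sigma)=P/Q$. Then $Q(q_{[n]}P/Q - p_{[n]})$ is an integer. By Step 3 it is nonzero, by the lower bound, and it has absolute value at most $Q(2-\theta_{\textnormal{gm}})/q_{[n+1]}\to 0$. This is a contradiction. In particular $\mathfrak{X}(\sigma)\neq\pm\frac12$, so $\mathfrak{X}(\sigma)\in\Irrat$.

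The main obstacle is Step 2. One must justify the infinite-tail identity rigorously: do the finite computation with $y_N$ built from $[\varepsilon_{n+2}b_{n+2},\dots,\varepsilon_N b_N]_-$ and let $N\to\infty$ using Step 1 applied to the shift. One must also pin down the sign $r\geq 0$.
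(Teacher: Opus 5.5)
Your proof is correct, and it follows a different route from the paper's for the error estimate.

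The paper never uses the complete-quotient identity. It sets $x_k=\varepsilon_k/(q_{[k-1]}q_{[k]})$ and notes from Lemma \ref{lem:p[n]-q[n]}(5) that $|x_{k+1}/x_k|=q_{[k-1]}/q_{[k+1]}<(1-\theta_{\textnormal{gm}})^2=\theta_{\textnormal{gm}}$. It then bounds the tail $\sum_{k>n}x_k$ of the series (2$_n$) geometrically. The upper bound is $|x_{n+1}|/(1-\theta_{\textnormal{gm}})$. The lower bound is $|x_{n+1}|$ minus that same bound applied to $\sum_{k>n+1}x_k$. That argument needs only the series representation, so it avoids the passage to the limit in your Step 2.

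Your route writes the error exactly as $\frac{\varepsilon_{n+1}}{q_{[n]}q_{[n+1]}}\cdot\frac{1}{1-s}$ and controls $s$ with $(\spadesuit_{n+1})$. Its cost is justifying the infinite-tail M\"obius identity. That justification goes through: $y_N\to y$ by Step 1 applied to $\shift^{n+1}\sigma$, and the denominator $y\,q_{[n]}-\varepsilon_n\varepsilon_{n+1}q_{[n-1]}$ stays bounded away from zero since $y\geq 3/2>1>q_{[n-1]}/q_{[n]}$.

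In exchange, your route gives three things the paper's does not:
\begin{itemize}
\item the sign of the error, namely $\varepsilon_{n+1}$, consistent with Proposition \ref{prop:ln}(3$_n$);
\item sharper constants, $2\theta_{\textnormal{gm}}$ and $2/(2-\theta_{\textnormal{gm}})$ in place of $\theta_{\textnormal{gm}}$ and $2-\theta_{\textnormal{gm}}$;
\item the relation $\mathfrak{X}(\sigma)=1/(\varepsilon_1 b_1-\mathfrak{X}(\shift\sigma))$, which the paper uses without comment in the proof of Theorem \ref{prop:projection-to-irrationals}.
\end{itemize}

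Your Step 4 is also a real addition. The paper simply asserts that the limit is irrational. Your integer argument proves it, and it works equally well from the paper's own two-sided bound.
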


\begin{proof}
    For $k \geq 1$, denote $x_k = \cfrac{\varepsilon_k}{q_{[k-1]} q_{[k]}}$.
    By Lemma \ref{lem:p[n]-q[n]} (5), we have
    \begin{equation}
    \label{eqn:x_k-ratio}
        \left| \frac{x_k}{x_{k+1}} \right| = \frac{q_{[k-1]}}{q_{[k+1]}} < (1-\theta_{\textnormal{gm}})^2 = \theta_{\textnormal{gm}} < 1.
    \end{equation}
    Therefore, the series $\sum_{k\geq 1} x_k$ converges to an irrational number $\theta$.
    By items (2), (3), and (4) of Lemma \ref{lem:p[n]-q[n]}, $\theta$ is in $\Irrat$, is equal to $\mathfrak{X}(\sigma)$, and has an infinite modified continued fraction expansion $\theta = 
    [ \varepsilon_1 b_1, \varepsilon_2 b_2, \varepsilon_3 b_3, \ldots ]_-$.
    Moreover, for $n \geq 1$, the inequality (\ref{eqn:x_k-ratio}) gives us
    \begin{align*}
        \left| \theta - \frac{p_{[n]}}{q_{[n]}} \right| 
        &= \left| \sum_{k = n+1}^\infty x_n \right| \leq \frac{|x_{n+1}|}{1- \theta_{\textnormal{gm}}} = \frac{1}{(1-\theta_{\textnormal{gm}}) q_{[n]} q_{[n+1]}} = \frac{2-\theta_{\textnormal{gm}}}{q_{[n]} q_{[n+1]}}
    \end{align*}
    and
    \begin{align*}
        \left| \theta - \frac{p_{[n]}}{q_{[n]}} \right| 
        \geq |x_{n+1}| - \left| \sum_{k = n+2}^\infty x_n \right|
        & \geq \frac{1}{q_{[n]}q_{[n+1]}} - \frac{2-\theta_{\textnormal{gm}}}{q_{[n+1]}q_{[n+2]}}
        \\
        & \geq \frac{ 1 - (2-\theta_{\textnormal{gm}})(1-\theta_{\textnormal{gm}})^2 }{q_{[n]}q_{[n+1]}} 
        = \frac{\theta_{\textnormal{gm}}}{q_{[n]}q_{[n+1]}}.
    \end{align*}
    Note that we have used Lemma \ref{lem:p[n]-q[n]} (5) on the last line above.
\end{proof}

The proposition above gives us a well-defined function 
\[
\mathfrak{X} : \The \to \Irrat.
\]
Recall the functions $\gauss: \Irrat \to \Irrat$ and $\mathfrak{Y}: \Irrat \to \The$ defined in the previous section.

\begin{theorem}
\label{prop:projection-to-irrationals}
    The function $\mathfrak{X}: \The \to \Irrat$ is a homeomorphism with inverse
    $\mathfrak{Y}: \Irrat \to \The$.
    Moreover, $\mathfrak{X}$ is a conjugacy between the standard shift map $\shift: \The \to \The$ and the map $\gauss: \Irrat \to \Irrat$.
\end{theorem}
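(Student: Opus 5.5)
The plan is to show three things: $\mathfrak{X}\circ\mathfrak{Y}=\mathrm{id}_{\Irrat}$, $\mathfrak{Y}\circ\mathfrak{X}=\mathrm{id}_{\The}$, and continuity of both maps. The conjugacy then comes almost for free: by construction $\mathfrak{Y}\circ\gauss=\shift\circ\mathfrak{Y}$, because the $n$-th entry of $\mathfrak{Y}(\gauss(\theta))$ is $(\varepsilon,\bar a)(\gauss^{n}(\theta))$. Once $\mathfrak{Y}$ is a bijection with inverse $\mathfrak{X}$, this gives $\mathfrak{X}\circ\shift=\gauss\circ\mathfrak{X}$.

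\emph{Step 1: $\mathfrak{X}\circ\mathfrak{Y}=\mathrm{id}$.} Fix $\theta$ and put $\sigma=\mathfrak{Y}(\theta)$. Proposition \ref{prop:ln} (3$_n$) gives
\[
|q_{[n]}\theta-p_{[n]}|=l_{[n]}=|\theta_0\cdots\theta_n|<2^{-n-1}.
\]
Since $q_{[n]}\ge1$, it follows that $|\theta-p_{[n]}/q_{[n]}|\to0$. These $p_{[n]},q_{[n]}$ are exactly the convergents of $\sigma$, so $\mathfrak{X}(\sigma)=\lim p_{[n]}/q_{[n]}=\theta$. In particular $\mathfrak{Y}$ is injective and $\mathfrak{X}$ is surjective.

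\emph{Step 2: $\mathfrak{Y}\circ\mathfrak{X}=\mathrm{id}$.} This is the step I expect to be the main obstacle, since one must check that an arbitrary sequence in $\The$ is actually the one produced by the dynamics. Given $\sigma=\seq{(\varepsilon_n,\bar a_n)}$, set $\theta=\mathfrak{X}(\sigma)$ and $\tau_{n}=\mathfrak{X}(\shift^{n}\sigma)$. Passing to the limit in the finite identity
\[
[x_1,\dots,x_n]_-=1/(x_1-[x_2,\dots,x_n]_-)
\]
gives $\tau_{n-1}=1/(\varepsilon_n b_n-\tau_n)$, that is, $\tau_n=-1/\tau_{n-1}+\varepsilon_n b_n$. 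I then read off the first symbol of $\tau_{n-1}$. Lemma \ref{lem:p[n]-q[n]} (4$_n$) in the limit gives $|\tau_n|\le\frac12$, and irrationality makes this strict. To get the sign, I use the finite computation from the proof of (4$_n$): $y:=\varepsilon_{n+1}\tau_n\in[0,\tfrac12]$, so $\varepsilon_{n+1}=\varepsilon(\tau_n)$. I also get
\[
1/|\tau_{n-1}|=b_n-\varepsilon_n\varepsilon_{n+1}y=
\begin{cases}
\bar a_n+1-y, & \varepsilon_n\varepsilon_{n+1}=+1,\\
\bar a_n+y, & \varepsilon_n\varepsilon_{n+1}=-1.
\end{cases}
\]
In both cases this lies strictly between $\bar a_n$ and $\bar a_n+1$, so $\bar a(\tau_{n-1})=\bar a_n$. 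The sign of $\tau_{n-1}$ equals $\varepsilon_n$ because $\varepsilon_n b_n-\tau_n$ has the sign of $\varepsilon_n$, as $b_n\ge2>|\tau_n|$. Comparing the recursion with Lemma \ref{lem:r-shift} (uniqueness of the representative mod $1$ in $\Irrat$) gives $\tau_n=\gauss(\tau_{n-1})$. Hence $\tau_{n-1}=\gauss^{n-1}(\theta)$ has symbol $(\varepsilon_n,\bar a_n)$, which is exactly $\mathfrak{Y}(\theta)=\sigma$.

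\emph{Step 3: continuity.} For $\mathfrak{Y}$, note that $\varepsilon$ and $\bar a$ are locally constant on $\Irrat$, because the discontinuities $\pm1/k$ and $0$ are rational, and $\gauss$ is continuous by Lemma \ref{lem:r-shift}. So each coordinate of $\mathfrak{Y}$ is locally constant, and $\mathfrak{Y}$ is continuous in the product topology. For $\mathfrak{X}$, if $\sigma,\sigma'$ agree in their first $n+1$ entries, then they share $p_{[n]},q_{[n]}$, because $b_n$ depends on $\varepsilon_{n+1}$. The error bound in the preceding proposition then gives
\[
|\mathfrak{X}(\sigma)-\mathfrak{X}(\sigma')|\le\frac{2(2-\theta_{\textnormal{gm}})}{q_{[n]}q_{[n+1]}}\to0,
\]
since $q_{[n]}$ grows geometrically by Lemma \ref{lem:p[n]-q[n]} (5). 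So $\mathfrak{X}$ is continuous, and it is a homeomorphism with inverse $\mathfrak{Y}$.
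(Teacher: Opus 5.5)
Your proof is correct, and it takes a different route from the paper's for injectivity of $\mathfrak{Y}$ and for continuity.

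\textbf{The shared step.} The paper proves $\mathfrak{Y}\circ\mathfrak{X}=\mathrm{id}$ essentially as in your Step 2, via $\mathfrak{X}(\shift\sigma)=-1/\mathfrak{X}(\sigma)+\varepsilon_1 b_1$. However, it simply asserts that $\varepsilon(\mathfrak{X}(\sigma))=\varepsilon_1$ and $\bar a(\mathfrak{X}(\sigma))=\bar a_1$. Your sign argument and your computation $1/|\tau_{n-1}|\in(\bar a_n,\bar a_n+1)$ supply exactly the verification skipped there.

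\textbf{The paper's route.} Injectivity is proved dynamically. $\gauss^{k-1}$ maps the cylinder set $E_k(\sigma)$ homeomorphically onto $E_1(\shift^{k-1}\sigma)$, which has length at most $1/6$. Uniform expansion $\gauss'\ge 4$ then forces $E_k(\sigma)$ to be an interval of length at most $6^{-1}4^{1-k}$. The same shrinking intervals serve as a neighborhood basis at $\theta$ matching the cylinder basis at $\sigma$, which gives continuity of $\mathfrak{X}$ and $\mathfrak{Y}$ at once.

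\textbf{Your route.} You argue arithmetically. Proposition~\ref{prop:ln}~(3$_n$) gives $|\theta-p_{[n]}/q_{[n]}|=l_{[n]}/q_{[n]}$ for the convergents of $\mathfrak{Y}(\theta)$, so $\mathfrak{X}\circ\mathfrak{Y}=\mathrm{id}$ directly. Continuity of $\mathfrak{Y}$ follows from local constancy of $\varepsilon$ and $\bar a$ on the irrationals. Continuity of $\mathfrak{X}$ follows from the error bound of the preceding proposition together with the uniform growth $q_{[n]}\ge(1-\theta_{\textnormal{gm}})^{-n}$ from Lemma~\ref{lem:p[n]-q[n]}~(5).

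\textbf{Comparison.} Your approach reuses estimates already proved and gives an explicit, uniform modulus of continuity for $\mathfrak{X}$. It also avoids checking that the branches of $\gauss^{k-1}$ are homeomorphisms onto intervals. The paper's approach does not need Proposition~\ref{prop:ln}, and it gets both continuity statements from one family of sets.

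\textbf{Minor point.} In your final estimate, $q_{[n+1]}$ can differ for $\sigma$ and $\sigma'$. Since both exceed $q_{[n]}$, just bound by $2(2-\theta_{\textnormal{gm}})/q_{[n]}^2$.
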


This theorem settles Theorem \ref{main-thm-1}.

\begin{proof}
    Let $\sigma = \seq{ (\varepsilon_n,\bar{a}_n) }_{n\geq 1} \in \The$ and let $b_n = b_n(\sigma)$ for $n \geq 1$.
    We have 
    \[
        \mathfrak{X}(\shift (\sigma)) = [\varepsilon_2 b_2, \varepsilon_3 b_3, \ldots]_- = -\frac{1}{\mathfrak{X}(\sigma)} + \varepsilon_1 b_1 = \gauss ( \mathfrak{X}(\sigma)).
    \]
    Let $\theta = \mathfrak{X}(\sigma)$.
    Then, $\varepsilon(\theta) = \vartheta_1$ and $\bar{a}(\theta) = \bar{a}_1$.
    Since $\gauss^{n-1}(\theta) = \mathfrak{X}(\shift^{n-1}(\theta))$, we also have that $\varepsilon(\gauss^{n-1}(\theta)) = \vartheta_n$ and $\bar{a}(\gauss^{n-1}(\theta)) = \bar{a}_n$.
    Therefore, $\mathfrak{Y} \circ \mathfrak{X}$ is equal to the identity map on $\The$.

    We have established that $\mathfrak{Y}$ is surjective. Now, we will show the injectivity of $\mathfrak{Y}$.
    Fix $\sigma = \seq{ (\varepsilon_n,\bar{a}_n) }_{n\geq 1}$.
    For $k \geq 1$, denote
    \[
        E_k(\sigma) = \left\{ \theta \in \Irrat \: : \: \varepsilon(\gauss^{n-1}(\theta)) = \varepsilon_n \text{ and } \bar{a}(\gauss^{n-1}(\theta)) = \bar{a}_n \text{ for } 1\leq n \leq k \right\}.
    \]
    The preimage $\mathfrak{Y}^{-1}(\sigma)$ is non-empty and it is contained in $\cap_{n\geq 1} E_k(\sigma)$.
    The set $E_1(\sigma)$ is the open subset of $\Irrat$ consisting of irrationals between $\frac{1}{\varepsilon_1\bar{a}_1}$ and $\frac{1}{\varepsilon_1(\bar{a}_1+1)}$; since $\bar{a} \geq 2$, this open set has Euclidean length bounded above by $6^{-1}$.
    In general, $\gauss^{k-1}$ sends $E_k(\sigma)$ homeomorphically onto $E_1(\shift^{k-1}(\sigma))$.
    Observe that $\gauss$ is uniformly expanding: $\gauss' \geq 4$.
    This implies that $E_k(\sigma)$ is equal to $\Irrat \cap J$ for some open interval $J$ of Euclidean length at most $6^{-1} \cdot 4^{1-k}$.
    In particular, $\mathfrak{Y}^{-1}(\sigma)$ is a singleton and thus $\mathfrak{Y}$ is injective.

    Pick $\sigma \in \The$ and let $\theta = \mathfrak{X}(\sigma)$.
    The sets $E_k(\sigma)$, $k \geq 1$ described above form a basis of open neighborhoods of $\theta$. The images $\mathfrak{Y}(E_k(\sigma))$, $k \geq 1$ are precisely the cylinder sets about $\sigma$ which form a basis of open neighborhoods of $\sigma$ with respect to the topology of $\The$.
    Therefore, both $\mathfrak{X}$ and $\mathfrak{Y}$ are continuous.
\end{proof}

\begin{example}
    For the golden mean irrationals in $\Irrat$ are $\theta_{\textnormal{gm}} = \frac{3-\sqrt{5}}{2}$ and $-\theta_{\textnormal{gm}} = \frac{\sqrt{5}-3}{2}$, we have
    \begin{align*}
        \mathfrak{Y}(\theta_{\textnormal{gm}}) &= \seq{ (+,2), (+,2), (+,2), (+,2),\ldots },\\
        \mathfrak{Y}(-\theta_{\textnormal{gm}}) &= \seq{ (-,2), (-,2), (-,2), (-,2), \ldots }.
    \end{align*}
    The corresponding modified continued fraction expansions are
    \[
        \theta_{\textnormal{gm}} = [ 3,3,3,\ldots ]_-  =\cfrac{1}{3 - \frac{1}{3 - \frac{1}{3 - \ldots}}}, \qquad
        -\theta_{\textnormal{gm}} = [ -3,-3,-3,\ldots]_-  = \cfrac{1}{-3 - \frac{1}{-3 - \frac{1}{-3 - \ldots}}}.
    \]
    Both $\theta_{\textnormal{gm}}$ and $-\theta_{\textnormal{gm}}$ have the same first return times, namely
    \[
        q_{[0]} = 1, \quad q_{[1]} = 3, \quad q_{[2]}=8, \quad q_{[3]} = 21, \quad q_{[4]} = 55, \quad q_{[5]} = 144, \quad \ldots.
    \]
    Each $q_{[n]}$ is the $(2n+2)$\textsuperscript{th} Fibonacci number.
\end{example}

\subsection{The nearest integer continued fraction expansion}
\label{ss:nearest-integer}

Consider the nearest integer continued fraction map
\[
    f_{1/2}: \Irrat \to \Irrat, \qquad
    f_{1/2}(\theta) = \frac{1}{|\theta|} - \left\lfloor \frac{1}{|\theta|} + \frac{1}{2}\right\rfloor.
\]
For $n \geq 0$, we define the sequences $\{ \tilde{a}_n \}_{n\geq 1}$ in $\N_{\geq 2}$, $\{ \tilde{\theta}_n \}_{n\geq 0}$ in $(0,\frac{1}{2})\backslash \Q$, and $\{ \tilde{\varepsilon}_n\}_{n \geq 1}$ in $\{-1,+1\}$ inductively as follows:
\begin{itemize}
    \item $\tilde{\varepsilon}_{n+1}$ is the sign of $\tilde{\theta}_n$;
    \item $\tilde{a}_{n+1} = \left\lfloor |\tilde{\theta}_n|^{-1} + \frac{1}{2}\right\rfloor$ is the integer closest to $|\tilde{\theta}_n|^{-1}$;
    \item $\tilde{\theta}_{n+1} = f_{1/2}(\tilde{\theta}_n) = |\tilde{\theta}_n|^{-1} - \tilde{a}_{n+1}$.
\end{itemize}
Then, we can write
\begin{equation}
    \theta = \cfrac{\tilde{\varepsilon}_1}{\tilde{a}_1 + \cfrac{\tilde{\varepsilon}_2}{\tilde{a}_2 + \frac{\tilde{\varepsilon}_3}{\tilde{a}_3 + \ldots}}}. 
    \tag{$\ddagger$}
\end{equation}
The map $f_{1/2}$ is an even function and it is related to $\gauss$ by 
\[
f_{1/2}(\theta) = \begin{cases}
    \gauss(\theta) & \text{ if } \theta < 0, \\
    -\gauss(\theta) & \text{ if } \theta > 0.
\end{cases} 
\]
It produces the same convergents $\tilde{p}_{[n]}/\tilde{q}_{[n]}$ as those $p_{[n]}/q_{[n]}$ of $\gauss$ up to a sign change.
The terms $\tilde{a}_n$'s coincide with the $b_n$'s described in Section \ref{ss:more-notation}. 
Our modified Gauss map $\gauss$ has two primary advantages over $f_{1/2}$.
\begin{itemize}
    \item The map $\gauss$ locally preserves orientation everywhere, whereas $f_{1/2}$ reverses orientation the positive irrationals.
    \item While $f_{1/2}$ still acts as a shift on $(\tilde{\varepsilon}_n,\tilde{a}_n)$'s, these $\tilde{a}_n$'s satisfy the awkward admissibility rule that $\tilde{a}_n \geq 3$ whenever $\tilde{\varepsilon}_n \tilde{\varepsilon}_{n+1} = -1$.
\end{itemize}

Nearest integer continued fraction was initially introduced by Bernhard Minnigerode \cite{Mi1873} in 1873.
The notation $f_{1/2}$ was adapted from Nakada \cite{Na81} who introduced the one-parameter family 
\[
f_{\alpha}: [ \alpha -1, \alpha) \to [ \alpha -1, \alpha), \qquad \alpha \in \left[ \frac{1}{2},1 \right]
\]
of continued fractions map, where $f_1$ is the classical Gauss map.

In the field of complex dynamics, the expansion ($\ddagger$) first appeared in the seminal work of Yoccoz \cite{Yoc95} in which he introduced sector renormalization.
The adaptation of ($\ddagger$) was also evident in the work of Inou-Shishikura \cite{IS} on near-parabolic renormalization as well as its numerous applications, for example \cite{CC15,AC17,Che25}.
In those works, the authors are primarily working with the sequence of positive irrationals 
\[
\alpha_n = |\tilde{\theta}_n|, \qquad n \geq 0.
\]

\subsection{The regular continued fraction expansion}
\label{ss:conversion}

Every irrational $\theta \in \Irrat$ admits a unique continued fraction expansion
\[
    \theta = \varepsilon \, [a_1,a_2,a_3,\ldots]_+ := \cfrac{\varepsilon}{a_1+\cfrac{1}{a_2 + \frac{1}{a_3 + \ldots}}}
\]
where $\varepsilon \in \{-1,+1\}$, $a_1 \in \N_{\geq 2}$, and $a_n \in \N$ for all $n \geq 2$.
For $n \geq 1$, the $n$\textsuperscript{th} rational approximation of $\theta$ is 
\[
\frac{p_n}{q_n} = \varepsilon \, [a_1,a_2,\ldots,a_n]_+
\]
where $p_n$ and $q_n$ are co-prime integers and $q_n > 0$. These $p_n$'s and $q_n$'s start with
\[
    p_n = 0, \; q_0 = 1, \qquad p_1 = \varepsilon, \; q_1 = a_1,
\]
and the rest follow the recurrence relation
\[
    p_n = a_n p_{n-1} + p_{n-2}, \qquad q_{n} = a_n q_{n-1} + q_{n-2} \quad  \text{ for } n \geq 2.
\]
Note that the $q_n$'s are independent of the sign $\varepsilon$.
The numbers $q_n \theta - p_n$ alternate in sign, that is 
\[
(q_n \theta - p_n)(q_{n+1} \theta - p_{n+1}) < 0 \qquad \text{ for all } n \geq 0.
\]

There are two advantages of the regular continued fraction expansion, namely the simpler recursive formula for the rational approximations and the alternating nature of points of closest return.
The main disadvantage is that the regular expansion does not get transformed as nicely under sector renormalization:
\[
    \gauss\left( \varepsilon [a_1,a_2,\ldots]_+ \right) =
    \begin{cases}
        -\varepsilon \, [a_2,a_3,\ldots]_+ & \text{ if } a_2 \geq 2, \\
        \varepsilon \, [a_3+1, a_4, \ldots]_+ & \text{ if } a_2 = 1.
    \end{cases}
\]
Arithmetically, the regular expansion is more suited to studying the action of the traditional Gauss map $G(\theta) = \{ \frac{1}{\theta} \}$ acting on irrationals on the unit interval $(0,1)$.

There is a singularization procedure to convert the regular continued fraction expansion $\theta = \varepsilon \, [a_1, a_2,\ldots]_+$ to the modified expansion with terms governed by $\seq{ (\varepsilon_n,\bar{a}_n) }_{n\geq 1} \in \The$.
First, set $\varepsilon_1 = \varepsilon$ and $\bar{a}_1 = a_1$. To get $(\varepsilon_2, \bar{a}_2)$, there are two cases.
\begin{itemize}
    \item If $a_2 \geq 2$, then $\varepsilon_2 = - \varepsilon_1$, and $\bar{a}_2 = a_2$.
    \item Else if $a_2 = 1$, then $\varepsilon_2 = \varepsilon_1$, and $\bar{a}_2 = a_3+1$.
\end{itemize}
In general, it is convenient to use the increasing sequence of integers
\[
    0 = t_0 < t_1 < t_2 < t_3 < \ldots
\]
defined as follows. For $n \geq 0$,
\begin{itemize}
    \item if $a_{t_n + 2} \geq 2$, then $t_{n+1} = t_{n} + 1$, $\varepsilon_{n+2} = - \varepsilon_{n+1}$ and $\bar{a}_{n+2} = a_{t_n + 2}$;
    \item if $a_{t_n + 2} = 1$, then $t_{n+1} = t_n + 2$, $\varepsilon_{n+2} = \varepsilon_{n+1}$ and $\bar{a}_{n+2} = a_{t_n + 3} + 1$.
\end{itemize}
With this convention, we have that 
\[
    q_{[n]} = q_{t_n} \qquad \text{ for all } n\geq 0.
\]
Geometrically, here is what happens.
Consider the lengths 
\[
l_n := |q_n \theta - p_n|, \qquad n \geq 1
\]
which form strictly decreasing sequence.
The rate of decrease is generally non-uniform and the ratio $l_{n+1}/l_n$ can be arbitrarily close to $1$.
According to Proposition \ref{prop:q[n]-02}, the subsequence $\{t_n\}_{n \geq 0}$ records the moments in which we have definite contraction by one half:
\[
    l_{t_{n+1}} < \frac{l_{t_n}}{2}.
\]
The moments that are not of the form $t_n$ are deemed as ``negligible'' levels and can be thrown out.

\section{The compactification}

In this section, we will discuss a natural compactification $\TheCpt$ of $\Irrat$ that respects the dynamical behavior of $\gauss$.

\subsection{The compactification $\TheCpt$}

\begin{definition}
    Consider a continuous dynamical system $f: X \to X$ on a topological space $X$.
    A \emph{compactification} of the dynamical system $(X,f)$ is a triple $(j,\bar{X},\bar{f})$ where
    \begin{enumerate}[label=\textnormal{(\alph*)}]
        \item $j: X \to \bar{X}$ is an embedding with dense image,
        \item $\bar{X}$ is a compact space, and
        \item $\bar{f}: \bar{X} \to \bar{X}$ is a continuous map such that $\bar{f} \circ j = j \circ f$.
    \end{enumerate}
    We say that a compactification $(j,\bar{X},\bar{f})$ of $(X,f)$ \emph{respects} an embedding $h: X \to Y$ onto a compact space $Y$ if there exists a continuous map $\bar{h}: \bar{X} \to Y$, called a \emph{$j$-extension} of $h$, such that $\bar{h} \circ j = h$.
\end{definition}

In the definition above, both $\bar{f}$ and $\bar{h}$ are unique because of the density of $j(X)$ in $\bar{X}$.
Theorem \ref{main-thm-1} gives us a natural compactification of $(\Irrat,\gauss)$.

\begin{definition}
    Let $\overline{\N} = \N \cup \{\infty\}$, where the addition of the point $\displaystyle \infty = \lim_{n \to \infty} n$ makes it a compactification of the discrete space $\N$.
    Let 
\[
\overline{\Sigma} := \{-1,+1\} \times \overline{\N}_{\geq 2} \qquad \text{ and } \qquad
    \TheCpt := \overline{\Sigma}^{\N}. 
\]
    Given an element $\ttheta = \seq{(\varepsilon_n, \bar{a}_n)}_{n \geq 1}$ of $\TheCpt$, any integer $n \geq 0$ such that $\bar{a}_{n+1} = \infty$ is called a \emph{parabolic level} for $\ttheta$.
    We say that $\ttheta$ is called an \emph{enriched rational} if parabolic levels exist, an \emph{irrational} if otherwise.
    For $n \geq 1$, we will denote
    \[
        b_n(\ttheta) := \begin{cases}
            \bar{a}_n + \frac{1+\varepsilon_n \varepsilon_{n+1}}{2} & \text{ if } \bar{a}_n < \infty, \\
            \infty & \text{ if } \bar{a}_n = \infty.
        \end{cases}
    \]
\end{definition}
    
The sequence space $\TheCpt$ is topologically a Cantor set. 
The inclusion map $\iota : \Sigma^{\N} \to \TheCpt$ induces a dense embedding 
    \[
    \bar{\iota}:= \iota \circ \mathfrak{Y} : \Irrat \to \TheCpt
    \]
and the modified Gauss map $\gauss$ naturally admits an $\bar{\iota}$-extension
\[
    \bar{\gauss}: \TheCpt \to \TheCpt
\]
given by the standard shift map $\bar{\gauss} \equiv \shift$.
Hence, $(\bar{\iota}, \TheCpt, \bar{\gauss})$ is a compactification of $(\Irrat, \gauss)$.

There are three compact embeddings of $\Irrat$ of high interest, each of which admits an $\bar{\iota}$-extension. 

\subsubsection{The modular embedding}

The first one is the naive embedding
\[
\mu : \Irrat \to \T, \qquad
\theta \mapsto \theta \textnormal{ (mod }1\textnormal{)} 
\]
into the circle $\T = \R/\Z$, called the \emph{modular embedding}.

\begin{proposition}
    The modular embedding $\mu$ admits an $\bar{\iota}$-extension $\overline{\mu} : \TheCpt \to \T$.
    For every rational number $\theta$ in $\mathbb{T}$, $\overline{\mu}^{-1}(\theta)$ is homeomorphic to $\TheCpt$ itself.
\end{proposition}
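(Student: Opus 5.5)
We construct $\overline{\mu}$ by extending the continued fraction itself. For $\ttheta = \seq{(\varepsilon_n,\bar{a}_n)}_{n\geq 1} \in \TheCpt$ let $N \geq 0$ be its first parabolic level ($N = \infty$ if $\ttheta$ is irrational). Set $\overline{\mu}(\ttheta) = \mu(\mathfrak{X}(\ttheta))$ if $\ttheta$ is irrational. Otherwise set
\[
\overline{\mu}(\ttheta) := [\varepsilon_1 b_1,\ldots,\varepsilon_N b_N]_- = \frac{p_{[N]}}{q_{[N]}} \pmod 1,
\]
with the value $0$ when $N=0$. This is the only sensible choice. Indeed, $\mathfrak{X}$ of a sequence whose $(N+1)$-st entry is large equals $[\varepsilon_1 b_1,\ldots,\varepsilon_N b_N, \varepsilon_{N+1}b_{N+1} + O(1)]_-$, and the inner tail $1/(\varepsilon_{N+1}b_{N+1} - \ldots)$ tends to $0$. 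Here $b_N$ uses $\varepsilon_{N+1}$, which is still defined at the parabolic level. Density of $\bar\iota(\Irrat)$ then forces uniqueness, so it remains to show continuity.

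\textbf{Continuity.} I would use the Möbius form of the expansion. For finite $n$,
\[
\mathfrak{X}(\sigma) = \frac{p_{[n]} + \varepsilon_n\varepsilon_{n+1}\, p_{[n-1]} \, t}{q_{[n]} + \varepsilon_n\varepsilon_{n+1}\, q_{[n-1]} \, t}
\]
for a suitable tail variable $t$ with $|t| \leq 1/2$ (up to the sign convention of Proposition \ref{prop:ln}; more simply, $\mathfrak{X}(\sigma) = M_1\circ\cdots\circ M_n(\gauss^n\theta)$ with $M_k(x) = 1/(\varepsilon_k b_k - x)$). Take $\ttheta^{(j)} \to \ttheta$ in $\TheCpt$.

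If $\ttheta$ is irrational, the first $n$ entries eventually agree. The cylinder estimate from the proof of Theorem \ref{prop:projection-to-irrationals} gives intervals of length at most $6^{-1}4^{1-n}$. That estimate extends to enriched points in the same cylinder, because their images are limits of irrationals in the closed interval $\overline{E_n}$.

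If $N < \infty$, the first $N$ entries eventually agree, $\varepsilon_{N+1}$ eventually agrees, and $\bar{a}^{(j)}_{N+1} \to \infty$. Then $\overline{\mu}(\ttheta^{(j)}) = M_1\circ\cdots\circ M_N(x_j)$ with $|x_j| \leq 1/\bar{a}^{(j)}_{N+1} \to 0$. This holds in either sub-case: when $\ttheta^{(j)}$ is irrational, and when it is enriched with a later first parabolic level, since that value is again $M_{N+1}(\cdots)$ of something of modulus at most $1/2$. By continuity of the fixed Möbius map at $0$, the values converge to $p_{[N]}/q_{[N]}$. A sanity check is needed: $M_N$ with $b_N$ finite never hits a pole near $x=0$, since $|\varepsilon_N b_N| \geq 2$. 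Reducing mod $1$ handles the case $N=0$, where values near $0^\pm$ both go to $0\in\T$. I expect the bookkeeping in this case — checking that uniformly $|x_j|\le 1/\bar a^{(j)}_{N+1}$ even when later levels are parabolic — to be the main technical point, though it is routine.

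\textbf{Fibers.} Let $r \in \T$ be rational. The claim I would prove is that $\overline{\mu}^{-1}(r)$ is exactly the set of $\ttheta$ whose first parabolic level is $N$ and whose prefix $(\varepsilon_1,\bar a_1),\ldots,(\varepsilon_N,\bar a_N)$, together with $\varepsilon_{N+1}$, lies in a finite (indeed at most two-element) set determined by $r$; the entries after level $N+1$ are arbitrary. Existence of such a prefix: run $\gauss$ on $r$ (lifted to $[-1/2,1/2]$) until it hits $0$. Each step is well defined, and terminates since denominators decrease. The endpoint $1/2$ is ambiguous and gives two codings, $0$ corresponds to $N=0$, and the last sign $\varepsilon_{N+1}$ is free. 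Uniqueness: two distinct finite admissible prefixes giving the same rational would contradict injectivity of the expansion, which follows from Lemma \ref{lem:p[n]-q[n]}(1)--(4) and the cylinder disjointness argument. Irrational points are excluded since $\overline{\mu}$ is injective on them and takes irrational values. The fiber is therefore a finite disjoint union of clopen sets, each homeomorphic to $\{\infty\}\times\overline{\Sigma}^{\N}$ (with $\varepsilon_{N+1}$ possibly free as well). Each of these is a Cantor set homeomorphic to $\TheCpt$, and a finite disjoint union of copies of $\TheCpt$ is again homeomorphic to $\TheCpt$ by Brouwer's characterization. The most delicate part here is the precise enumeration of admissible prefixes (the $\varepsilon_{N+1}$ dependence of $b_N$ and the $\pm1/2$ ambiguity), but only finiteness matters for the final homeomorphism.
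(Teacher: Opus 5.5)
Your proposal is correct and follows essentially the paper's route. You use the same definition of $\overline{\mu}$ by truncating at the first parabolic level, get continuity from the cylinder-interval estimates in the proof of Theorem~\ref{prop:projection-to-irrationals}, and obtain the fibers by running $\gauss$ until it hits $0$, which yields finitely many clopen copies of $\overline{\Sigma}^{\N}$. The one slip is your ``uniqueness'' sentence, which as written contradicts your own two-element count: for instance $(\varepsilon,m-1),(\varepsilon,\infty)$ and $(\varepsilon,m),(-\varepsilon,\infty)$ both give $\varepsilon/m$, because adjacent cylinder intervals share endpoints. This does not hurt the proof: finiteness follows because $q_{[N]}$ must be the reduced denominator of $r$ (Lemma~\ref{lem:p[n]-q[n]}(1)), which bounds $N$ and every $b_k$ via the growth of the $q_{[k]}$. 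Finiteness is not even needed, since the fiber is closed and each of its points lies in a perfect clopen piece contained in it, so it is a Cantor set.
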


\begin{proof}
    For an enriched rational $\ttheta = \seq{ (\varepsilon_n,\bar{a}_n) }_{n\geq 1} \in \TheCpt$ with smallest parabolic level $N \geq 0$,
    we set 
    \[
    \bar{\mu}(\ttheta) = \begin{cases}
        0 & \text{ if } N = 0, \\
        [\varepsilon_1 b_1,\ldots, \varepsilon_{N} b_{N}]_- & \text{ if } N \geq 1.
    \end{cases}
    \]
    The continuity of $\overline{\mu} : \TheCpt \to \mathbb{T}$ follows from the proof of Proposition \ref{prop:projection-to-irrationals}.
    
    For the second claim, observe that 
    \[
    \overline{\mu}^{-1}(0) = \left\{ 
    \bar{a}_1=\infty \right\}, \quad \overline{\mu}^{-1}\left(\pm \frac{1}{2}\right) = \left\{ 
    \varepsilon_1 \varepsilon_2 = -,  \,\bar{a}_1=2, \, \bar{a}_2=\infty \right\},
    \]
    and for $m \geq 3$ and $\varepsilon \in \{-1,+1\}$, we have 
    \[
    \overline{\mu}^{-1}\left(\frac{\varepsilon}{m}\right) = \left\{ 
    \varepsilon_1 = \varepsilon, \bar{a}_1 + \frac{1+\varepsilon \varepsilon_2}{2} =m, \bar{a}_2=\infty \right\}.
    \]
    All of the above are homeomorphic to $\TheCpt$.
    For other rational numbers $\theta$, let $k \geq 2$ be the first moment such that $\gauss(\theta) = 0$, then $\varepsilon_j(\theta)$ and $\bar{a}_j(\theta)$ are well-defined and finite for $j \leq k-1$, $\bar{a}_{k+1}= \infty$, the sequences $\{\varepsilon_j\}_{j \geq k+1}$ and $\{\bar{a}_j\}_{j\geq k+2}$ are completely arbitrary, and the pair $(\varepsilon_k, \bar{a}_k)$ is completely determined by $\varepsilon_{k+1}$ in the way that is similar to the $\pm \frac{1}{m}$ case.
\end{proof}

This proposition tells us that, with respect to $\overline{\mu}$, the space $\TheCpt$ can be viewed as a way of blowing up the circle $\mathbb{T}$ at its rational points.
In light of applications in holomorphic dynamics, the structure of the fibers of a rational point on the circle sheds light on the necessity to blow up the Mandelbrot set at parabolic parameters in order to claim continuity of dynamical sets (e.g. the Mother Hedgehog, the (filled) Julia set, etc).

\subsubsection{The segment embedding}

The second embedding to consider is 
\[
    \eta: \Irrat \to [0,1], \qquad \eta(\theta) = 
    \begin{cases}
        \theta & \text{ if } \theta > 0, \\
        \theta + 1 & \text{ if } \theta <0,
    \end{cases}
\]
which we call the \emph{segment embedding}.
This map is more natural to consider rather than $\mu$ for the following reason.
The modified Gauss map $\gauss: \Irrat \to \Irrat$ extends to a self-map of $\mathbb{T}$ that is continuous (in fact, $C^1$ smooth) everywhere except at the origin. 
See Figure \ref{fig:gauss} for reference.
Since the origin is singular from both the left and right, it is natural to distinguish between the left side of $0$ and the right side of $0$.
The segment embedding precisely does this as it identifies the ends $-\frac{1}{2}$ and $\frac{1}{2}$ and splits the left and right sides of the origin to two separate ends $0$ and $1$.
The diagram below is commutative.
\begin{center}
\begin{tikzcd}
    \Irrat \arrow[rd, "\mu"'] \arrow[r, "\eta"] & {[0,1]} \arrow[d, "0 \sim 1"] \\
    & \T
\end{tikzcd}
\end{center}

\begin{proposition}
\label{prop:compact-eta}
    The segment embedding $\eta$ admits an $\bar{\iota}$-extension $\bar{\eta}: \TheCpt \to [0,1]$, which is surjective.
\end{proposition}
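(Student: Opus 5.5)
The plan is to define $\bar{\eta}$ explicitly on enriched rationals and then prove continuity at every point of $\TheCpt$ using the nested intervals $E_k(\sigma)$ from the proof of Theorem \ref{prop:projection-to-irrationals}. For an irrational $\ttheta$ we must set $\bar\eta(\ttheta)=\eta(\mathfrak{X}(\ttheta))$. For an enriched rational with smallest parabolic level $N$ we use the lifted value of $\bar\mu(\ttheta)$, with the sign ambiguity at $0$ resolved by $\varepsilon_{N+1}$.

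If $N=0$, i.e. $\bar a_1=\infty$, set $\bar\eta(\ttheta)=0$ when $\varepsilon_1=+1$ and $\bar\eta(\ttheta)=1$ when $\varepsilon_1=-1$. Indeed, $\theta=\varepsilon_1/x$ with $x\to\infty$ tends to $0^{\pm}$, and $\eta$ sends $0^+\mapsto 0$ and $0^-\mapsto 1$. If $N\ge1$, set $\bar\eta(\ttheta)=\eta$ applied to the rational $[\varepsilon_1b_1,\dots,\varepsilon_Nb_N]_-$, where $b_N=\bar a_N+\frac{1+\varepsilon_N\varepsilon_{N+1}}2$ is finite. This rational lies in $[-\tfrac12,\tfrac12]\setminus\{0\}$ by Lemma \ref{lem:p[n]-q[n]}(4). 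The value $\pm\frac12$ occurs only when $N=1$, $\bar a_1=2$ and $\varepsilon_1\varepsilon_2=-1$. Both $\eta(\tfrac12)=\tfrac12$ and $\eta(-\tfrac12)=\tfrac12$, so $\eta$ extends continuously to $[-\frac12,\frac12]\setminus\{0\}$ and no ambiguity arises there.

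For continuity, take $\ttheta^{(j)}\to\ttheta$ in the product topology. If $\ttheta$ is irrational, the first $k$ symbols eventually agree, and all values (irrational or enriched) lie in the closure of $E_k(\ttheta)$. By the argument in the proof of Theorem \ref{prop:projection-to-irrationals}, this is an interval of length $\le 6^{-1}4^{1-k}$ not containing $0$, since $|\theta|\ge 1/(\bar a_1+1)$ is bounded away from $0$. So $\eta$ is uniformly continuous there and we conclude.

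The main obstacle is an enriched rational with parabolic level $N$. The first $N$ symbols and $\varepsilon_{N+1}$ eventually agree, and $\bar a^{(j)}_{N+1}\to\infty$. We write $\theta^{(j)}=[\varepsilon_1b_1,\dots,\varepsilon_Nb_N^{(j)}-y_j]_-$ style via the Möbius map
\[
 \theta=\frac{p_{[N]}-\varepsilon_N\varepsilon_{N+1}\,\tau\,p_{[N-1]}}{q_{[N]}-\varepsilon_N\varepsilon_{N+1}\,\tau\,q_{[N-1]}},\qquad \tau=\varepsilon_{N+1}\,(\text{value of the shifted tail}),
\]
as in the proof of Lemma \ref{lem:p[n]-q[n]}(3). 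The shifted tail is an $N=0$ situation, so its value is $\varepsilon_{N+1}/x$ with $x\ge \bar a_{N+1}^{(j)}\to\infty$, and $\tau\to 0$. Here $b_N$ depends only on $\varepsilon_{N+1}$, so it is eventually constant. By continuity of this Möbius map at $\tau=0$, $\theta^{(j)}\to p_{[N]}/q_{[N]}$. Since this limit is nonzero for $N\ge1$, $\eta$ is continuous there and we are done. For $N=0$ we check directly that $\theta^{(j)}$ has sign $\varepsilon_1$ and absolute value $\le 1/\bar a_1^{(j)}\to0$, giving limit $0$ or $1$ as defined.

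Surjectivity is then immediate. $\TheCpt$ is compact, $\bar\eta$ is continuous, and its image contains the dense set $\eta(\Irrat)$ of $[0,1]$, so the image is closed and dense, hence all of $[0,1]$.
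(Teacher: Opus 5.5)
Your proof is correct and takes the paper's approach: you define $\bar\eta$ on enriched rationals exactly as the paper does, namely as $\frac{1-\varepsilon_1}{2}+[\varepsilon_1b_1,\dots,\varepsilon_Nb_N]_-$, which is $0$ or $1$ according to $\varepsilon_1$ when $N=0$. The paper only declares continuity and surjectivity "straightforward", whereas you verify them: shrinking intervals $E_k$ at irrational points, the M\"obius representation of the tail at enriched rationals, and compactness plus density for surjectivity. One small ordering point: your claim that enriched values near an irrational point lie in $\overline{E_k}$ is justified only by the M\"obius argument you give afterwards.
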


\begin{proof}
    On the irrationals, $\eta$ can be symbolically written as
\[
    \eta \circ \mathfrak{X}( \seq{ (\varepsilon_n,\bar{a}_n) }_{n\geq 1} ) = 
    \frac{1-\varepsilon_1}{2} + [\varepsilon_1 b_1,\varepsilon_2 b_2, \ldots]_-.
\]
    Define the extension $\bar{\eta}$ symbolically in the same way as $\eta \circ \mathfrak{X}$.
    For the enriched rationals, we make the convention that if $N \geq 0$ is the smallest parabolic level, then
    \[
        [\varepsilon_1 b_1, \varepsilon_2 b_2, \ldots]_- 
        = \begin{cases}
            0 & \text{ if } N = 0, \\
            [\varepsilon_1 b_1, \varepsilon_2 b_2, \ldots, \varepsilon_N b_N]_- & \text{ if } N \geq 1.
        \end{cases}
    \]
    The continuity and surjectivity of $\bar{\eta}$ are straightforward.
\end{proof}

\subsubsection{The dynamical embedding}

The third embedding to consider is the \emph{dynamical embedding} $\ord: \Irrat \to \Omega$ mentioned in the introduction.
It has a strong motivation arising from the renormalization theory of critical circle maps and neutral quadratic polynomials.

Given integers $k,l \geq 1$, let $\Omega_{k,l}$ denote the set of ambient isotopy classes of maps $f:\{0,1\ldots,k\}^{l+1} \to \T$, i.e. the set of maps $f$ modulo isotopy of the image of $f$ in $\T$.
It corresponds to the set of all circular permutations of $s \in \N$ elements where $1 \leq s \leq ((k+1)^{l+1}-1)!$.
The set $\Omega_{k,l}$ is finite and we will equip it with the discrete topology.
We will consider the product space
\[
    \Omega = \prod_{k, l \geq 1} \Omega_{k,l}
\]
which is a Cantor set.
Given an irrational $\theta \in \Irrat$, 
we denote by $\ord_{k,l}(\theta) \in \Omega_{k,l}$ the ambient isotopy class of the embedding 
\[
    (j_0,\ldots,j_{l}) \mapsto 
    \left( \rotate_{\theta}^{[0]} \right)^{j_0}\circ \left( \rotate_{\theta}^{[1]} \right)^{j_2} \circ \ldots \circ \left( \rotate_{\theta}^{[l]} \right)^{j_{l}} (1).
\]
This gives us a locally constant map
\[
    \ord_{k,l}: \Irrat \to \Omega_{k,l} \qquad \text{ for all } k,l \geq 1,
\]
which altogether induces the embedding
\[
    \ord: \Irrat \to \Omega, \qquad
    \ord(\theta) = \{\ord_{k,l}(\theta)\}_{k,l}.
\]

\begin{proposition}
\label{prop:compact-ord}
    The embedding $\ord$ admits an $\bar{\iota}$-extension $\overline{\ord}: \TheCpt \to \Omega$, which is again an embedding.
\end{proposition}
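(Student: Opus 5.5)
The plan is to reduce everything to signs of finitely many real linear forms, which can then be controlled in the compact topology. By Proposition \ref{prop:ln} (3$_n$), $q_{[i]}\theta \equiv \varepsilon_{i+1} l_{[i]} \pmod 1$. Hence the point indexed by $(j_0,\ldots,j_l)$ sits at angle
\[
  x_{\mathbf j}(\theta) := \sum_{i=0}^{l} j_i\, \varepsilon_{i+1}\, l_{[i]} \pmod 1 ,
\]
where $l_{[i]} = |\theta_0\cdots\theta_{i-1}|\,|\theta_i|$ and all the $l_{[i]}$ lie in $(0,\tfrac12)$. The class $\ord_{k,l}(\theta)$ is therefore determined by two pieces of data. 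The first is which pairs $x_{\mathbf j},x_{\mathbf j'}$ coincide, which never happens for irrational $\theta$. The second is the signs of the finitely many real numbers $L_{c,m}(\theta) := \sum_i c_i \varepsilon_{i+1} l_{[i]} + m$, where $|c_i|\le k$ and $m$ ranges over integers with $|m| \le k(l+1)$. So it suffices to understand these signs as functions on $\The$, expressed through $\theta_0,\ldots,\theta_l$.

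\textbf{Defining $\overline{\ord}$ and proving continuity.} I factor each form level by level. Write $L = m + c_0\varepsilon_1|\theta_0| + |\theta_0|\bigl(c_1\varepsilon_2|\theta_1| + |\theta_1|(\cdots)\bigr)$. At a parabolic level $n$ we have $\bar a_{n+1}=\infty$, and this corresponds to $\theta_n \to 0$ from the side $\varepsilon_{n+1}$. At such a level I treat $|\theta_n|$ as a positive infinitesimal. The sign of $L$ is then computed lexicographically: first take the sign of the truncated form obtained by setting $\theta_n=0$. If that truncation vanishes identically, take the sign of the coefficient of the infinitesimal, which is again a form of the same type in $\theta_{n+1},\ldots$ and is evaluated recursively. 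This process defines $\overline{\ord}_{k,l}(\ttheta)$ for every $\ttheta \in \TheCpt$, and it agrees with $\ord_{k,l}$ on irrationals.

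Continuity then amounts to the following claim: each such sign depends only on $\varepsilon_1,\ldots,\varepsilon_{l+1}$ and on $\min(\bar a_n, M)$ for $n\le l+1$, for some $M=M(k,l)$. I prove the claim by induction on $l$, starting from the innermost bracket. The inner bracket is bounded in absolute value by roughly $k$. When $\bar a_{n+1} > M$, the quantity $|\theta_n| < 1/M$ is small enough that the term $|\theta_n|(\cdots)$ cannot change the sign of a nonzero truncation, whose absolute value is bounded below by a constant depending only on $k$ and the finitely many earlier data. When the truncation is zero, the sign of $L$ is the sign of the inner bracket, and the induction hypothesis applies to that bracket. The values of $\theta_0,\ldots,\theta_{n-1}$ range over finitely many cylinder classes, so this lower bound is uniform. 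The claim gives local constancy on cylinder neighborhoods in $\TheCpt$, hence continuity of $\overline{\ord}$.

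\textbf{Injectivity.} I show that $\overline{\ord}$ recovers $\ttheta$. The side on which the point $\rotate_\theta^{[n]}(1)$ lies relative to $1$ gives $\varepsilon_{n+1}$. The position of the orbit $j\mapsto j\,\varepsilon_{n+1}l_{[n]}$, for $j\le k$, relative to the points $\pm l_{[n-1]}$ records how many copies of $l_{[n]}$ fit into $l_{[n-1]}$. By Proposition \ref{prop:ln} (1$_n$) this count is $b_{n+1}$ or $\bar a_{n+1}$, and it is read off exactly when it is at most $k$; the value $\infty$ is detected when it exceeds every $k$. Letting $k\to\infty$, the image determines every $(\varepsilon_n,\bar a_n)$, including the infinite entries, with the convention above at parabolic levels. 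Since $\overline{\ord}$ is a continuous injection from the compact space $\TheCpt$ into the Hausdorff space $\Omega$, it is an embedding.

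The main obstacle is the continuity claim in the degenerate case, where the truncated form vanishes exactly at a parabolic level, for instance when $m + c_0\varepsilon_1|\theta_0| = 0$ with $\theta_0$ rational. In that case the sign is decided by the next scale, and I must check that the lexicographic rule matches the limit from both sides. This is where the sign $\varepsilon_{n+1}$, distinguishing $0^+$ from $0^-$, becomes essential. I would first treat the case $l=1$ by hand, using the explicit form of $\gauss$ near $0$ from Lemma \ref{lem:r-shift}, and then run the induction.
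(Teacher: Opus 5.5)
You are right to reduce everything to the signs of the forms $L_{c,m}$: the cyclic order, not just coincidences, is what has to be controlled. But the continuity claim the proof rests on is false. Take $k=5$, $l=1$ and the cylinder $\varepsilon_1=\varepsilon_2=+1$, $\bar a_1=3$, $\bar a_2=2$. Then $q_{[1]}=b_1=4$, $\theta\in(3/11,2/7)$ and $\theta_1\in(1/3,1/2)$. The tuples $(2,0)$ and $(0,5)$ give the points $2\theta_0$ and $5\theta_0\theta_1$, both in $(0,1)$. Their difference $\theta_0(5\theta_1-2)$ has sign $-\varepsilon_3$. Equivalently, $2\theta$ and $20\theta$ collide at $\theta=5/18$, which is interior to this cylinder. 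So $\ord_{5,1}$ is not a function of $\varepsilon_1,\varepsilon_2,\bar a_1,\bar a_2$, whatever $M$ is. Your induction only examines levels with $\bar a_{n+1}>M$. At a level with small $\bar a_{n+1}$, $\theta_n$ sweeps an interval on which the form can change sign, so the cylinder depth you need depends on $\ttheta$, not only on $(k,l)$. The product topology permits this; your claim does not. Separately, distinct tuples do coincide for irrational $\theta$: once $k\ge b_1$, the tuples $(b_1,0,\dots)$ and $(0,1,\dots)$ have the same exponent $b_1q_{[0]}=q_{[1]}$. The paper's continuity argument proves local constancy of exactly this coincidence pattern, namely of the vanishing of $P(\theta)=\sum_s w_sq_{[s]}(\theta)$. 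It does so by writing tail sums as $Xq_{[t]}+Yq_{[t-1]}$ and using $q_{[t]}\ge(\bar a_t-1)q_{[t-1]}$ near parabolic levels.

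Your lexicographic rule also fails in the degenerate case you flag. The reason is that $\theta_0,\dots,\theta_{n-1}$ are not independent of $\theta_n$: $\theta_{n-1}=1/(\varepsilon_nb_n-\theta_n)$, so the truncation carries a first-order term in $\theta_n$ of the same size as the inner bracket. For instance, take $\varepsilon_1=+1$, $c=(b_1,-1)$ and $m=-1$. Proposition \ref{prop:ln} (1$_1$) gives $L_{c,m}=b_1l_{[0]}-\varepsilon_2l_{[1]}-1\equiv 0$; these are the coinciding tuples above. Yet at a point with $\bar a_2=\infty$ your rule returns the sign $-\varepsilon_2$.

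The clean repair is Proposition \ref{prop:ln} (3$_n$). Every $L_{c,m}$ equals $A\theta-B$ with integers $A=\sum_i c_iq_{[i]}$ and $B=\sum_i c_ip_{[i]}-m$. So $\ord_{k,l}(\theta)$ is determined by the exponents together with the position of $\theta$ relative to the finitely many rationals with denominator at most $k\sum_{i\le l}q_{[i]}$. At a parabolic level $N$ one has exactly $\theta=p_{[N]}/q_{[N]}+\varepsilon_{N+1}l_{[N]}/q_{[N]}$. The points then group into clusters of diameter $O(k\,l_{[N]})$, roughly $1/q_{[N]}$ apart. Two points in the same cluster differ by $l_{[N]}$ times the sum of an integer and a form of the same type in $\theta_{N+1},\theta_{N+2},\dots$. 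That is the correct recursion, with the cylinder depth chosen depending on $\ttheta$.

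The paper's argument controls only coincidences, not the cyclic order, so it does not supply this step either. Your injectivity argument is essentially the paper's Cases 1 and 2, and it is fine once the extension is correctly defined.
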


\begin{proof}
    The proof is split into two steps, namely the continuity and well-definedness of $\overline{\ord}$, and the injectivity of $\overline{\ord}$.
    Since $\TheCpt$ is compact and $\Omega$ is Hausdorff, these two steps are enough to prove that $\overline{\ord}$ is an embedding.

    For $\ttheta \in \TheCpt$ and $M,l \in \N$, define
    \[
        \bar{U}_{M,l}(\ttheta) := \left\{ \seq{(\varepsilon_n, \bar{a}_n)}_{n \geq 1} \in \TheCpt \: : \: 
        \begin{array}{c}
             \textnormal{for } n \in \{1,\ldots, l\}, \:\varepsilon_n = \varepsilon_n(\ttheta),  \\
             \bar{a}_n \geq M \textnormal{ if } \bar{a}_n = \infty, \textnormal{ and}  \\
             \bar{a}_n = \bar{a}_n(\ttheta) \textnormal{ if } \bar{a}_n < \infty 
        \end{array}
        \right\}.
    \]
    The set $\bar{U}_{M,l}(\ttheta)$ is a clopen neighborhood of $\ttheta$ in $\TheCpt$.
    Let $U_{M,l}(\ttheta)$ be the open subset of $\Irrat$ given by
    \[
        U_{M,l}(\ttheta) = \bar{\iota}^{-1}\left( \bar{U}_{M,l}(\ttheta) \right).
    \]
    Denote 
    \[
        m_{l}(\ttheta) := \max\{ \bar{a}_j \: : \: \bar{a}_j < \infty, 1\leq j \leq l\}.
    \]
    
    \noindent \underline{Claim:} For any $k$ and $l \geq 1$, there exists a sufficiently high positive integer $M$ depending only on $k$, $l$, and $m_l(\ttheta)$ such that $\ord_{k,l}$ is constant in $U_{M,l}(\ttheta)$. 
    This constant will be denoted by $\overline{\ord}_{k,l}(\ttheta)$. 

    \begin{proof}
        Consider two distinct tuples $(i_s)_{0\leq s \leq l}$ and $(j_s)_{0\leq s \leq l}$ in $\{0,\ldots,k\}^{l+1}$, and write $w_s = j_s - i_s$. 
        Since we are dealing with irrational numbers $\theta \in U_{M,l}(\ttheta)$, we conclude that the collision of the $(i_s)_s$-iterate and the $(j_s)_s$-iterate can be written arithmetically as
        \[
            \rotate_{\theta}^{i_0 q_{[0]}(\theta) + \ldots + i_l q_{[l]}(\theta)}(1) = \rotate_{\theta}^{j_0 q_{[0]}(\theta) + \ldots + j_l q_{[l]}(\theta)}(1)
            \qquad \textnormal{iff} \qquad
            P(\theta) := \sum_{s=0}^m w_s q_{[s]}(\theta) = 0.
        \]
        Our goal is reduced to showing that in $U_{M,l}(\ttheta)$, the function $P(\theta)$ is either non-vanishing or constantly zero.
        If $\bar{a}_j < \infty$ for all $j \in \{1,\ldots,l\}$, then $P(\theta)$ is constant in $U_{M,l}(\ttheta)$ and there is nothing further to be done.
        So let us suppose otherwise.

        Observe that for all $t \in \{1,\ldots, l\}$,
        \begin{align*}
            \left| \sum_{s=0}^{t-1} w_s q_{[s]}(\theta) \right| 
            \leq k \left| \sum_{s=0}^{t-1} q_{[s]}(\theta) \right| \leq k q_{[t]}.
        \end{align*}
        To keep track of the indices that give us infinity, we write
        \[
            \{ s \in \{1,\ldots,l\} \: : \: \bar{a}_s = \infty \} = \{t_1, t_2,\ldots, t_\nu\}
        \]
        where $t_j < t_{j+1}$.
        The recurrence relation allows us to write
        \begin{align*}
            \sum_{s=t_{\nu}-1}^{m} w_s q_{[s]}(\theta) 
            &= X_{\nu} q_{[t_\nu]}(\theta) + Y_{\nu} q_{[t_\nu-1]}(\theta).
        \end{align*}
        for some integers $X_{\nu}$ and $Y_{\nu}$ independent of $\theta \in U_{M,l}(\ttheta)$ whose absolute values are bounded above by some constant $C_\nu > 0$ depending only on $k$, $l$, and $m_l(\ttheta)$.
        Observe that 
        \begin{align*}
            |P(\theta)| 
            &\geq \left| \sum_{s=t_{\nu} -1}^{m} w_s q_{[s]}(\theta) \right| - \left| \sum_{s=0}^{t_{\nu} -2} w_s q_{[s]}(\theta) \right| \\
            &\geq |X_{\nu}| q_{[t_{\nu}]}(\theta) - |Y_{\nu}| q_{[t_{\nu}-1]}(\theta) - k q_{[t_{\nu}-1]}(\theta) \\
            &\geq \Big| |X_\nu| (\bar{a}_{t_\nu} (\theta)-1) - |Y_\nu| - k \Big| \: q_{[t_{\nu}-1]}(\theta).
        \end{align*}
        We will demand that $M \geq C_\nu + k + 2$.
        This would mean that if $U_{\nu} \neq 0$, then 
        \[
            \Big| |X_\nu| (\bar{a}_{t_\nu} (\theta)-1) - |Y_\nu| - k \Big| \geq \bar{a}_{t_\nu}(\theta) - 1 - C - k 
            \geq 1,
        \]
        which means that $P(\theta) \neq 0$ for all $\theta \in U_{M,l}(\ttheta)$.
        Let us suppose otherwise that $X_{\nu}=0$.
        In this case, we can go further down to index $t_{\nu -1}$ and write
        \[
            \displaystyle \sum_{s=t_{\nu-1} -1}^m w_s q_{[s]}(\theta) = X_{\nu-1} q_{[t_{\nu-1}]}(\theta) + Y_{\nu-1} q_{[t_{\nu-1}-1]}(\theta),
        \]
        where again, $X_{\nu-1}$ and $Y_{\nu-1}$ are integers that are independent of $\theta \in U_{M,l}(\theta)$ whose absolute values are bounded above by some constant $C_{\nu-1} > 0$ depending only on $k$, $l$, and $m_l(\ttheta)$.
        By demanding again that $M \geq C_{\nu-1} + k + 2$, if $X_{\nu-1} \neq 0$, we have that $P(\theta)$ is non-vanishing, and if otherwise, we can go down to the next index and repeat the argument.

        If the inductive argument above survives, we arrive at the situation where 
        \[
            \displaystyle \sum_{s=t_1-1}^m w_s q_{[s]}(\theta) = Y_{1} q_{[t_{1}-1]}(\theta)
        \]
        where $Y_1$ is an integer independent of $\theta \in U_{M,l}(\theta)$.
        In this final case, the whole sum $P(\theta)$ would have to be a constant independent of $\theta \in U_{M,l}(\theta)$, and we are done.
    \end{proof}
    
    The claim above implies that $\overline{\ord}(\ttheta) = \left\{ \overline{\ord}_{k,l}(\ttheta) \right\}_{k,l \geq 1}$ is a well-defined continuous extension of $\ord$.
    Next, we need to show that $\overline{\ord}$ is injective.
    Pick two distinct elements $\ttheta = \seq{ (\varepsilon_n, \bar{a}_n }_{n \geq 1}$ and $\ttheta' = \seq{ (\varepsilon'_n, \bar{a}'_n }_{n \geq 1}$ of $\TheCpt$.
    There are a few cases.
    \vspace{0.1in}

    \noindent \underline{Case 1:} $\varepsilon_l \neq \varepsilon'_l$ for some minimal $l \geq 1$. \\[0.05in]
    Let $\theta, \theta' \in \Irrat$ be irrationals sufficiently close to $\ttheta$, $\ttheta'$ respectively such that $\ord_{1,l}(\theta) = \overline{\ord}_{1,l}(\ttheta)$ and $\ord_{1,l}(\theta') = \overline{\ord}_{1,l}(\ttheta')$.
    In this case, clearly the order of the points $\left\{ 1, \rotate_{\theta}^{[l]}(1) \right\}$ is different from the order of $\left\{ 1, \rotate_{\theta'}^{[l]}(1) \right\}$.
    \vspace{0.1in}

    \noindent \underline{Case 2:} $\varepsilon_n = \varepsilon'_n$ for all $n \geq 1$, and $\bar{a}_l \neq \bar{a}'_l$ for some minimal $l \geq 1$. \\[0.05in]
    Without loss of generality, assume that $\bar{a}_l < \bar{a}'_l$ and let $k = \bar{a}_l$.
    Let $\theta$ and $\theta'$ be irrationals sufficiently close to $\ttheta$ and $\ttheta'$ such that $\ord_{k+1,l}(\theta) = \overline{\ord}_{k+1,l}(\ttheta)$ and $\ord_{k+1,l}(\theta') = \overline{\ord}_{k+1,l}(\ttheta')$.
    Then, the order of points $\left\{ \rotate_{\theta}^{q_{[l-2]}(\theta)+j q_{[l-1]}(\theta)}(1) \right\}_{0 \leq j \leq k+1}$ is different from the order of points $\left\{\rotate_{\theta'}^{q_{[l-2](\theta')}+j q_{[l-1]}(\theta')}(1) \right\}_{0 \leq j \leq k+1}$.
\end{proof}

\subsection{Universality of $\TheCpt$}
\label{ss:universality}

The following result is a general theorem in topological dynamics.

\begin{theorem}[Minimal dynamical compactification]
\label{thm:minimal-compactification}
    Consider a topological dynamical system $f: X \to X$ and an embedding $h: X \to Y$ to a compact Hausdorff space $Y$.
    There exists a unique minimal compactification $(\hat{\iota}, \hat{X}, \hat{f})$ of $(X,f)$ such that $h$ admits an $\hat{\iota}$-extension $\hat{h}$ in the following sense.
    Let $(\tilde{\iota}, \tilde{X}, \tilde{f})$ be any compactification of $(X,f)$ such that $h$ admits an $\hat{\iota}$-extension $\tilde{h}$.
    There exists a continuous surjective map $\phi: \tilde{X} \to \hat{X}$ such that
    \begin{align*}
        \phi \circ \tilde{\iota} = \hat{\iota}, \qquad
        \phi \circ \tilde{f} = \hat{f} \circ \phi, \qquad
        \tilde{h} = \hat{h} \circ \phi.
        \tag{$\dagger$}
    \end{align*}
\end{theorem}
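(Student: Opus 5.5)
The plan is to build $\hat{X}$ explicitly as the closure of the orbit-graph of $h$. Consider the compact Hausdorff space $Y^{\N_0} = \prod_{n\geq 0} Y$ with the product topology, and define
\[
    \hat{\iota}: X \to Y^{\N_0}, \qquad \hat{\iota}(x) = \big( h(x), h(f(x)), h(f^2(x)), \ldots \big).
\]
Set $\hat{X} := \overline{\hat{\iota}(X)}$, which is compact Hausdorff by Tychonoff. Take $\hat{f}$ to be the restriction of the shift $\shift: (y_n)_{n\geq 0} \mapsto (y_{n+1})_{n\geq 0}$. The shift is continuous and satisfies $\shift\circ\hat{\iota} = \hat{\iota}\circ f$, so it maps $\hat{\iota}(X)$ into itself and hence $\hat{X}$ into itself. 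Take $\hat{h}$ to be the projection onto the $0$-th coordinate, so that $\hat{h}\circ\hat{\iota} = h$.

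Next, check that $\hat{\iota}$ is an embedding. It is continuous because each coordinate $h\circ f^n$ is continuous. It is injective because its $0$-th coordinate $h$ is injective. The inverse $\hat{\iota}(X)\to X$ equals $h^{-1}\circ \mathrm{pr}_0$ restricted to $\hat{\iota}(X)$, and this is continuous since $h$ is an embedding. Hence $(\hat{\iota},\hat{X},\hat{f})$ is a compactification respecting $h$. (I read the hypothesis on $(\tilde{\iota},\tilde{X},\tilde{f})$ as requiring a $\tilde{\iota}$-extension $\tilde{h}$; the statement's ``$\hat{\iota}$'' there is presumably a typo. I would also take $\tilde X$ to be Hausdorff, as otherwise closures may fail to behave.)

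For the universal property, define
\[
    \phi: \tilde{X} \to Y^{\N_0}, \qquad \phi(z) = \big(\tilde{h}(\tilde{f}^n(z))\big)_{n\geq 0}.
\]
This map is continuous coordinatewise. We have $\phi\circ\tilde{\iota} = \hat{\iota}$ because $\tilde{h}\circ\tilde{f}^n\circ\tilde{\iota} = \tilde{h}\circ\tilde{\iota}\circ f^n = h\circ f^n$. Moreover $\phi\circ\tilde{f} = \shift\circ\phi$ holds directly, and $\mathrm{pr}_0\circ\phi = \tilde{h}$, which gives ($\dagger$). For surjectivity onto $\hat X$: the image $\phi(\tilde X)$ is compact, hence closed, and it contains $\hat{\iota}(X)$, so it contains $\hat X$. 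Conversely, $\phi(\tilde X) = \phi(\overline{\tilde\iota(X)}) \subseteq \overline{\phi(\tilde\iota(X))} = \hat X$. So $\phi$ maps $\tilde{X}$ onto $\hat{X}$.

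Uniqueness and minimality remain. If $(\hat{\iota}',\hat{X}',\hat{f}')$ is another minimal compactification with this universal property, the universal properties give surjections $\phi:\hat X'\to\hat X$ and $\phi':\hat X\to\hat X'$ intertwining the embeddings. Their composites restrict to the identity on the dense images, so by continuity and Hausdorffness they are the identity. Thus $\phi$ is a homeomorphism, which is uniqueness up to isomorphism. The only delicate points I anticipate are bookkeeping: fixing the Hausdorff hypothesis on compactifications, and noting that $\phi$ is uniquely determined by $\phi\circ\tilde\iota=\hat\iota$ through density. There is no real analytic obstacle.
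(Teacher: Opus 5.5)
Your proposal is correct and follows essentially the same route as the paper. Both embed $X$ into $Y^{\N}$ via $x\mapsto \bigl(h(f^n(x))\bigr)_{n\geq 0}$, take the closure with the shift and the zeroth-coordinate projection, and obtain $\phi$ as $z\mapsto \bigl(\tilde h(\tilde f^n(z))\bigr)_{n\geq 0}$. You also fill in details the paper leaves implicit: the compact-image-is-closed argument for surjectivity, the uniqueness-up-to-isomorphism argument, and reading the hypothesis's $\hat\iota$ as a typo for $\tilde\iota$, which matches the paper's proof.
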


The reference embedding $h$ is important.
Without it, the theorem is false.
This result is likely part of the folklore of topological dynamics. However, in the absence of an explicit reference in the standard literature, we include its proof here for the reader's convenience. 
The proof follows a standard topological construction via an embedding into the Hilbert cube.

\begin{proof}
    Consider the compact sequence space $Y^{\N}$, equipped with the infinite product topology.
    Let us embed $X$ into $Y^{\N}$ via the map
    \[
        \Phi: X \to Y^{\N}, \qquad \Phi(\theta) 
        = \seq{ h \circ f^n(x) }_{n \geq 0}. 
    \]
    Since $h$ is an embedding and $Y$ is Huasdorff, then $\Phi$ is a homeomorphism onto its image, i.e. again an embedding.
    It induces a conjugacy between $f: X \to X$ and the shift map on the image $\Phi(X)$.
    Let us set 
    \begin{itemize}
        \item $\hat{X}$ to be the closure of $\Phi(X)$ as a subset of $Y^{\N}$,
        \item $\hat{\iota}: X \to \hat{Y}$ to be equal to $\hat{\iota}(x) = \Phi(x)$,
        \item $\hat{f}: \hat{X} \to \hat{X}$ to be the shift map, and
        \item $\hat{h} : \hat{X} \to Y$ to be the projection to the first coordinate: $\hat{h} ( \seq{ x_n }_{n\geq 0} ) = x_0$.
    \end{itemize}
    It is clear that $(\hat{\iota}, \hat{X}, \hat{f})$ is a compactification of $(X, f)$ and $\hat{h}$ is an $\hat{\iota}$-extension of $h$.

    Next, let us prove the universality part.
    Pick an arbitrary compactification $(\tilde{\iota}, \tilde{X}, \tilde{f})$ such that $h$ admits an $\tilde{\iota}$-extension $\tilde{h}$.
    Consider the map
    \[
        \phi : \tilde{X} \to Y^{\N}, \qquad 
        \phi(z) = \seq{ \tilde{h} \circ \tilde{f}^n(z) }_{n \geq 0}. 
    \]
    Observe that $\phi \circ \tilde{\iota} = \Phi$ and so the image $\phi(\tilde{\iota}(X))$ is dense in $\hat{X}$.
    Since $\tilde{\iota}(X)$ is dense in $\tilde{X}$ and $\phi$ is continuous, then the image of $\phi$ has to be equal to $\hat{X}$.
    Therefore, $\phi$ induces a continuous surjective map $\phi: \tilde{X} \to \hat{X}$.
    It is straightforward to check that the three equations in ($\dagger$) hold.
\end{proof}

\begin{theorem}
\label{thm:universal-eta}
    The unique minimal compactification of $(\Irrat, \gauss)$ that respects the embedding $\eta: \Irrat \to [0,1]$ is homeomorphic to $(\bar{\iota}, \TheCpt, \bar{\gauss})$.
\end{theorem}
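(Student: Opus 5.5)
We will use the explicit model from the proof of Theorem \ref{thm:minimal-compactification}. Let $\Phi:\Irrat\to[0,1]^{\N}$ be $\Phi(\theta)=\seq{\eta(\gauss^n\theta)}_{n\geq 0}$, and let $\hat X=\overline{\Phi(\Irrat)}$ with the shift. The plan is to show that the map
\[
\Psi:\TheCpt\to[0,1]^{\N},\qquad \Psi(\ttheta)=\seq{\bar\eta(\shift^n\ttheta)}_{n\geq0},
\]
is a homeomorphism onto $\hat X$ that conjugates $\shift$ to the shift and satisfies $\Psi\circ\bar\iota=\Phi$.

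\textbf{Step 1: $\Psi$ is continuous and onto $\hat X$.} Continuity follows from Proposition \ref{prop:compact-eta}, since $\bar\eta$ is continuous and $\shift$ is continuous. The identities $\bar\eta\circ\shift^n\circ\bar\iota=\eta\circ\gauss^n$ give $\Psi\circ\bar\iota=\Phi$. The image $\Psi(\TheCpt)$ is compact and contains the dense set $\Phi(\Irrat)$, so it equals $\hat X$. The intertwining $\Psi\circ\shift=\text{shift}\circ\Psi$ is immediate from the definition.

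\textbf{Step 2: $\Psi$ is injective.} Since $\TheCpt$ is compact and $[0,1]^{\N}$ is Hausdorff, injectivity will make $\Psi$ a homeomorphism onto $\hat X$. This step carries all the content. It suffices to show that the first symbol $(\varepsilon_1,\bar a_1)$ of $\ttheta$ is determined by the pair $\bar\eta(\ttheta)$, $\bar\eta(\shift\ttheta)$. Then applying this to $\shift^n\ttheta$ recovers every symbol.

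Write $x=\bar\eta(\ttheta)$ and $y=\bar\eta(\shift\ttheta)$, and split into two cases.

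\emph{Case $x\in(0,1)$.} Then $\bar a_1<\infty$. The sign satisfies $\varepsilon_1=+1$ iff $x<1/2$; the value $x=1/2$ cannot occur, because $|[\varepsilon_1b_1,\dots]_-|=1/2$ forces $b_1=2$, whence $\bar a_1=2$ and $\varepsilon_1\varepsilon_2=-1$, so the value is $-1/2$ and $x=1/2$ would require $\varepsilon_1=-1$, which is consistent. I therefore treat $x=1/2$ as $\varepsilon_1=-1$ and then check the formulas below. The sign $\varepsilon_2$ is read from $y$: one has $y\in[0,1/2]$ iff $\varepsilon_2=+1$, and the endpoint cases $y\in\{0,1\}$ exactly encode $\varepsilon_2$ when $\bar a_2=\infty$. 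With $\varepsilon_1,\varepsilon_2$ known, $b_1$ is the integer with $\gauss$-relation $\theta_1=-1/\theta_0+\varepsilon_1b_1$, extended to enriched rationals. Here I must verify that this relation persists for $\bar\eta$, meaning that $\bar\eta(\shift\ttheta)\equiv -1/(\text{value of }\ttheta)+\varepsilon_1b_1 \pmod 1$ under the stated convention. This holds by the recursive definition of $[\cdot]_-$, including when $N=1$, where the tail value is $0$. Hence $\bar a_1=b_1-(1+\varepsilon_1\varepsilon_2)/2$.

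\emph{Case $x\in\{0,1\}$.} Then $\bar a_1=\infty$, and $\varepsilon_1=+1$ iff $x=0$, since the convention gives $\frac{1-\varepsilon_1}{2}+0$. Conversely, $x\in\{0,1\}$ forces $\bar a_1=\infty$, because finite $\bar a_1$ gives $|[\dots]_-|\geq 1/(b_1+1)>0$. Note that the value $1/(\bar a_1+\dots)$ is bounded away from zero.

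\textbf{Step 3: conclusion.} Thus $\Psi$ identifies $(\bar\iota,\TheCpt,\bar\gauss)$ with $(\hat\iota,\hat X,\hat f)$, and $\bar\eta$ corresponds to the first-coordinate projection $\hat h$. Uniqueness of the minimal compactification up to the isomorphism in Theorem \ref{thm:minimal-compactification} then gives the claim.

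The main obstacle is the careful boundary bookkeeping in Step 2. Specifically, I need the identity $\bar\eta(\shift\ttheta)\equiv-1/v(\ttheta)+\varepsilon_1b_1$ to hold at enriched rationals, and I need to confirm that the values $0$, $1$ and $\pm1/2$ never create a collision between different symbol pairs. I would first check this directly on the low-level cylinders $N=1$ and $N=2$, which is where these ambiguities could arise.
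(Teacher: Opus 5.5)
Your strategy is the paper's. You show that $\ttheta\mapsto\seq{\bar\eta(\shift^n\ttheta)}_{n\ge 0}$ is injective by checking that the first symbol $(\varepsilon_1,\bar a_1)$ is determined by $x=\bar\eta(\ttheta)$ and $y=\bar\eta(\shift\ttheta)$, and then shifting. The paper phrases the same thing as: shift so the first symbols differ, then $x_0=x_0'$ forces $x_1\neq x_1'$.

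\textbf{The gap.} Step 2 goes wrong exactly at the boundary case you flagged as the main obstacle. You assert that $x=1/2$ forces $\varepsilon_1=-1$, reasoning that $b_1=2$ gives $\varepsilon_1\varepsilon_2=-1$ ``so the value is $-1/2$''. But the sign of $[\varepsilon_1 b_1,\ldots]_-$ is $\varepsilon_1$, not $\varepsilon_1\varepsilon_2$. Concretely:
\begin{itemize}
\item $\ttheta=\seq{(+1,2),(-1,\infty),\ldots}$ has $b_1=2$, value $+1/2$, and $\bar\eta(\ttheta)=1/2$;
\item $\ttheta'=\seq{(-1,2),(+1,\infty),\ldots}$ has value $-1/2$, and $\bar\eta(\ttheta')=1-1/2=1/2$.
\end{itemize}
So $\varepsilon_1$ is \emph{not} a function of $x$ alone. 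Your rule ``treat $x=1/2$ as $\varepsilon_1=-1$'' misdecodes $\ttheta$, and Step 2 as written does not establish injectivity. This collision is precisely the first case of the paper's proof, and only the second coordinate resolves it. Indeed, $x=1/2$ forces $\bar a_1=2$, $\bar a_2=\infty$ and $\varepsilon_2=-\varepsilon_1$. Hence $y=(1-\varepsilon_2)/2=(1+\varepsilon_1)/2$: that is, $y=1$ when $\varepsilon_1=+1$ and $y=0$ when $\varepsilon_1=-1$.

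\textbf{Two smaller repairs.}
\begin{itemize}
\item Your rule ``$y\in[0,1/2]$ iff $\varepsilon_2=+1$'' fails at $y=1/2$ for the same reason. This turns out to be harmless. With $v=x-(1-\varepsilon_1)/2$ one has the exact identity $\varepsilon_1\bar a_1=1/v+y-(1+\varepsilon_1)/2$, also when $\bar a_2=\infty$, and $\varepsilon_2$ cancels from it. You should argue this rather than rely on the false rule.
\item A congruence modulo $1$ cannot pin down the integer $b_1$. What the recursive definition actually gives, with tail value $0$ when $\bar a_2=\infty$, is the exact identity $w=\varepsilon_1b_1-1/v$ for the tail value $w=y-(1-\varepsilon_2)/2$.
\end{itemize}
With these fixes, your endpoint reading $y\in\{0,1\}$ correctly separates the remaining collision, which is the paper's second case. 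There the value $\varepsilon_1/k$ arises both from $\bar a_1=k-1,\ \varepsilon_2=\varepsilon_1$ and from $\bar a_1=k,\ \varepsilon_2=-\varepsilon_1$, in each case with $\bar a_2=\infty$.
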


\begin{proof}
    Consider the minimal compactification $(\hat{\iota}, \hat{\Irrat}, \hat{\gauss})$ of $(\Irrat, \gauss)$ respecting $\eta$ constructed in Theorem \ref{thm:minimal-compactification}. Let $\hat{\eta}$ be the $\hat{\iota}$-extension of $\eta$.
    From Proposition \ref{prop:compact-eta}, we know that the compactification $(\bar{\iota}, \TheCpt, \bar{\gauss})$ respects $\eta$ with $\bar{\iota}$-extension $\bar{\eta}$.
    Consider the continuous surjective map $\phi: \TheCpt \to \hat{\Irrat}$ from Theorem \ref{thm:minimal-compactification}.
    It can be written explicitly as
    \[
        \phi: \TheCpt \to \hat{\Irrat} \subset [0,1]^{\N}, \qquad \phi(\ttheta) = \seq{ \bar{\eta} \circ \bar{\gauss}^n(\ttheta) }_{n \geq 0}.
    \]
    More explicitly, for every element $\ttheta = \seq{ (\varepsilon_n,\bar{a}_n) }_{n\geq 1}$ of $\TheCpt$, we have $\phi(\ttheta) = \seq{ x_n }_{n\geq 0}$ where 
    for all $n \geq 1$,
    \[
        x_{n-1} = \frac{1-\varepsilon_{n}}{2} + [\varepsilon_{n} b_{n}(\ttheta), \varepsilon_{n+1} b_{n+1}(\ttheta), \ldots]_-.
    \]
    We need to show that $\phi$ is a homeomorphism.
    Since $\TheCpt$ is Hausdorff and $\phi$ is surjective, it remains to show that $\phi$ is injective.
    
    Suppose for a contradiction that $\phi$ is not injective. 
    There exist two distinct elements 
    $\ttheta = \seq{ (\varepsilon_n,\bar{a}_n) }_{n\geq 1}$ 
    and 
    $\ttheta' = \seq{ (\varepsilon'_n, \bar{a}'_n ) }_{n\geq 1}$ of $\TheCpt$ such that $\seq{ x_n }_{n \geq 0} = \phi(\ttheta)$ is equal to $\seq{ x'_n }_{n\geq 0} = \phi(\ttheta')$.
    After shifting, we will assume without loss of generality that $(\varepsilon_1, \bar{a}_1) \neq (\varepsilon_2, \bar{a}_2)$. 
    There are two cases.
    \begin{itemize}
        \item Suppose $\varepsilon_1 \neq \varepsilon'_1$. 
        Without loss of generality, suppose $\varepsilon_1 = +$ and $\varepsilon_2 = -$.
        Since $x_0=x'_0$, then both $x_0$ and $x'_0$ must be equal to $\frac{1}{2}$.
        Therefore, $\bar{a}_1 = \bar{a}'_1=2$, $(\varepsilon_2, \bar{a}_2) = (-,\infty)$, and $(\varepsilon'_2, \bar{a}'_2) = (+,\infty)$.
        But this would imply that $x_1 = 1$ and $x'_1 = 0$, which are not equal.
        \item Suppose $\varepsilon_1 = \varepsilon'_1$ and $\bar{a}_1 \neq \bar{a}'_1$.
        Without loss of generality, suppose $\varepsilon_1 = +$ and $\bar{a}_1 < \bar{a}'_1$.
        Since $x_0 = x'_0$, then both $x_0$ and $x'_0$ must be equal to $\frac{1}{k}$ for some integer $k \geq 3$.
        Therefore, $\bar{a}_1 = k-1$, $\bar{a}'_1 = k$, $(\varepsilon_2, \bar{a}_2) = (+,\infty)$, and $(\varepsilon_1, \bar{a}_1) = (-,\infty)$.
        But this would again imply that $x_1 = 0$ and $x'_1 = 1$, which are not equal. \qedhere
    \end{itemize}
\end{proof}

\begin{theorem}
\label{thm:universal-ord}
    The unique minimal compactification of $(\Irrat, \gauss)$ that respects the dynamical embedding is homeomorphic to $(\bar{\iota}, \TheCpt, \bar{\gauss})$.
\end{theorem}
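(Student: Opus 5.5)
We will argue as in the proof of Theorem \ref{thm:universal-eta}, with $\eta$ replaced by $\ord$. Let $(\hat{\iota}, \hat{\Irrat}, \hat{\gauss})$ be the minimal compactification of $(\Irrat,\gauss)$ respecting $\ord$, as constructed in Theorem \ref{thm:minimal-compactification}. By Proposition \ref{prop:compact-ord}, the compactification $(\bar{\iota}, \TheCpt, \bar{\gauss})$ respects $\ord$ with $\bar{\iota}$-extension $\overline{\ord}$. The universal property of Theorem \ref{thm:minimal-compactification} then gives a continuous surjection
\[
\phi: \TheCpt \to \hat{\Irrat} \subset \Omega^{\N}, \qquad \phi(\ttheta) = \seq{ \overline{\ord}\circ \bar{\gauss}^n(\ttheta) }_{n\geq 0},
\]
which intertwines $\bar{\gauss}$ with $\hat{\gauss}$ and $\bar{\iota}$ with $\hat{\iota}$.

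Since $\TheCpt$ is compact and $\hat{\Irrat}$ is Hausdorff (it is a subspace of $\Omega^{\N}$), it suffices to prove that $\phi$ is injective. This is immediate from Proposition \ref{prop:compact-ord}. That proposition states that $\overline{\ord}$ is itself injective on $\TheCpt$, and $\overline{\ord}$ is exactly the zeroth coordinate of $\phi$. So if $\phi(\ttheta) = \phi(\ttheta')$, then $\overline{\ord}(\ttheta) = \overline{\ord}(\ttheta')$, hence $\ttheta = \ttheta'$. The shifted coordinates are not even needed here, whereas they were needed for $\eta$, because $\overline{\ord}$ already separates points. Thus $\phi$ is a continuous bijection from a compact space to a Hausdorff space, hence a homeomorphism, and it conjugates the two compactifications.

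All of the difficulty is therefore concentrated in the injectivity of $\overline{\ord}$ from Proposition \ref{prop:compact-ord}, which we take as given. If one wanted to double-check it, the delicate point is the case where $\bar{a}_l$ and $\bar{a}'_l$ differ and one of them equals $\infty$. We would check this case explicitly. With $k = \bar{a}_l < \infty$, irrationals near $\ttheta$ have exactly $k$ iterates of $\rotate^{[l-1]}$ fitting in the relevant gap. Irrationals near $\ttheta'$ have at least $k+1$ such iterates, because their $\bar{a}_l$ is arbitrarily large. Hence $\ord_{k+1,l}$ already distinguishes the two, and the remaining ingredients are formal.
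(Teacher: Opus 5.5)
Your proposal is correct and is essentially the paper's proof: both take the continuous surjection $\phi$ supplied by Theorem~\ref{thm:minimal-compactification}, observe that its zeroth coordinate is $\overline{\ord}$, and conclude injectivity (hence a homeomorphism, since $\TheCpt$ is compact and the target is Hausdorff) directly from the injectivity of $\overline{\ord}$ in Proposition~\ref{prop:compact-ord}. Your additional remarks are consistent with the paper but not needed for the argument. These are why the shifted coordinates are unnecessary here, unlike for $\eta$, and the heuristic check of the case $\bar{a}_l \neq \bar{a}'_l$.
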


\begin{proof}
    Consider the minimal compactification $(\hat{\iota}, \hat{\Irrat}, \hat{\gauss})$ of $(\Irrat, \gauss)$ respecting $\ord$ constructed in Theorem \ref{thm:minimal-compactification}. 
    Let $\hat{\ord}$ be the $\hat{\iota}$-extension of $\ord$.
    From Proposition \ref{prop:compact-ord}, we know that the compactification $(\bar{\iota}, \TheCpt, \bar{\gauss})$ respects $\ord$ with $\bar{\iota}$-extension $\bar{\ord}$.
    Consider the continuous surjective map $\phi: \TheCpt \to \hat{\Irrat}$ from Theorem \ref{thm:minimal-compactification}.
    It can be written explicitly as
    \[
        \phi: \TheCpt \to \hat{\Irrat} \subset \Omega^{\N}, \qquad \phi(\ttheta) = \seq{ \overline{\ord} \circ \bar{\gauss}^n(\ttheta) }_{n \geq 0}.
    \]
    To prove that $\phi$ is a homeomorphism, we just need to show that $\phi$ is injective.
    But this follows directly from the fact that $\overline{\ord}$ is an embedding (Proposition \ref{prop:compact-ord}).
\end{proof}

The three theorems above imply that that taking $\ord$ into account is equivalent to taking $\eta$ into account.

\begin{corollary}
    Every compactification of $(\Irrat, \gauss)$ that respects $\eta: \Irrat \to [0,1]$ also respects $\ord: \Irrat \to \Omega$, and vice versa.
\end{corollary}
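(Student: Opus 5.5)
The plan is to route everything through the minimal compactification of Theorem \ref{thm:minimal-compactification}, using Theorems \ref{thm:universal-eta} and \ref{thm:universal-ord} to identify both minimal objects with $(\bar{\iota}, \TheCpt, \bar{\gauss})$. Let $(\tilde{\iota}, \tilde{X}, \tilde{f})$ be a compactification of $(\Irrat, \gauss)$ that respects $\eta$, with $\tilde{\iota}$-extension $\tilde{\eta}$. Theorem \ref{thm:minimal-compactification}, applied with $h = \eta$, gives a continuous surjection $\phi : \tilde{X} \to \hat{\Irrat}$ with $\phi \circ \tilde{\iota} = \hat{\iota}$. By Theorem \ref{thm:universal-eta}, $\hat{\Irrat}$ is identified with $\TheCpt$. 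Concretely, this identification is the homeomorphism $\TheCpt \to \hat{\Irrat}$, $\ttheta \mapsto \seq{\bar{\eta}\circ\bar{\gauss}^n(\ttheta)}_{n\ge 0}$, and under it $\hat{\iota}$ corresponds to $\bar{\iota}$. So we get a continuous map $\Psi : \tilde{X} \to \TheCpt$ satisfying $\Psi \circ \tilde{\iota} = \bar{\iota}$.

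Next, define $\tilde{\ord} := \overline{\ord} \circ \Psi : \tilde{X} \to \Omega$. It is continuous, being a composition of continuous maps; $\overline{\ord}$ is continuous by Proposition \ref{prop:compact-ord}. It also satisfies
\[
\tilde{\ord} \circ \tilde{\iota} = \overline{\ord} \circ \bar{\iota} = \ord,
\]
so $\tilde{\ord}$ is a $\tilde{\iota}$-extension of $\ord$. Hence $(\tilde{\iota}, \tilde{X}, \tilde{f})$ respects $\ord$.

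The converse is symmetric. Start from a compactification respecting $\ord$, apply Theorem \ref{thm:minimal-compactification} with $h = \ord$, and use Theorem \ref{thm:universal-ord} to get $\Psi' : \tilde{X} \to \TheCpt$ with $\Psi' \circ \tilde{\iota} = \bar{\iota}$. Then $\bar{\eta} \circ \Psi'$ is a continuous $\tilde{\iota}$-extension of $\eta$, by Proposition \ref{prop:compact-eta}.

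The only step needing care is that the identification $\hat{\Irrat} \cong \TheCpt$ intertwines $\hat{\iota}$ with $\bar{\iota}$. This holds because the map $\phi$ built in the proofs of Theorems \ref{thm:universal-eta} and \ref{thm:universal-ord} satisfies $\phi \circ \bar{\iota} = \hat{\iota}$ by ($\dagger$). Composing $\Psi$ with the inverse of that homeomorphism then gives exactly $\Psi \circ \tilde{\iota} = \bar{\iota}$. Everything else is formal.
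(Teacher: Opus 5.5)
Your proposal is correct and matches the paper's proof. Both obtain a continuous map $\tilde{X}\to\TheCpt$ intertwining $\tilde{\iota}$ with $\bar{\iota}$ from Theorem \ref{thm:minimal-compactification} combined with Theorem \ref{thm:universal-eta} (resp.\ Theorem \ref{thm:universal-ord}), and then compose it with $\overline{\ord}$ (resp.\ $\bar{\eta}$). Your explicit check that the identification $\hat{\Irrat}\cong\TheCpt$ carries $\hat{\iota}$ to $\bar{\iota}$ spells out a step the paper uses without comment.
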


\begin{proof}
    Let $(\tilde{\iota}, \tilde{\Irrat}, \tilde{\gauss})$ be a compactification such that $\eta$ admits an $\tilde{\iota}$-extension $\tilde{\eta}$.
    By Theorems \ref{thm:minimal-compactification} and \ref{thm:universal-eta}, there exists a continuous surjective map $\phi: \tilde{\Irrat} \to \TheCpt$ such that
    \[
        \phi \circ \tilde{\iota} = \bar{\iota}, \quad
        \phi \circ \tilde{\gauss} = \bar{\gauss} \circ \phi, \quad
        \tilde{\eta} = \bar{\eta} \circ \phi.
    \]
    Let us define $\widetilde{\ord}: \tilde{\Irrat}\to \Omega$ by $\widetilde{\ord} = \overline{\ord} \circ \phi$.
    It satisfies 
    \[
        \widetilde{\ord} \circ \tilde{\iota} = \overline{\ord} \circ \phi \circ \tilde{\iota} = \bar{\ord} \circ \bar{\iota} = \ord.
    \]
    Hence, $\widetilde{\ord}$ is indeed an $\tilde{\iota}$-extension of $\ord$.
    The converse is analogous.
\end{proof}

Altogether the results above imply Theorem \ref{main-thm-2}.
In contrast to the previous two theorems, we have:

\begin{proposition}
\label{prop:multiplier-failure}
    The unique minimal compactification of $(\Irrat, \gauss)$ that respects the multiplier map $\mu: \Irrat \to \T$ is \underline{not} homeomorphic to $(\bar{\iota}, \TheCpt, \bar{\gauss})$.
    In particular, the map
    \[
        \TheCpt \to \T^{\N}, \qquad \ttheta \mapsto \seq{ \bar{\mu}(\shift^n(\ttheta))}_{n\geq 0}
    \]
    is not injective.
\end{proposition}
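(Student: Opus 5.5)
The idea is to exploit the one piece of information that $\mu$ discards and $\eta$ keeps: $\mu$ does not distinguish the two sides $0^+$ and $0^-$ of the origin. I first prove the second ("in particular") claim by exhibiting two distinct points of $\TheCpt$ with the same $\bar{\mu}$-itinerary. Then I deduce the first claim from the universal property in Theorem \ref{thm:minimal-compactification}.

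For the non-injectivity, fix any tail $\seq{(\varepsilon_n,\bar{a}_n)}_{n\geq 2} \in \overline{\Sigma}^{\N_{\geq 2}}$. Define
\[
\ttheta := \seq{(+1,\infty),(\varepsilon_2,\bar{a}_2),(\varepsilon_3,\bar{a}_3),\ldots}, \qquad \ttheta' := \seq{(-1,\infty),(\varepsilon_2,\bar{a}_2),(\varepsilon_3,\bar{a}_3),\ldots}.
\]
These differ only in the first sign, so $\ttheta \neq \ttheta'$. Both have smallest parabolic level $N=0$. By the explicit formula for $\bar{\mu}$ in the proof of its existence, this gives $\bar{\mu}(\ttheta) = 0 = \bar{\mu}(\ttheta')$, regardless of $\varepsilon_1$. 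For $n \geq 1$ we have $\shift^n(\ttheta) = \shift^n(\ttheta')$, since the two sequences share the same tail. Hence $\seq{\bar{\mu}(\shift^n\ttheta)}_{n\geq 0} = \seq{\bar{\mu}(\shift^n\ttheta')}_{n\geq 0}$, and the displayed map $\TheCpt \to \T^{\N}$ is not injective.

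For the first claim, let $(\hat{\iota},\hat{\Irrat},\hat{\gauss})$ be the minimal compactification respecting $\mu$ from Theorem \ref{thm:minimal-compactification}, realized inside $\T^{\N}$. Since $(\bar{\iota},\TheCpt,\bar{\gauss})$ respects $\mu$ via $\bar{\mu}$, the theorem gives a continuous surjection $\phi: \TheCpt \to \hat{\Irrat}$ with $\phi\circ\bar{\iota} = \hat{\iota}$. As in the proofs of Theorems \ref{thm:universal-eta} and \ref{thm:universal-ord}, this map is exactly $\phi(\ttheta) = \seq{\bar{\mu}\circ\bar{\gauss}^n(\ttheta)}_{n\geq 0}$, which we just showed is not injective.

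Now suppose, for contradiction, that there were an isomorphism of compactifications $\psi: \hat{\Irrat} \to \TheCpt$, meaning a homeomorphism with $\psi\circ\hat{\iota} = \bar{\iota}$ (which automatically intertwines the dynamics). Then $\psi\circ\phi\circ\bar{\iota} = \psi\circ\hat{\iota} = \bar{\iota}$. Since $\bar{\iota}(\Irrat)$ is dense in $\TheCpt$ and $\TheCpt$ is Hausdorff, this forces $\psi\circ\phi = \mathrm{id}_{\TheCpt}$. Hence $\phi$ is injective, which is a contradiction.

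The only delicate step is making "homeomorphic" precise: it must mean isomorphic as compactifications, i.e. compatible with the embeddings. For a bare topological homeomorphism the statement would be false, since both spaces are Cantor sets. I would state this interpretation explicitly before running the density argument above.
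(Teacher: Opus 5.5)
Your proof is correct and uses the same mechanism as the paper: whenever $\bar a_1=\infty$ one has $\bar\mu=0$ regardless of $\varepsilon_1$. The paper takes all $\bar a_n=\infty$ with arbitrary signs, so all such points share the itinerary $0,0,0,\ldots$; you vary only $\varepsilon_1$ over an arbitrary common tail. Your density argument ($\psi\circ\phi=\mathrm{id}$ forcing $\phi$ to be injective) correctly supplies the deduction of the first claim, which the paper leaves implicit.

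One correction to your closing aside. It is not load-bearing, but it is false: $\hat{\Irrat}$ is not a Cantor set. The $\bar\mu$-itinerary also glues adjacent cylinders; for example, $\seq{(+,m),(+,\infty),\ldots}$ and $\seq{(+,m+1),(-,\infty),\ldots}$ with a common tail have the same itinerary. In fact, by induction every finite projection $X_k\subset\T^k$ of $\hat{\Irrat}$ is connected, because the fibres of $X_k\to\T$ are points or copies $\{\ast\}\times X_{k-1-j}$. So $\hat{\Irrat}$ is a continuum, and the proposition holds even for bare topological homeomorphisms.
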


\begin{proof}
    Here is one example.
    Let $\ttheta \in \TheCpt$ be of the form $\seq{(\varepsilon_1, \infty), (\varepsilon_2, \infty), (\varepsilon_3, \infty), \ldots}$.
    Regardless of the value of $\varepsilon_n$'s, we have that for all $n \geq 0$, $\bar{\mu}(\shift^n(\ttheta)) \equiv 0$ (mod $1$).
\end{proof}

\subsection{The time group}

\begin{definition}
\label{def:time-group}
For every $\ttheta = \seq{ (\varepsilon_n,\bar{a}_n) }_{n\geq 1} \in \TheCpt$, we define an additive abelian group $\mathcal{T}^{\textnormal{gp}}_{\ttheta}$, called the \emph{time group of $\ttheta$}, such that it is generated by the sequence $\{\qq_{[n]}\}_{n \geq 0}$ subject to the relations
\[
    \qq_{[n+1]} = b_{n+1}(\ttheta) \, \qq_{[n]} - \varepsilon_{n} \varepsilon_{n+1}  \, \qq_{[n-1]}
\]
whenever $n$ is not a parabolic level, and where $\qq_{[-1]} = 0$ is the identity element.
For $n \geq 0$, the element $\qq_{[n]} = \qq_{[n]}(\ttheta) \in \mathcal{T}^{\textnormal{gp}}_{\ttheta}$ will be referred to as the \emph{$n$\textsuperscript{th} return time} of $\ttheta$.
\end{definition}

\begin{proposition}[Chronological order]
\label{prop:chronological-order-01}
    For every $\ttheta = \seq{ (\varepsilon_n,\bar{a}_n) }_{n\geq 1} \in \TheCpt$, the time group $\mathcal{T}^{\textnormal{gp}}_{\ttheta}$ admits a unique translation-invariant total order $<$, called the chronological order of $\mathcal{T}_{\ttheta}$, with the property that for all $n \geq 0$,
    \begin{enumerate}
        \item $0 < \qq_{[n]} < \qq_{[n+1]}$,
        \item if $\bar{a}_{n+1} = \infty$, then $k \qq_{[n]} < \qq_{[n+1]}$ for all $k \geq 1$.
    \end{enumerate}
\end{proposition}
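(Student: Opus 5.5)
The plan is to first identify the structure of $\mathcal{T}^{\textnormal{gp}}_{\ttheta}$ explicitly as a free abelian group, and then to write down the order lexicographically. Let $N_1 < N_2 < \ldots$ be the parabolic levels of $\ttheta$, and set $N_0 := -1$ for convenience. The relations only hold at non-parabolic levels. Between consecutive parabolic levels, every $\qq_{[n]}$ with $N_j < n \le N_{j+1}$ is an integer combination of $\qq_{[N_j]}$ and $\qq_{[N_j+1]}$, obtained by running the recurrence; when $j=0$ we use $\qq_{[-1]}=0$, so the first block is generated by $\qq_{[0]}$ alone. Hence the group is generated by the ``block generators'' $g_0 := \qq_{[0]}$ and $g_j := \qq_{[N_j+1]}$ for $j \ge 1$. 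Since no relation links $\qq_{[N_j+1]}$ to earlier terms, the group is free on these generators, i.e.\ $\mathcal{T}^{\textnormal{gp}}_{\ttheta} \cong \bigoplus_j \Z g_j$. This has rank $1+\#\{\text{parabolic levels}\}$, which may be infinite. Within block $j$, each $\qq_{[n]}$ can be written as $\qq_{[n]} = q^{(j)}_{n}\, g_j + r^{(j)}_n\, \qq_{[N_j]}$. Here $q^{(j)}_n$, $r^{(j)}_n$ are the integer continuants produced by the finite sequence $(\varepsilon_m,\bar a_m)$ for $N_j+1< m \le n$, and they are computed exactly as the $q_{[\cdot]}$ of a shifted finite word.

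For existence, define $P>0$ iff the top nonzero coordinate of $P$ in the basis $\{g_j\}$ is positive. This is a translation-invariant total order, of lexicographic type with later generators dominating. Property (2) then follows directly: if $\bar a_{n+1}=\infty$ then $n=N_j$ for some $j$, and $\qq_{[n+1]} = g_j$ is a new generator, while $\qq_{[n]}$ lies in the span of $g_0,\dots,g_{j-1}$. For (1), positivity of $\qq_{[n]}$ reduces to $q^{(j)}_n>0$. This holds by applying Lemma \ref{lem:p[n]-q[n]} (5) to a finite word, completed arbitrarily by finite entries to an element of $\The$; the recursion only sees finitely many terms. This gives $q^{(j)}_n \ge 1$. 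Monotonicity $\qq_{[n]}<\qq_{[n+1]}$ within a block follows because $q^{(j)}_{n+1} > q^{(j)}_n$, again by Lemma \ref{lem:p[n]-q[n]} (5); if these top coefficients were equal, we would compare the $r$-coefficients, but strict inequality avoids this. Across a block boundary, monotonicity is just (2) with $k=1$.

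For uniqueness, suppose $\prec$ is any translation-invariant total order with (1) and (2). The main step, and the expected obstacle, is to show that $g_j \succ k\,|x|$ for every $x$ in the span of $g_0,\dots,g_{j-1}$ and every $k$; this forces the lexicographic order. I would argue by induction on $j$. Suppose $\prec$ agrees with $<$ on $\mathrm{span}(g_0,\dots,g_{j-1})$. Every positive element $x$ there satisfies $x \preceq c\,\qq_{[N_j]}$ for some integer $c$. To see this, first show that each block generator is dominated by a multiple of $\qq_{[N_j]}$ using (1) and (2), which give $g_i \prec \qq_{[N_i+1]} \preceq \qq_{[N_j]}$. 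Then bound the top coefficient and the lower terms inductively. Combined with (2), this gives $x \prec g_j$ after scaling. Positivity of the block-$j$ combinations $q^{(j)}_n g_j + r^{(j)}_n \qq_{[N_j]}$ is then automatic, so $\prec$ coincides with $<$ on the enlarged span. The delicate bookkeeping is the domination $x \preceq c\,\qq_{[N_j]}$ when the lower blocks themselves involve several generators, so I would phrase the induction hypothesis directly as \emph{``$\mathrm{span}(g_0,\dots,g_{j-1})$ is Archimedean-bounded by $\qq_{[N_j]}$''}. In an ordered group, uniqueness of an order is determined by its positive cone, so once both orders share the same positive cone they coincide.
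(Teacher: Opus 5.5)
Your proposal is correct, but it takes a different route from the paper.

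\textbf{The paper's route.} The paper never identifies the structure of $\mathcal{T}^{\textnormal{gp}}_{\ttheta}$. It defines the candidate positive cone $\mathcal{T}_{\ttheta}$ as the semigroup generated by $\qq_{[0]}$, the differences $\qq_{[n+1]}-\qq_{[n]}$, and the elements $\qq_{[n+1]}-k\qq_{[n]}$ at parabolic levels. It then shows by induction on $m$ that every integer combination of $\qq_{[0]},\ldots,\qq_{[m]}$ lies in $\mathcal{T}_{\ttheta}\cup\{0\}\cup(-\mathcal{T}_{\ttheta})$. At non-parabolic steps it eliminates $\qq_{[m]}$ via the recurrence; at parabolic steps it decomposes a combination with top coefficient $1$ into cone generators. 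Uniqueness is then automatic, because any admissible order contains these generators in its positive cone.

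\textbf{Your route.} You Tietze-reduce the presentation to a free abelian group on the block generators, write down the lexicographic order, and get positivity and monotonicity of the block continuants from Lemma \ref{lem:p[n]-q[n]}(5).

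\textbf{What each buys.} Your approach makes existence fully transparent. The paper's induction really only argues the covering. The disjointness half of ``exactly one'' (equivalently $0\notin\mathcal{T}_{\ttheta}$) is exactly where positivity of the finite continuants is needed, and your explicit model supplies it. The paper's route, on the other hand, yields an explicit generating set for the time semigroup. It also carries over to Proposition \ref{prop:chronological-order-02}, where there is no bottom block. There, for $\tttheta\in\TheBi$, the time group is $\Z^2$ with an Archimedean order, so no lexicographic description exists.

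\textbf{Simplifying your uniqueness step.} It is easier than you expect.
\begin{enumerate}
\item The inequality $g_i\prec\qq_{[N_i+1]}$ should be the equality $g_i=\qq_{[N_i+1]}$.
\item For $x=\sum_{i<j}c_ig_i$, property (1) gives $0\prec g_i\preceq\qq_{[N_j]}$. Hence $x\preceq\bigl(\sum_{i<j}|c_i|\bigr)\qq_{[N_j]}\prec g_j$ by (2), trivially if $x=0$.
\item Applying this to $-x$ shows $cg_j+x=(c-1)g_j+(g_j+x)\succ0$ for every $c\geq1$.
\end{enumerate}
So the $\prec$-positive cone contains the lexicographic one, and the two orders coincide. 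No induction on $j$ and no further bookkeeping is needed.
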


The positive cone of $<$ is the commutative semigroup
\[
    \mathcal{T}_{\ttheta} := \{P \in \mathcal{T}^{\textnormal{gp}}_{\ttheta} \: : \: P > 0 \},
\] 
which we refer to as the \emph{time semigroup} of $\ttheta$.

\begin{proof}
    The order $<$ is uniquely characterized by the semigroup $\mathcal{T}_{\ttheta}$ since $a< b$ if and only if $b-a \in \mathcal{T}_{\ttheta}$.
    Hence, it is sufficient to construct the time semigroup $\mathcal{T}_{\ttheta}$ and claim that $\mathcal{T}^{\textnormal{gp}}_{\ttheta}$ is the disjoint union of $\mathcal{T}_{\ttheta}$, $\{0\}$, and $-\mathcal{T}_{\ttheta}$.

    Let $\mathcal{T}_{\ttheta}$ be the semigroup generated by
    \[
        \{\qq_{[0]}\} \cup \{ \qq_{[n+1]} - \qq_{[n]} \}_{n \geq 0} \cup \bigcup_{\bar{a}_{n+1} = \infty} \{ \qq_{[n+1]} - k \qq_{[n]} \}_{k \geq 2}.
    \]
    Let us prove by induction over $m \geq 0$ the following statement:
    \begin{equation}
        \begin{array}{c}
            \text{For every } (c_0,c_1,\ldots,c_m) \in \Z^{m+1}, \text{ the sum } \sum_{j=0}^m c_j \qq_{[j]}  \\
              \text{is contained in exactly one of the following sets: } \mathcal{T}_{\ttheta}, \,\{0\}, \, -\mathcal{T}_{\ttheta}.
        \end{array}
        \tag{$\heartsuit_m$}
    \end{equation}
    Since $\qq_{[0]}$ is in $\mathcal{T}_{\ttheta}$, $(\heartsuit_0)$ holds.
    Suppose $(\heartsuit_k)$ holds for some $k \geq 0$.
    Pick any sequence of integers $c_0,c_1,\ldots, c_{k+1}$ with $c_{k+1} \neq 0$ and let $x = \sum_{j=0}^{k+1} c_j \qq_{[j]}$. There are two cases.
    \begin{itemize}
        \item Suppose $\bar{a}_{k+1} < \infty$. 
        Since $\qq_{[k+1]}$ is an integral combination of $\qq_{[k]}$ and $\qq_{[k-1]}$, then by $(\heartsuit_k)$, we have $x \in \mathcal{T}_{\ttheta} \cup \{0\} \cup -\mathcal{T}_{\ttheta}$.
        \item Suppose $\bar{a}_{k+1} = \infty$.
        Since $\qq_{[k+1]}$ is in $\mathcal{T}_{\ttheta}$, it suffices to show that if $c_{k+1} = 1$, then $x \in \mathcal{T}_{\ttheta}$.
        Hence, we will now assume that $c_{k+1} = 1$.
        For $j \in \{0,\ldots,k\}$, denote $d_j = \sum_{j=0}^k \max\{0, -c_j\}$, which satisfies $d_j \geq 0$ and $c_j + d_j \geq 0$.
        Let $d = \sum_{j=0}^k d_j \geq 0$. Then,
        \begin{align*}
            x &= \sum_{j=0}^k c_j \qq_{[j]} + \qq_{[k+1]} \\
            &= \sum_{j=0}^k (c_j+d_j) \qq_{[j]} + \sum_{j=0}^k d_j (\qq_{[k]}-\qq_{[j]}) + (\qq_{[k+1]} - d \qq_{[k]}). 
        \end{align*}
        The last term in the summation above is contained in $\mathcal{T}_{\ttheta}$, and each of the remaining terms is contained in $\mathcal{T}_{\ttheta} \cup \{0\}$.
        Therefore, $x$ is in $\mathcal{T}_{\ttheta}$.
    \end{itemize}
    Hence, by induction, $(\heartsuit_k)$ holds for all $k$, and we are done.
\end{proof}

If $\ttheta$ is irrational, then $\qq_{[0]} \mapsto 1$ induces an isomorphism between $(\mathcal{T}^{\textnormal{gp}}_{\ttheta},<)$ and the ordered group of integers $(\Z,<)$ that sends each $\qq_{[n]}$ to the number $q_{[n]} \in \N$ coming from \S\ref{ss:more-notation}. 
Otherwise, $(\mathcal{T}^{\textnormal{gp}}_{\ttheta},<)$ is a non-Archimedian ordered group.

In practice, sometimes it is more convenient to consider a different set of generators $\{\qq_n = \qq_n(\ttheta)\}_{n\geq 0}$ for $\mathcal{T}^{\textnormal{gp}}_{\theta}$.
It mimics the denominator continuants of the regular continued fraction expansion and it is defined together with an increasing sequence of integers $\{t_n\}_{n \geq 0}$ and a sequence $\{a_n\}_{n \geq 1}$ of elements of $\overline{\N}$ as follows.
First, set 
\[
t_0 = 0, \qquad a_1 = \bar{a}_1, \quad \text{and} \quad \qq_{0} = \qq_{[0]}.
\]
The rest is defined inductively as follows. For $n \geq 0$,
\begin{itemize}
    \item if $\varepsilon_{n+1}\varepsilon_{n+2} = -1$, then 
    \[
        t_{n+1} = t_n +1, \quad a_{t_n+2} = \bar{a}_{n+2}, \quad \text{and} \quad \qq_{t_{n+1}} = \qq_{[n+1]};
    \]
    \item if $\varepsilon_{n+1}\varepsilon_{n+2} = +1$, then 
    \begin{gather*}
        t_{n+1} = t_n +2, \qquad  a_{t_n+2} = 1, \qquad a_{t_n+3} = \bar{a}_{n+2}-1, \\
        \qq_{t_n+1} = \qq_{[n+1]} - \qq_{[n]}, \qquad  \text{and} \qquad \qq_{t_{n+1}} = \qq_{[n+1]}.
    \end{gather*}
\end{itemize}
We will call $\qq_n = \qq_n(\ttheta)$ the $n$\textsuperscript{th} \emph{slow return time} of $\ttheta$.
The group relations become slightly simpler, namely
\[
    \qq_{n+1} = a_{n+1} \qq_n + \qq_{n-1}  \qquad \text{ for } n \geq 0 \text{ with } a_{n+1} < \infty,
\]
where $\qq_{-1} = 0$ is set to be the identity for convenience.
In this format, the set of generators of the time semigroup $\mathcal{T}_{\theta}$ is
\[
    \{\qq_{n}\}_{n\geq 0} \cup \bigcup_{a_{n+1} = \infty} \{\qq_{n+1} - k \qq_n \}_{k \geq 1}. 
\]

\section{The natural extension}
\label{sec:natural-extension}

Consider the bi-infinite sequence space
\[
    \TheBiCpt := \overline{\Sigma}^{\Z} = 
    \left\{ \seq{ (\varepsilon_n,\bar{a}_n) }_{n\in \Z} \: : \: 
    (\varepsilon_{n},\bar{a}_n) \in \overline{\Sigma} \text{ for all } n \right\},
\]
equipped with the infinite product topology. The projection map
\[
    \textnormal{proj}: \TheBiCpt \to \TheCpt, \quad 
    \seq{ \ldots, (\varepsilon_{-1}, \bar{a}_{-1}), (\varepsilon_0, \bar{a}_0); (\varepsilon_1, \bar{a}_1), \ldots } 
    \mapsto 
    \seq{ (\varepsilon_1, \bar{a}_1), (\varepsilon_2, \bar{a}_2), \ldots }
\]
is continuous and the shift map $\shift: \TheBiCpt \to \TheBiCpt$ given by
\[
\shift( \seq{ \ldots, (\varepsilon_{-1}, \bar{a}_{-1}), (\varepsilon_0, \bar{a}_0); (\varepsilon_1, \bar{a}_1), \ldots } ) = \seq{ \ldots, (\varepsilon_{0}, \bar{a}_{0}), (\varepsilon_1, \bar{a}_1); (\varepsilon_2, \bar{a}_2), \ldots }
\]
is a homeomorphism.
Indeed, $\shift: \TheBiCpt \to \TheBiCpt$ is the natural extension of $\shift: \TheCpt \to \TheCpt$.

In this final section, we discuss the time group associated to an element of $\TheBiCpt$ and and show that it can be equipped with two natural orders.
We then discuss how every irrational element of $\TheBiCpt$, which represents a bi-infinite tower of sector renormalizations, induces a cascade of translations parametrized by its time group.

\subsection{The time group}

\begin{definition}
For every $\tttheta = \seq{ (\varepsilon_n, \bar{a}_n) }_{n \in \Z} \in \TheBiCpt$, we again define an additive abelian group $\Tbold^{\textnormal{gp}}_{\tttheta}$, called the \emph{time group} of $\tttheta$, 
such that it is generated by the sequence $\{Q_{[n]}\}_{n\in\Z}$ and subject to the relations
\[
    Q_{[n+1]} = b_{n+1} Q_{[n]} - \varepsilon_{n} \varepsilon_{n+1}  Q_{[n-1]} 
    \quad \text{where}
    \quad
    b_{n+1} = \bar{a}_{n+1} + \frac{1+\varepsilon_{n+1} \varepsilon_{n+2}}{2}
\]
for all $n \in \Z$ with $\bar{a}_{n+1} < \infty$. 
\end{definition}

For every $n \in \Z$, $Q_{[n]}= Q_{[n]}(\tttheta)$ is called the \emph{$n$\textsuperscript{th}} return time of $\tttheta$.
Below, we will again define another generating set $\{Q_n=Q_n(\tttheta)\}_{n\in\Z}$ for $\Tbold^{\textnormal{gp}}_{\tttheta}$.
To do so, we will need a strictly increasing sequence of integers $\{t_n = t_n(\tttheta)\}_{n \in \Z}$ and a sequence $\{a_n = a_n(\tttheta)\}_{n\in\Z}$ of elements of $\overline{\N}$.
These sequences are defined as follows.

First, we set $t_0 = 0$. 
For all $n \geq 1$, $t_n$ and $t_{-n}$ are defined recursively by
\[
    t_n = \begin{cases}
        t_{n-1}+1 & \text{ if } \varepsilon_{n} \varepsilon_{n+1} = -,\\
        t_{n-1}+2 & \text{ if } \varepsilon_{n} \varepsilon_{n+1} = +,
    \end{cases}
\]
and 
\[
    t_{-n} = \begin{cases}
        t_{-n+1}-1 & \text{ if } \varepsilon_{-n+1} \varepsilon_{-n+2} = -,\\
        t_{-n+1}-2 & \text{ if } \varepsilon_{-n+1} \varepsilon_{-n+2} = +.
    \end{cases}
\]
For all $n \in \Z$,
\begin{itemize}
    \item if $\varepsilon_n \varepsilon_{n+1} = -$, set $a_{t_n + 1} = \bar{a}_{n+1}$;
    \item if $\varepsilon_n \varepsilon_{n+1} = +$, set $a_{t_n} = 1$ and $a_{t_n + 1} = \bar{a}_{n+1}-1$.
\end{itemize}
(Here, we make the convention that $\infty + k  = \infty$ for any integer $k$.)
For all $n \in \Z$, we set
\[
    Q_{t_n} := Q_{[n]},
\]
and whenever $\varepsilon_n \varepsilon_{n+1} = +$, we set 
\[
    Q_{t_n-1} := Q_{[n]}-Q_{[n-1]}.
\]
The corresponding set of relations for this new generating set is simpler: for every $m \in \Z$ with $a_m < \infty$, we have
\[
    Q_m = a_m Q_{m-1} + Q_{m-2}.
\]

The conversion above can be reversed assuming that the initial orientation has been specified.

\begin{proposition}
    The map
    \[
        \TheBiCpt \to \overline{\N}^{\Z}, \quad \tttheta \mapsto \{a_n(\tttheta)\}_{n\in\Z}
    \]
    is a double covering map where every fiber is of the form 
    \[
        \{ \seq{(\varepsilon_n, \bar{a}_n)}_{n\in\Z}, \seq{(-\varepsilon_n, \bar{a}_n)}_{n\in\Z}\}.
    \]
\end{proposition}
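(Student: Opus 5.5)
The strategy is to show that the symbolic map $\tttheta \mapsto \{a_m(\tttheta)\}_{m\in\Z}$ factors through the data $\bigl(\seq{\varepsilon_n\varepsilon_{n+1}}_{n\in\Z},\seq{\bar{a}_n}_{n\in\Z}\bigr)$, and that this data can be recovered from $\{a_m\}$. The fibers are then exactly the pairs obtained by a global sign flip. Invariance under $\varepsilon_n \mapsto -\varepsilon_n$ for all $n$ is immediate: the recursion for $\{t_n\}$ and the assignment of the $a_m$'s only involve the products $\varepsilon_n\varepsilon_{n+1}$ and the $\bar{a}_n$'s. Moreover, $\seq{(\varepsilon_n,\bar{a}_n)} \neq \seq{(-\varepsilon_n,\bar{a}_n)}$ since every $\varepsilon_n \neq 0$. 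Conversely, a bi-infinite sign sequence is determined by its consecutive products together with a single value, say $\varepsilon_0$. So once the products and the $\bar{a}_n$'s are recovered, each fiber has exactly two points. The remaining claims are local constancy and surjectivity onto the image, which give the covering property.

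The recovery step is a decoding algorithm anchored at $t_0=0$. Going forward, suppose $t_{n-1}$ is known. I claim that $\varepsilon_n\varepsilon_{n+1}=+$ if and only if $a_{t_{n-1}+2}=1$.
\begin{itemize}
\item In the $+$ case, $t_n=t_{n-1}+2$ and the inserted entry is $a_{t_n}=1$.
\item In the $-$ case, $t_n=t_{n-1}+1$ and $a_{t_{n-1}+2}=a_{t_n+1}=\bar{a}_{n+1}\geq 2$.
\end{itemize}
So $t_n$ is determined. Then $\bar{a}_{n+1}$ is read off as $a_{t_n+1}$ or $a_{t_n+1}+1$ according to the sign product just decoded, with the convention $\infty+1=\infty$. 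By induction this recovers $\varepsilon_n\varepsilon_{n+1}$ for $n\geq 1$ and $\bar{a}_{n}$ for $n\geq 1$. The value $\varepsilon_0\varepsilon_1$ and the $\bar{a}_n$ with $n\leq 0$ must be recovered going backward. Here the predecessor $t_{n-1}\in\{t_n-1,t_n-2\}$ is characterized as the unique candidate $c$ from which the forward rule lands exactly on $t_n$. The candidate $c=t_n-2$ requires $a_{t_n}=1$. The candidate $c=t_n-1$ requires $a_{t_n+1}\neq 1$, together with the constraint that $a_{t_n}$ be a legitimate value $\bar{a}_n$ or $\bar{a}_n-1$ coming from level $n-1$.

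Continuity is routine: each coordinate $a_m$ depends on finitely many $(\varepsilon_n,\bar{a}_n)$, continuously in the product topology of $\overline{\N}$. Since $\TheBiCpt$ is compact and the quotient by the free involution $\varepsilon\mapsto-\varepsilon$ is then a compact Hausdorff space mapping bijectively onto the image, the map is a two-sheeted covering onto its image, with the involution as deck transformation.

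The main obstacle is the backward step in the configuration $a_{t_n}=1$, $a_{t_n+1}\geq 2$. In that configuration both predecessors are locally consistent: either level $n$ is a $+$ level, or level $n$ is a $-$ level with a $+$ level at $n-1$ and $\bar{a}_n=2$. My first attempt would be to push the two competing backward histories further back and look for a forced contradiction, for instance through the parity of runs of $1$'s preceding $t_n$. If that fails, I would have to check whether the statement implicitly intends the $a$-sequence to be considered together with the marking of the positions $\{t_n\}$, or up to re-indexing. In that case I would prove the double-cover claim for the decorated sequence $\bigl(\{a_m\},\{t_n\}\bigr)$, where the forward argument above already suffices in both directions.
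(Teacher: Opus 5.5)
Your forward step is correct, and the obstacle you flag is real. The backward step cannot be completed, because the statement is false as written. The paper's own proof only asserts that the $a_n$'s ``determine $\delta_n$, the $t_n$'s, and the $\bar a_n$'s'' and that the conversion ``can be reversed''. Both assertions fail at exactly your ambiguous configuration.

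It helps to read $\{a_m\}$ as a concatenation of words: $w_n=(\bar a_{n+1})$ if $\varepsilon_n\varepsilon_{n+1}=-1$, and $w_n=(1,\bar a_{n+1}-1)$ if $\varepsilon_n\varepsilon_{n+1}=+1$, with $w_0$ ending at position $t_0+1=1$. This is a prefix code, which is why forward decoding works. It is not a suffix code, since $(c)$ is a suffix of $(1,c)$. Every letter $\neq 1$ ends a word, so a run of $1$'s bounded on the left is forced to be paired from its left end; your parity idea does settle that case. But a left-infinite run of $1$'s followed by a letter $c\neq 1$ can be closed either as $\ldots(1,1)(c)$ or as $\ldots(1,1)(1,c)$, and nothing further back breaks the tie.

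Concretely, fix a common tail $(\varepsilon_n,\bar a_n)_{n\geq 2}$.
\begin{itemize}
\item Let $\tttheta$ have $(\varepsilon_n,\bar a_n)=(+1,2)$ for $n\leq 0$ and $(\varepsilon_1,\bar a_1)=(+1,4)$. Then $t_{-k}=-2k$, $a_0=1$ and $a_1=\bar a_1-1=3$.
\item Let $\tttheta'$ have $(\varepsilon_n,\bar a_n)=(-1,2)$ for $n\leq 0$ and $(\varepsilon_1,\bar a_1)=(+1,3)$. Then $t_{-1}=-1$, $a_1=\bar a_1=3$ and $t_{-k}=-2k+1$.
\end{itemize}
In both cases every level $n\leq -1$ contributes $(a_{t_n},a_{t_n+1})=(1,1)$, so $a_m=1$ for all $m\leq 0$. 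For $m\geq 2$, $a_m$ depends only on the shared data $(\varepsilon_n)_{n\geq1}$ and $(\bar a_n)_{n\geq 2}$. Since $\bar a_1$ differs, $\tttheta'$ is neither $\tttheta$ nor its global sign flip, so this fiber has at least four points. With a finite tail, both elements even lie in $\TheBi$.

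Relatedly, your forward induction recovers $\bar a_n$ only for $n\geq 2$, not $n\geq 1$. The formula $\bar a_1=a_1+\frac{1+\varepsilon_0\varepsilon_1}{2}$ already needs backward information, and the example exploits exactly that.

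Surjectivity also fails; the paper claims it, and you silently weakened it to ``onto its image''. Position $1=t_0+1$ is filled by level $0$, so $a_1=1$ forces $\varepsilon_0\varepsilon_1=+1$ and then $a_0=a_{t_0}=1$. Hence any sequence with $a_1=1\neq a_0$ has empty fiber.

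Your compactness argument (free involution, compact Hausdorff quotient mapping bijectively onto the image) is sound in itself, but it presupposes exactly the injectivity that fails. Worse, replacing $a_m$ by $3$ for all $m\leq -N$ in the example gives sequences in the image with two-point fibers. These converge to a sequence with a four-point fiber, so the map is not a covering even onto its image.

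Your fallback is the correct repair. For the decorated data $(\{a_m\},\{t_n\})$, the gaps $t_n-t_{n-1}$ give $\varepsilon_n\varepsilon_{n+1}$, and then $\bar a_{n+1}=a_{t_n+1}+\frac{1+\varepsilon_n\varepsilon_{n+1}}{2}$. So the fibers are exactly sign-flip pairs. Alternatively, without decoration, fibers are sign-flip pairs over image points that are not eventually $1$ as $m\to-\infty$. Either version is a different statement from the one posed, and neither your argument nor the paper's can prove the original.
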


\begin{proof}
    The continuity is an elementary routine exercise: the preimage of a cylinder set of $\overline{\N}^{\N}$ is a union of cylinder sets of $\TheBiCpt$.
    The map is surjective because the conversion above can be reversed.
    From the algorithm above, the $a_n$'s determine $\delta_n = \varepsilon_n \varepsilon_{n+1} \in \{-,+\}$, the $t_n$'s, and the $\bar{a}_n$'s.
    However, $\varepsilon_1$ cannot be determined from the $a_n$'s, resulting in the map having degree two.
\end{proof}

\subsection{Two orders}

\begin{proposition}[Chronological order]
\label{prop:chronological-order-02}
    For every $\tttheta = \seq{ (\varepsilon_n, \bar{a}_n) }_{n \in \Z} \in \TheCpt$, the time group $\Tbold^{\textnormal{gp}}_{\tttheta}$ admits a unique translation-invariant total order $<$, called the chronological order of $\Tbold_{\tttheta}$, with the property that for all $n \in \Z$,
    \begin{enumerate}
        \item $0 < Q_{[n]} < Q_{[n+1]}$,
        \item if $\bar{a}_{n+1} = \infty$, then $k Q_{[n]} < Q_{[n+1]}$ for all $k \geq 1$.
    \end{enumerate}
\end{proposition}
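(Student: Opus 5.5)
We would mimic the proof of Proposition~\ref{prop:chronological-order-01}. The order is determined by its positive cone, since $a<b$ iff $b-a$ is positive. So it suffices to do two things. First, exhibit a subsemigroup $\Tbold_{\tttheta}$ such that $\Tbold^{\textnormal{gp}}_{\tttheta}$ is the disjoint union of $\Tbold_{\tttheta}$, $\{0\}$, and $-\Tbold_{\tttheta}$. Second, check that any order with properties (1) and (2) has exactly this cone. We take $\Tbold_{\tttheta}$ to be the semigroup generated by
\[
\{Q_{[n]}\}_{n\in\Z} \cup \{Q_{[n+1]}-Q_{[n]}\}_{n\in\Z} \cup \bigcup_{\bar{a}_{n+1}=\infty}\{Q_{[n+1]}-kQ_{[n]}\}_{k\geq 2}.
\]
Every element in this list is forced to be positive by (1) and (2). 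So any admissible order has positive cone containing $\Tbold_{\tttheta}$, and once the trichotomy holds, the cone equals $\Tbold_{\tttheta}$. This gives uniqueness. Existence follows from the trichotomy together with the fact that $\Tbold_{\tttheta}$ is closed under addition.

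Every element of the group is a finite integral combination $\sum_{j=m}^{M} c_jQ_{[j]}$. The new feature compared to the one-sided case is that there is no bottom generator $\qq_{[0]}$ from which to start an induction. We handle this by a reduction to the one-sided setting. Fix $m\in\Z$ and let $G_m$ be the subgroup generated by $\{Q_{[j]}\}_{j\geq m-1}$. We compare $G_m$ with the time group $\mathcal{T}^{\textnormal{gp}}_{\ttheta_m}$ of the one-sided sequence $\ttheta_m=\seq{(\varepsilon_n,\bar{a}_n)}_{n\geq m}$.

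If $\bar{a}_m<\infty$, the relation at level $m-1$ expresses $Q_{[m]}$ through $Q_{[m-1]}$ and $Q_{[m-2]}$. In that case we should not treat $Q_{[m-1]}$ as a bottom generator. Instead we run the induction $(\heartsuit)$ starting from an index $m$ chosen so that the two bottom generators $Q_{[m-1]},Q_{[m-2]}$ are both available.

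Concretely, for a fixed element $x$ supported on indices $\geq m$, we prove $(\heartsuit)$ by upward induction on the top index $M$. The base case requires that every combination $c_{m-1}Q_{[m-1]}+c_mQ_{[m]}$ lies in exactly one of $\Tbold_{\tttheta}$, $\{0\}$, $-\Tbold_{\tttheta}$. We split into cases.

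\emph{Case $\bar{a}_m=\infty$.} Then $Q_{[m]}-kQ_{[m-1]}$ is positive for all $k$, and the argument is the same as the parabolic case in Proposition~\ref{prop:chronological-order-01}.

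\emph{Case $\bar{a}_m<\infty$.} We need to decide the sign of $c_{m-1}Q_{[m-1]}+c_mQ_{[m]}$. We do this by descending, rewriting the combination through lower generators using the recurrence run backwards:
\[
Q_{[m-2]}=\varepsilon_{m-1}\varepsilon_m\bigl(b_mQ_{[m-1]}-Q_{[m]}\bigr).
\]
If a parabolic level occurs below $m$, the descent stops there and the sign is decided by the parabolic argument. If no parabolic level occurs below $m$, we use the irrational real realization. For a finite-type past, the numbers $l$ from Proposition~\ref{prop:ln}, read backwards, give positive real values $\lambda_j$ for the $Q_{[j]}$. Concretely,
\[
\lambda_j=\lim_{N\to\infty}\frac{q_{[j+N]}(\ttheta_{-N})}{q_{[N]}(\ttheta_{-N})},
\]
which converges by the contraction in Lemma~\ref{lem:p[n]-q[n]}(5). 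These values satisfy the relations, so $x\mapsto\sum c_j\lambda_j$ is a homomorphism to $\R$. We then define the sign on that block of indices via this map.

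The inductive step from $M$ to $M+1$ is verbatim the proof of Proposition~\ref{prop:chronological-order-01}. If $\bar{a}_{M+1}<\infty$, rewrite $Q_{[M+1]}$ via the recurrence. If $\bar{a}_{M+1}=\infty$, use the decomposition with the $d_j$'s, which is valid for any bottom index.

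The main obstacle is the case where the past is entirely non-parabolic. There we must show injectivity of the real realization $\lambda$, so that the trichotomy is exact: the zero element is the only one sent to $0$. We also need consistency when the block of indices is enlarged. I would first try to prove injectivity by showing that the $\lambda_j$ are rationally independent along the generators. Equivalently, we show that no nontrivial combination vanishes: going down the past, each relation step is an invertible $\mathrm{GL}_2(\Z)$ change of basis $(Q_{[j]},Q_{[j-1]})$. The group restricted to indices $\geq m$ is therefore free abelian of rank $2$ on $(Q_{[m]},Q_{[m-1]})$. Its image under $\lambda$ is $\Z\lambda_m+\Z\lambda_{m-1}$, and the ratio $\lambda_{m-1}/\lambda_m$ equals $-\varepsilon_{m-1}\varepsilon_m$ times an infinite backward continued fraction $[\varepsilon b,\ldots]_-$, which is irrational by the convergence proposition. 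Hence $\lambda$ is injective on each rank-$2$ block. Its compatibility with the relations ensures that the orders on the increasing blocks agree, and taking the union over $m\to-\infty$ gives the order on all of $\Tbold^{\textnormal{gp}}_{\tttheta}$.
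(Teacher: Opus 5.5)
Your overall structure matches the paper's: a generated cone, a base case on two consecutive generators, an upward induction copied from the one-sided proof, and the parabolic decomposition. The base case in an entirely non-parabolic past, however, has a genuine gap. There you decide the sign of $x=c_{m-1}Q_{[m-1]}+c_mQ_{[m]}$ by the sign of $\lambda(x)$. Your own framework needs \emph{membership} instead. You defined $\Tbold_{\tttheta}$ as the semigroup generated by $Q_{[n]}$, $Q_{[n+1]}-Q_{[n]}$ and the parabolic differences. Your uniqueness argument (``once the trichotomy holds, the cone equals $\Tbold_{\tttheta}$'') therefore requires that every $x$ with $\lambda(x)>0$ actually lies in that semigroup.

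Knowing $\lambda(x)>0$ does not show that $x$ is a nonnegative combination of elements forced positive by (1)--(2). So a second admissible order on that block is not ruled out. You also end up with two positivity notions ($\lambda>0$ on the block, and membership in $\Tbold_{\tttheta}$ from the inductive step), and you never check that they are compatible under addition.

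The missing idea is the paper's descent for $(\diamondsuit_1)$, which works just as well with your fast generators. Take $x=\alpha Q_{[j]}+\beta Q_{[j-1]}$ with $\alpha>0>\beta$; the other sign patterns are trivial or symmetric.
If $\alpha\geq|\beta|$, then $x=(\alpha+\beta)Q_{[j]}+|\beta|(Q_{[j]}-Q_{[j-1]})\in\Tbold_{\tttheta}$.
If $\bar a_j=\infty$, then $x=(\alpha-1)Q_{[j]}+(Q_{[j]}-|\beta|Q_{[j-1]})\in\Tbold_{\tttheta}$.
Otherwise, substituting $Q_{[j]}=b_jQ_{[j-1]}-\varepsilon_{j-1}\varepsilon_jQ_{[j-2]}$ gives $x=(\alpha b_j+\beta)Q_{[j-1]}-\varepsilon_{j-1}\varepsilon_j\alpha\,Q_{[j-2]}$, whose lower coefficient has absolute value $\alpha<|\beta|$.
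That absolute value strictly decreases, so after finitely many steps $x\in\Tbold_{\tttheta}\cup\{0\}\cup-\Tbold_{\tttheta}$. The paper runs this with the slow generators and phrases termination via irrationality of the backward continued fraction, but well-ordering suffices. No real realization is needed for this half.

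Your $\lambda$ is still useful for the other half of the trichotomy. It is positive on every generator lying in a non-parabolic past block, so it shows $0\notin\Tbold_{\tttheta}$ and $\Tbold_{\tttheta}\cap-\Tbold_{\tttheta}=\emptyset$ there, which the descent alone does not give. With parabolic levels present, you would still need to upgrade it to a lexicographic valuation in which the coefficient of each post-parabolic generator dominates.

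Two claims in your last paragraph also need correcting.
First, the subgroup generated by $\{Q_{[j]}\}_{j\geq m-1}$ is not free of rank $2$ when $\bar a_j=\infty$ for some $j>m$, since each such $Q_{[j]}$ is a new free generator. What is free on $(Q_{[m]},Q_{[m-1]})$ is the subgroup generated by $\{Q_{[j]}\}_{j\leq m}$ when $\bar a_j<\infty$ for all $j\leq m$.
Second, set $r_j=\lambda_{j-1}/\lambda_j\in(0,1)$. The relation $r_j=1/(b_j-\varepsilon_{j-1}\varepsilon_jr_{j-1})$ gives $r_m=[\,b_m,\ \varepsilon_{m-1}\varepsilon_mb_{m-1},\ \varepsilon_{m-2}\varepsilon_mb_{m-2},\ldots]_-$. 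Here the sign change following the entry carrying $b_j$ is $\varepsilon_{j-1}\varepsilon_j$ rather than $\varepsilon_j\varepsilon_{j+1}$. The admissibility rule of $\The$ can therefore fail, and the convergence proposition does not apply verbatim. Irrationality follows instead by descent: if $r_m$ were rational, so would be every $r_j$ with $j\leq m$. The denominator of $r_{j-1}$ equals the numerator of $r_j$ (in lowest terms), which is smaller than the denominator of $r_j$. This gives an infinite strictly decreasing sequence of positive integers, a contradiction.
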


The positive cone of $<$ is the commutative semigroup
\[
    \Tbold_{\tttheta} := \{P \in \Tbold^{\textnormal{gp}}_{\ttheta} \: : \: P > 0 \},
\] 
which we refer to as the \emph{time semigroup} of $\tttheta$.

\begin{proof}
    The idea is similar to the proof of Proposition \ref{prop:chronological-order-01}. 
    Consider the slow generating set $\{Q_n\}_{n\in\Z}$.
    We define $\Tbold_{\tttheta}$ to be the semigroup generated by
    \[
        \{Q_n\}_{n \in \Z} \cup \bigcup_{a_{n+1} = \infty} \{ Q_{n+1} - k Q_{n} \}_{n \in \Z}.
    \]
    We need to show that $\Tbold^{\textnormal{gp}}_{\tttheta}$ is the disjoint union of $\Tbold_{\tttheta}$, $\{0\}$, and $-\Tbold_{\tttheta}$.

    Let us prove by induction over $m \geq 0$ the following statement:
    \begin{equation}
        \begin{array}{c}
            \text{For every } (n, c_0, c_1, \ldots, c_m) \in \Z^{m+2}, \text{ the sum } \sum_{j=0}^k c_j Q_{n+m} \\
              \text{is contained in exactly one of the following sets: } \mathcal{T}_{\ttheta}, \,\{0\}, \, -\mathcal{T}_{\ttheta}.
        \end{array}
        \tag{$\diamondsuit_m$}
    \end{equation}
    Similar to the proof of Proposition \ref{prop:chronological-order-01}, ($\diamondsuit_0$) is trivial and for $m \geq 1$, ($\diamondsuit_m$) implies ($\diamondsuit_{m+1}$). It remains to prove ($\diamondsuit_1$).

    Let us pick $n \in \Z$ and suppose $x$ is an integral combination of $Q_n$ and $Q_{n-1}$.
    If the coefficients of $Q_n$ and $Q_{n-1}$ are either zero or have the same sign, then clearly $x$ will belong in either $\mathcal{T}_{\ttheta}$, or $\{0\}$, or $-\mathcal{T}_{\ttheta}$.
    We can now assume that 
    \[
    x=c_0 Q_{n} - d_0 Q_{n-1}
    \]
    for some co-prime positive integers $c_0$ and $d_0$.
    We will assume that $0<c_0<d_0$ because otherwise $x$ can be expressed as the sum of $(c_0-d_0)Q_n \in \Tbold_{\tttheta} \cup \{0\}$ and $d_0(Q_n-Q_{n-1}) \in \Tbold_{\tttheta}$ and we are done.
    To proceed, there are two cases.
    \begin{enumerate}
        \item[(i)] Suppose $a_{n} =\infty$. We can present $x$ as
        \[
            x= (c_0-1)Q_{n} + (Q_{n}- d_0 Q_{n-1}).
        \]
        In the expression above, the first term is in $\Tbold_{\tttheta} \cup \{0\}$ and the second term is in $\Tbold_{\tttheta}$. Hence, $x$ is in $\Tbold_{\tttheta}$.
        \item[(ii)] Suppose $a_{n} < \infty$, so then we can write
        \[
            x = - (c_{-1} Q_{n-1} - d_{-1} Q_{n-2} ) \qquad \text{where }
            c_{-1} = d_0 - c_0 a_n \text{ and } d_{-1} = c_0.
        \]
        There are three sub-cases.
        \begin{enumerate}
            \item Suppose $c_{-1} \leq 0$.
            Since $x$ is the sum of $(-c_{-1})Q_{n-1} \in \Tbold_{\tttheta}\cup\{0\}$ and $d_{-1} Q_{n-2} \in \Tbold_{\tttheta}$, then $x$ is in $\Tbold_{\tttheta}$.
            \item Suppose $c_{-1} \geq d_{-1}$.
            Since $x$ is the sum of $- (c_{-1}-d_{-1}) Q_{n-1} \in -\Tbold_{\tttheta}\cup\{0\}$ and $- d_{-1}(Q_{n-1}-Q_{n-2}) \in -\Tbold_{\tttheta}$, then $x$ is in $-\Tbold_{\tttheta}$.
            \item Suppose $0<c_{-1}<d_{-1}$. This assumption is equivalent to
            \[
                \frac{1}{a_n+1} < \frac{c_0}{d_0} < \frac{1}{a_n}.
            \]
            Analogous to (i), we have $x \in -\Tbold_{\tttheta}$ if $a_{n-1} = \infty$.
            Suppose $a_{n-1}<\infty$. Then, we can write
            \[
                x = c_{-2}Q_{n-2} - d_{-2} Q_{n-3}
            \]
            where
            \[
                c_{-2} = d_{-1} - c_{-1} a_{n-1} \qquad \text{ and } \qquad d_{-2} = c_{-1}.
            \]
            Again, analogous to (2) (a)--(b), we are done if $c_{-2} \not\in (0, d_{-2})$ and this leaves us with the case where $0<c_{-2}<d_{-2}$.
            This amounts to
            \[
                \frac{1}{a_n + \frac{1}{a_{n-1}}} < \frac{c_0}{d_0} < \frac{1}{a_n + \frac{1}{a_{n-1}+1}}.
            \]
        \end{enumerate}
    \end{enumerate}
    If we end up having to repeat the argument above $k \geq 1$ times, we are left with the case where $a_n, a_{n-1},\ldots,a_{n-k+1}<\infty$ and $x$ can be written as
    \[
        x = (-1)^k( c_{-k} Q_{n-k} - d_{-k} Q_{n-k-1} )
    \]
    where
    \[
        \begin{bmatrix}
            c_{-k} \\
            d_{-k}
        \end{bmatrix} = 
        \begin{bmatrix}
            -a_{n-k+1} & 1 \\
            1 & 0
        \end{bmatrix}
        \ldots
        \begin{bmatrix}
            -a_n & 1 \\
            1 & 0
        \end{bmatrix}
        \begin{bmatrix}
            c_{0} \\
            d_{0}
        \end{bmatrix}.
    \]
    In this case, $\frac{c_0}{d_0}$ is between $[a_n,a_{n-1},\ldots,a_{n-k+1}]$ and $[a_n,a_{n-1},\ldots,a_{n-k+1}+1]$.
    Eventually, this process has to stop at some value $k$ because otherwise the rational number $\frac{c_0}{d_0}$ would be arbitrarily close to the irrational $[a_n,a_{n-1},a_{n-2},\ldots]$. We are done with the proof of ($\diamondsuit_1$).
\end{proof}

Next, we will define another total order $\rhd$ on $\Tbold^{\textnormal{gp}}_{\tttheta}$.

\begin{proposition}[Left-right order]
\label{prop:left-right-order}
    For $\tttheta \in \TheBiCpt$, the time group $\Tbold^{\textnormal{gp}}_{\tttheta}$ admits a unique translation invariant total order $\lhd$, called the left-right order, that satisfies the following properties.
    \begin{enumerate}
        \item If $\varepsilon_1 = +1$, then $Q_0 \lhd 0$; otherwise, $\varepsilon_1 = -1$ and $0 \lhd Q_0$.
        \item For $n \in \Z$, $-Q_{n}$ is always $\lhd$-between $0$ and $Q_{n-1}$.
        \item For $n \in \Z$ with $a_{n+1} = \infty$, then $-kQ_n$ is always $\lhd$-between $0$ and $Q_{n-1}$ for all $k \geq 1$.
    \end{enumerate}
\end{proposition}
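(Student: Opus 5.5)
The plan is to reduce the left-right order to the chronological order of Proposition \ref{prop:chronological-order-02} by a sign twist on the slow generators, and then check (1)--(3) and uniqueness against that. In the irrational case the model is the translation cascade. Each $Q_n$ acts as a rotation by a signed length $\pm l_n$ whose sign alternates in $n$, because the closest returns $q_n\theta-p_n$ alternate in sign. The left-right order is then just the order of displacements on the real line. Symbolically, I define a group automorphism
\[
\tau: \Tbold^{\textnormal{gp}}_{\tttheta} \to \Tbold^{\textnormal{gp}}_{\tttheta}, \qquad \tau(Q_n) = (-1)^{n} Q_n .
\]
This is well defined because each relation $Q_m = a_m Q_{m-1} + Q_{m-2}$ becomes $(-1)^m Q_m = -a_m (-1)^{m-1}Q_{m-1} + (-1)^{m-2} Q_{m-2}$. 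That is exactly the relation satisfied by the elements $(-1)^nQ_n$, with the same set of indices $m$ for which $a_m<\infty$. Since $\tau$ is an involution, it is an automorphism.

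Next I set $x \lhd y$ iff $\pm\,\tau(y-x) > 0$ in the chronological order. The overall sign $\pm$ is fixed by $\varepsilon_1$ so that (1) holds: for $\varepsilon_1=+1$ we need $Q_0\lhd 0$, i.e.\ $\pm\tau(-Q_0)=\mp Q_0>0$, so we take the sign $-$. For $\varepsilon_1=-1$ we take $+$. Translation invariance and totality are inherited from $<$, since $\tau$ is a group automorphism.

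For (2), write the sign as $s\in\{\pm1\}$. Then $-Q_n$ being $\lhd$-between $0$ and $Q_{n-1}$ means that $s\tau(-Q_n)=-s(-1)^nQ_n$ and $s\tau(Q_{n-1})=-s(-1)^{n}Q_{n-1}$ lie on the same side of $0$, and also that $s\tau(Q_{n-1}+Q_n)$ has that same sign. All three reduce to $Q_n>0$, $Q_{n-1}>0$, and the fact that $Q_n$ and $Q_{n-1}$ carry the same sign. The "between" condition means $0\lhd -Q_n\lhd Q_{n-1}$ or the reverse. After applying $s\tau$ and multiplying by $s(-1)^{n+1}$, this becomes $0<Q_n<Q_n+Q_{n-1}$, which holds by positivity of the $Q$'s. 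For (3) the same computation reduces to $0<kQ_n<kQ_n+Q_{n-1}$.

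I must also check that $Q_n>0$ for every slow generator, including $Q_{t_n-1}=Q_{[n]}-Q_{[n-1]}$. This follows from Proposition \ref{prop:chronological-order-02}(1), and indeed the $Q_n$ are among the generators of $\Tbold_{\tttheta}$ in its proof.

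For uniqueness, let $\lhd'$ be another translation-invariant total order with (1)--(3). Then $P'=\{x: 0\lhd' x\}$ is a positive cone. Set $P''=\pm\tau(P')$. I claim $P''$ contains every $Q_n$ and every $Q_{n+1}-kQ_n$ with $a_{n+1}=\infty$. Property (1) fixes the sign at $Q_0$. Property (2) forces $Q_n$ and $Q_{n-1}$ to get opposite $\lhd'$-signs, which propagates the sign alternation to all $n\in\Z$ and transfers to $P''$. Property (3) handles the elements $Q_{n+1}-kQ_n$ in the same way. Hence $P''$ contains the generators of $\Tbold_{\tttheta}$ from the proof of Proposition \ref{prop:chronological-order-02}. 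Two positive cones of total orders, one containing the other, must coincide. So $P''=\Tbold_{\tttheta}$ and $\lhd'=\lhd$.

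The step I expect to be the main obstacle is the bookkeeping of the sign convention. Two things need care: the parity of the index $n$ relative to $t_n$ when $\varepsilon_n\varepsilon_{n+1}=+$ inserts the extra generator $Q_{t_n-1}$, and confirming that property (1) is phrased with $Q_0=Q_{t_0}=Q_{[0]}$. If the parity turns out to be off, I would replace $(-1)^n$ by $(-1)^{n+1}$; everything else is formal.
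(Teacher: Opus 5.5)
There is a genuine gap. The key step fails: $\tau(Q_n)=(-1)^nQ_n$ does not define a homomorphism of $\Tbold^{\textnormal{gp}}_{\tttheta}$.

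For $\tau$ to be well defined, the images must satisfy the \emph{original} relations, namely $(-1)^mQ_m=a_m(-1)^{m-1}Q_{m-1}+(-1)^{m-2}Q_{m-2}$. That says $Q_m=-a_mQ_{m-1}+Q_{m-2}$. Together with the true relation $Q_m=a_mQ_{m-1}+Q_{m-2}$, this forces $2a_mQ_{m-1}=0$, which is impossible since $Q_{m-1}>0$ in the chronological order. What you actually checked is that the elements $(-1)^nQ_n$ satisfy the \emph{twisted} relation with $-a_m$ in place of $a_m$, and that is a different relation. Concretely, with golden mean combinatorics all $a_m=1$, and $\tau(Q_2)=Q_1+Q_0$ whereas $\tau(Q_1)+\tau(Q_0)=Q_0-Q_1$. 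So the inherited translation invariance and totality have no foundation, and neither does the transfer $P''=\pm\tau(P')$ in your uniqueness step.

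Replacing $(-1)^n$ by $(-1)^{n+1}$ does not help, because a pure sign twist of $<$ gives the wrong order even formally. Under $x\lhd y\iff s\tau(y-x)>0$, the statement that $-Q_n$ is $\lhd$-between $0$ and $Q_{n-1}$ becomes, after multiplying by $s(-1)^{n+1}$, the statement that $Q_n$ is $<$-between $0$ and $Q_{n-1}$. That means $Q_n<Q_{n-1}$, which contradicts the chronological order. Your reduction to $0<Q_n<Q_n+Q_{n-1}$ is a slip: in fact $s\tau(Q_{n-1}+Q_n)=s(-1)^{n-1}(Q_{n-1}-Q_n)$. In the left-right order the generators shrink as $n$ grows, like the lengths $l_{[n]}$. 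Chronologically they grow, like the $q_{[n]}$.

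The missing idea is an index reversal. The elements $Q'_n=(-1)^nQ_n$ satisfy $Q'_{n-1}=a_{n+1}Q'_n+Q'_{n+1}$. The paper takes $\Upsilon$ to be the semigroup generated by all $Q'_n$ together with $Q'_{n-1}-kQ'_n$ for $a_{n+1}=\infty$. Note this is not $Q'_{n+1}-kQ'_n$, which is what your $\tau$ would produce, and which does not encode property (3). The paper then reruns the Euclidean-algorithm trichotomy of Proposition~\ref{prop:chronological-order-02} with the indices running the other way, to get $\Tbold^{\textnormal{gp}}_{\tttheta}=\Upsilon\sqcup\{0\}\sqcup(-\Upsilon)$. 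Properties (1)--(3) are then read off from the generators, with $\Upsilon$ taken as the positive or negative cone according to $\varepsilon_1$.

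Your cone-containment argument for uniqueness is correct and works directly with $\Upsilon$. Properties (1)--(3) force the positive cone of any admissible order to contain $\Upsilon$ or $-\Upsilon$ (according to $\varepsilon_1$), and hence to equal it.
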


We will call $\rhd$ the \emph{left-right order} on $\TheBiCpt$. It naturally descends to a translation-invariant total order $\rhd$ on the time semigroup $\Tbold_{\tttheta}$.
The geometric meaning behind $\lhd$ will become apparent in the next section.

\begin{proof}
    Assume $\varepsilon_1 = -$ and we will have $0 \lhd Q_0$.
    Let $Q'_n = (-1)^n Q_n$. 
    It satisfies the relation $Q'_{n-1} = a_{n+1} Q'_{n} + Q'_{n+1}$ for all $n \in \Z$ with $a_{n+1} < \infty$. 
    Let $\Upsilon$ be the semigroup generated by
    \[
        \{Q'_n\}_{n\in\Z} \cup \bigcup_{a_{n+1} = \infty} \{Q'_{n-1} - k Q'_n \}_{k \geq 1}.
    \]
    With almost the same proof as in Proposition \ref{prop:chronological-order-02}, the time group $\Tbold^{\textnormal{gp}}_{\tttheta}$ is indeed equal to the disjoint union of $\Upsilon$, $\{0\}$, and $-\Upsilon$.
    This induces a total order $\lhd$ on $\Tbold^{\textnormal{gp}}_{\tttheta}$ such that $\Upsilon$ is the corresponding positive cone, that is, $P \lhd Q$ if and only if $Q-P \in \Upsilon$.

    It remains the show that $\lhd$ satisfies the desired properties.
    Property (1) is clear. By the construction, we have that for every odd integer $n$,
    \[
    -Q_{n} \lhd 0 \lhd Q_{n+1} \lhd Q_{n-1},
    \]
    which implies property (2).
    For all $n$ with $a_{n+1} \infty$ and all $k \geq 1$, since both $Q'_n$ and $Q'_{n-1} - kQ'_n$ are in $\Upsilon$, then $0 \lhd k Q'_n \lhd Q'_{n-1}$ and this implies property (3).

    Lastly, if $\varepsilon_1 = +1$, then we will take $\Upsilon$ as the negative cone of $\lhd$. The rest of the proof above is exactly the same.
\end{proof}

The dynamical meaning of $\lhd$ will be clarified in Proposition \ref{prop:real-ordering}.

\subsection{Cascade of translations}
\label{ss:cascade}
 
Consider the subspace
\[
    \TheBi := \Sigma^{\Z} = \left\{ \seq{ (\varepsilon_n, \bar{a}_n) }_{n \in \Z} \in \TheBiCpt \: : \: \bar{a}_n < \infty \text{ for all }n \right\}
\]
of $\TheBiCpt$. 
We have the following commutative diagram.
\begin{center}
{\large
\begin{tikzcd}[row sep=large, column sep=large]
    {\TheBi} \arrow[d, "\mathfrak{X} \circ \textnormal{proj}"'] \arrow[r, hook] \arrow[loop left, "\shift"] 
    & {\TheBiCpt} \arrow[d, "\textnormal{proj}"] \arrow[loop right, "\shift"]\\
    {\Irrat} \arrow[r, "\overline{\iota}"'] \arrow[loop left, "\gauss"]
    & {\TheCpt} \arrow[loop right, "\shift"]
\end{tikzcd}
}
\end{center}

Every element $\tttheta$ of $\TheBi$ induces a bi-infinite sequence of irrationals 
\[
\theta_{n} = \mathfrak{X}\circ \textnormal{proj} \circ \shift^n(\tttheta) \in \Irrat, \quad n \in \Z,
\]
where $\gauss(\theta_n) = \theta_{n+1}$ for all $n$.
The bi-infinite tower of sector renormalizations 
\[
\{ \rotate_{\theta_n}: \overline{\D} \to \overline{\D} \}_{n \in \Z}
\]
can be put into a single dynamical plane as follows.

\begin{definition}
    For $\tttheta = \seq{ (\varepsilon_n, \bar{a}_n) }_{n \in \Z} \in \TheBi$, we define the \emph{(full) cascade} associated to $\tttheta$ to be the abelian group $F=F_{\tttheta}$ of translations generated by
    $\{F^{[n]}=F^{[n]}_{\tttheta}\}_{n \in \Z}$ where for $n \in \Z$,
    \[
        F^{[n]}(z) := z - \varepsilon_{n+1} l_{[n]} \quad \text{ where } l_{[n]}:= l_{[n]}(\tttheta) := \begin{cases}
        |\theta_0 \ldots \theta_n| & \text{ if } n \geq 0, \\
        1 & \text{ if } n = -1, \\
        |\theta_{-1} \ldots \theta_{-n+1}|^{-1} & \text{ if } n \leq -2,
    \end{cases}
    \]
\end{definition}

From now on, let us fix $\tttheta = \seq{ (\varepsilon_n, \bar{a}_n) }_{n \in \Z} \in \TheBi$.
For $n \in \Z$, denote $b_n = b_n(\tttheta)$.

\begin{lemma}
\label{lem:FQn}
    For all $n \in \Z$, we have
    \[
        F^{[n]} = (F^{[n-1]})^{b_n} \circ (F^{[n-2]})^{-\varepsilon_{n-1}\varepsilon_n}.
    \]
\end{lemma}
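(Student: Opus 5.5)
Since all $F^{[n]}$ are translations, composition corresponds to adding translation amounts. The identity to prove is therefore the scalar relation
\[
-\varepsilon_{n+1} l_{[n]} = -b_n \varepsilon_n l_{[n-1]} + \varepsilon_{n-1}\varepsilon_n \varepsilon_{n-1} l_{[n-2]},
\qquad\text{i.e.}\qquad
\varepsilon_{n+1} l_{[n]} = b_n \varepsilon_n l_{[n-1]} - \varepsilon_n l_{[n-2]},
\]
for all $n\in\Z$. Multiplying by $\varepsilon_n$, this is exactly the identity $(1_n)$ of Proposition~\ref{prop:ln}, namely $l_{[n-2]} = b_n l_{[n-1]} - \varepsilon_n\varepsilon_{n+1} l_{[n]}$. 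The plan is to verify this identity uniformly for all $n\in\Z$ with the bi-infinite definition of $l_{[n]}$, which I read as the uniform rule $l_{[m]}/l_{[m-1]} = |\theta_m|$ for all $m\in\Z$. That is, $l_{[-2]} = |\theta_{-1}|^{-1}$, and in general $l_{[n]}$ for $n\le -2$ is the reciprocal product of $|\theta_{-1}|,\dots,|\theta_{n+1}|$ (the paper's index range should be read this way).

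The single key input is the recursion (\ref{eqn:b-induction}), $\theta_{n} = -1/\theta_{n-1} + \varepsilon_n b_n$. This holds for all $n\in\Z$, because $\gauss(\theta_{n-1}) = \theta_n$ along the bi-infinite orbit and Lemma~\ref{lem:r-shift} applies to each $\theta_{n-1}$. Here $\varepsilon_n = \varepsilon(\theta_{n-1})$ and $b_n = b(\theta_{n-1})$ are the correct symbols, since $\theta_{n-1} = \mathfrak{X}(\mathrm{proj}\,\shift^{n-1}\tttheta)$ and Theorem~\ref{prop:projection-to-irrationals} identifies the first symbol of this sequence with $(\varepsilon(\theta_{n-1}),\bar a(\theta_{n-1}))$. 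Writing $\theta_{n-1} = \varepsilon_n|\theta_{n-1}|$ and $\theta_n = \varepsilon_{n+1}|\theta_n|$ and multiplying the recursion by $\theta_{n-1}$ gives
\[
\varepsilon_n = b_n \theta_{n-1} - \varepsilon_{n+1}|\theta_{n-1}\theta_n|,
\]
as in the proof of Proposition~\ref{prop:ln}.

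Multiplying this by $\varepsilon_n l_{[n-2]}$ and using $|\theta_{n-1}|\,l_{[n-2]} = l_{[n-1]}$ and $|\theta_{n-1}\theta_n|\,l_{[n-2]} = l_{[n]}$ yields $(1_n)$, which is the desired scalar relation. The only point needing care, and the mild obstacle, is checking the ratio rule $l_{[m]} = |\theta_m|\, l_{[m-1]}$ across the three cases of the definition, including the boundary indices $m=0$ ($l_{[0]} = |\theta_0| = |\theta_0|\cdot l_{[-1]}$) and $m=-1$ ($l_{[-1]} = 1 = |\theta_{-1}|\cdot l_{[-2]}$). Once this rule holds, the computation is index-free and valid for every $n\in\Z$. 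Translating back, the composition $(F^{[n-1]})^{b_n}\circ (F^{[n-2]})^{-\varepsilon_{n-1}\varepsilon_n}$ is the translation by $-b_n\varepsilon_n l_{[n-1]} + \varepsilon_n l_{[n-2]} = -\varepsilon_{n+1} l_{[n]}$, which is $F^{[n]}$.
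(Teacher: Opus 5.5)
Your proof is correct and is essentially the paper's argument. You rewrite Lemma~\ref{lem:r-shift} as $\varepsilon_n = b_n\theta_{n-1} - \varepsilon_{n+1}|\theta_{n-1}\theta_n|$, scale by $\varepsilon_n l_{[n-2]}$, and use $l_{[m]} = |\theta_m|\,l_{[m-1]}$ to reach the signed relation $\varepsilon_{n+1}l_{[n]} = b_n\varepsilon_n l_{[n-1]} - \varepsilon_n l_{[n-2]}$ on translation lengths. Your reading of the $n\le -2$ case as $l_{[n]} = |\theta_{-1}\cdots\theta_{n+1}|^{-1}$ is the intended one, and your check that the recursion and ratio rule hold for every $n\in\Z$ makes explicit what the paper leaves implicit.
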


\begin{proof}
    From Lemma \ref{lem:r-shift}, we have $\theta_n = \varepsilon_n b_n-\frac{1}{\theta_{n-1}}$.
    Therefore,
    \begin{align*}
        \varepsilon_{n+1} |\theta_{n-1}\theta_n| = |\theta_{n-1}| \left( \varepsilon_n b_n-\frac{1}{\theta_{n-1}} \right) = b_n \cdot \varepsilon_n |\theta_{n-1}| - \varepsilon_{n-1} \varepsilon_n \cdot \varepsilon_{n-1}.
    \end{align*}
    By multiplying both sides by $l_{[n-2]}$, we obtain
    \[
        \varepsilon_{n+1} l_{[n]} = b_n l_{[n-1]} - \varepsilon_{n-1} \varepsilon_{n} l_{[n-2]}.
    \]
    This implies the desired equation.
\end{proof}

Consider the time group $\Tbold^{\textnormal{gp}}=\Tbold^{\textnormal{gp}}_{\tttheta}$ of $\tttheta$ defined in the previous section.

\begin{proposition}
\label{prop:parametrization}
    The time group $\Tbold^{\textnormal{gp}}$ of $\tttheta$ is isomorphic to the full cascade $F$ associated to $\tttheta$ via the identification $Q_{[n]} \mapsto F^{[n]}$ for all $n \in \Z$.
\end{proposition}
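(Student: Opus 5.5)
The plan is to define a homomorphism from the presented group and check that it is bijective. First, the assignment $Q_{[n]} \mapsto F^{[n]}$ extends to a well-defined surjective group homomorphism $\Phi: \Tbold^{\textnormal{gp}} \to F$. Since $\tttheta \in \TheBi$, every $\bar{a}_{n+1}$ is finite, so the defining relations of $\Tbold^{\textnormal{gp}}$ are $Q_{[n+1]} = b_{n+1} Q_{[n]} - \varepsilon_n\varepsilon_{n+1} Q_{[n-1]}$ for every $n \in \Z$. By Lemma \ref{lem:FQn} (shifted by one index), the translations $F^{[n]}$ satisfy exactly the same relations. Because $F$ is abelian and $\Tbold^{\textnormal{gp}}$ is the quotient of the free abelian group on $\{Q_{[n]}\}$ by these relations, the universal property gives $\Phi$. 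Surjectivity is immediate, since $F$ is generated by the $F^{[n]}$.

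Second, I would reduce injectivity to a rank-two statement. The relations let us express every $Q_{[m]}$, for all $m \in \Z$, as an integral combination of any two consecutive generators. Going forward uses the recurrence directly. Going backward uses $Q_{[n-1]} = \varepsilon_{n-1}\varepsilon_n(b_n Q_{[n-1]} - Q_{[n]})$, shifted appropriately, which has integer coefficients because $\varepsilon_{n-1}\varepsilon_n = \pm 1$ and the matrix $\begin{bmatrix} b & -\delta \\ 1 & 0\end{bmatrix}$ is unimodular. Hence, for each fixed $n$, $\Tbold^{\textnormal{gp}}$ is generated by $Q_{[n-1]}$ and $Q_{[n]}$; it is cyclic-free of rank at most $2$, and in fact a quotient of $\Z^2$. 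The identification with $F$ then reduces to showing that $\Phi$ is injective on $\Z Q_{[-1]} \oplus \Z Q_{[0]}$, taking $n = 0$.

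Third, I would verify that injectivity. Identifying translations with real numbers, $F^{[-1]} \leftrightarrow -\varepsilon_0 l_{[-1]} = -\varepsilon_0$ and $F^{[0]} \leftrightarrow -\varepsilon_1 |\theta_0|$. If $c_{-1}F^{[-1]} + c_0 F^{[0]} = 0$ with integers $c_{-1}, c_0$ not both zero, then $|\theta_0|$ would be rational. This contradicts $\theta_0 \in \Irrat$, which holds by Theorem \ref{prop:projection-to-irrationals}. So $\Phi$ restricted to the subgroup generated by $Q_{[-1]}, Q_{[0]}$ is injective.

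Combining the second and third steps, any element of $\Tbold^{\textnormal{gp}}$ equals $c_{-1}Q_{[-1]} + c_0Q_{[0]}$, and its image vanishes only when $c_{-1} = c_0 = 0$. This also shows a posteriori that $\Tbold^{\textnormal{gp}} \cong \Z^2$. The main obstacle is the bookkeeping in the second step: one must check carefully that the backward recursion stays integral for all negative indices, and that the rewriting to $Q_{[-1]}, Q_{[0]}$ is consistent with the presentation. This holds because the relations themselves are exactly the unimodular substitutions.
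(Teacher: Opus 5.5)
Your proposal is correct and follows essentially the paper's argument: the homomorphism comes from Lemma~\ref{lem:FQn}, and injectivity follows by reducing any element to a combination of two consecutive generators and invoking irrationality. The paper uses a moving window $Q_{[n]},Q_{[n+1]}$ with only the forward recurrence and the irrationality of $\theta_{n+1}$, whereas you fix $Q_{[-1]},Q_{[0]}$ and also use the backward recurrence, which additionally exhibits $\Tbold^{\textnormal{gp}}\cong\Z^2$; note only the index slip that the backward relation should read $Q_{[n-2]}=\varepsilon_{n-1}\varepsilon_n\,(b_nQ_{[n-1]}-Q_{[n]})$.
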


\begin{proof}
    According to the previous lemma, there is a well-defined surjective homomorphism 
    \[
    \phi: \Tbold^{\textnormal{gp}} \to F, \quad P \mapsto F^P,
    \]
    where $F^{Q_{[n]}} = F^{[n]}$ for every $n \in \Z$.
    It remains to check that $\phi$ is injective.
    Pick $P \in \Tbold^{\textnormal{gp}}$, so $P$ can be written as an integral combination of $Q_{[n]}, Q_{[n+1]}, \ldots, Q_{[n+k]}$ for some integers $n \in \Z$ and $k \geq 0$.
    By the recurrence relation, we can rewrite $P$ as
    $c_0 Q_{[n]} + c_1 Q_{[n+1]}$ for some integer coefficients $c_0, c_1$.
    Hence,
    \[
        F^P(z) = z - \varepsilon_{n+1} c_0 l_{[n]} - \varepsilon_{n+2} c_1 l_{[n+1]} = z -  \left( \varepsilon_{n+1} c_0 + \varepsilon_{n+1} c_1 |\theta_{n+1}| \right) l_{[n]}.
    \]
    Since $\theta_{n+1}$ is irrational, the equation $ \varepsilon_{n+1} c_0 + \varepsilon_{n+1} c_1 |\theta_{n+1}| = 0$ only has trivial solution $c_0=c_1 = 0$.
    This implies that $F^P$ is the identity map if and only if $P = 0$.
    Therefore, $\phi$ is injective.
\end{proof}

As a consequence of this proposition, we can parametrize $F$ in terms of elements of $\Tbold^{\textnormal{gp}}$ and write 
\[
    F = (F^P)_{P \in \Tbold^{\textnormal{gp}}} \qquad \text{where} \qquad
    F^{Q_{[n]}} \equiv F^{[n]}.
\]
From now on, we will denote
\[
    V_P := F^P(0) \qquad \text{ for } P \in \Tbold^{\textnormal{gp}}.
\]
By Propositions \ref{prop:chronological-order-01} and \ref{prop:chronological-order-02}, $F$ admits two natural total orders, namely the chronological order $<$ and the left-right order $\lhd$.
The latter is called the left-right order for the following reason.

\begin{proposition}
\label{prop:real-ordering}
    For any two elements $P_1$ and $P_2$ in $\Tbold^{\textnormal{gp}}$, 
    \[
    V_{P_1} < V_{P_2} \quad \text{ if and only if } \quad P_1 \lhd P_2.
    \]
\end{proposition}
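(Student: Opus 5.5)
The plan is to reduce the statement to a sign computation on the generators of the positive cone of $\lhd$. The map $P \mapsto V_P = F^P(0)$ is a group homomorphism $\Tbold^{\textnormal{gp}} \to (\R,+)$, because $F^P$ is translation by $V_P$ and $F^{P_1+P_2} = F^{P_1}\circ F^{P_2}$. By Proposition \ref{prop:parametrization} it is injective. Hence pulling back the usual order of $\R$ gives a translation-invariant total order $\prec$ on $\Tbold^{\textnormal{gp}}$ with positive cone $\Pi = \{P : V_P > 0\}$.

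By Proposition \ref{prop:left-right-order}, the positive cone of $\lhd$ is $\Upsilon$ when $\varepsilon_1=-1$ and $-\Upsilon$ when $\varepsilon_1=+1$, and $\Tbold^{\textnormal{gp}} = \Upsilon \sqcup \{0\} \sqcup -\Upsilon$. So it suffices to show $\Upsilon \subset \Pi$ (respectively $-\Upsilon \subset \Pi$). Indeed, if $\Upsilon\subset\Pi$, then $-\Upsilon \subset -\Pi$, and since $\Pi$ and $-\Pi$ are disjoint, $\Pi = \Upsilon$ exactly. Two translation-invariant orders with the same positive cone coincide, which gives the proposition.

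Since $\tttheta \in \TheBi$, every $a_m$ is finite. Therefore $\Upsilon$ is generated by the $Q'_m = (-1)^m Q_m$ alone, and everything reduces to the claim
\[
\operatorname{sgn} V_{Q_m} = -\varepsilon_1 (-1)^m \qquad \text{for all } m\in\Z .
\]
I will verify this directly from the definitions $V_{Q_{[n]}} = -\varepsilon_{n+1} l_{[n]}$, $Q_{t_n} = Q_{[n]}$, and $Q_{t_n-1} = Q_{[n]} - Q_{[n-1]}$ when $\varepsilon_n\varepsilon_{n+1}=+1$.

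The check proceeds by induction on $n$ along the sequence $t_n$, upward for $n\ge 1$ and downward for $n\le -1$.
\begin{itemize}
\item Base case: $V_{Q_0} = V_{Q_{[0]}} = -\varepsilon_1 l_{[0]}$, which has sign $-\varepsilon_1$, as required.
\item If $\varepsilon_n\varepsilon_{n+1} = -1$, then $t_n = t_{n-1}+1$. The sign of $V_{Q_{[n]}} = -\varepsilon_{n+1} l_{[n]}$ is opposite to that of $V_{Q_{[n-1]}} = -\varepsilon_n l_{[n-1]}$, which matches the factor $(-1)^m$.
\item If $\varepsilon_n\varepsilon_{n+1} = +1$, then $t_n = t_{n-1}+2$. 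The value $V_{Q_{t_n}}$ has the same sign $-\varepsilon_n$ as $V_{Q_{t_{n-1}}}$, which is correct for a step of two. The intermediate generator has value
\[
V_{Q_{t_n-1}} = -\varepsilon_n l_{[n]} + \varepsilon_n l_{[n-1]} = \varepsilon_n\,(l_{[n-1]} - l_{[n]}),
\]
whose sign is $+\varepsilon_n$, the opposite one, as required.
\end{itemize}
The last case uses the strict inequality $l_{[n]} < l_{[n-1]}$. This holds because $l_{[n]} = |\theta_n|\, l_{[n-1]}$ with $|\theta_n|<\tfrac12$, and one should check that the same relation holds for negative indices under the paper's convention for $l_{[n]}$ with $n \le -2$.

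The main obstacle is bookkeeping rather than ideas. The substantive points are the negative indices, where one must confirm that the definition of $l_{[n]}$ for $n\le -2$ still gives $l_{[n]} = |\theta_n|\, l_{[n-1]}$ (consistent with Lemma \ref{lem:FQn}), and keeping the $\varepsilon_1=\pm1$ conventions for $\Upsilon$ versus $-\Upsilon$ straight. Once the sign pattern is established, the injectivity from Proposition \ref{prop:parametrization} and the trichotomy from Proposition \ref{prop:left-right-order} finish the proof formally.
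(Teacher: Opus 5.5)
Your argument is correct and is essentially the paper's: both pull back the order of $\R$ along the injective homomorphism $P\mapsto V_P$ from Proposition \ref{prop:parametrization} and identify it with $\lhd$ using the sign computations $V_{Q_{[n]}}=-\varepsilon_{n+1}l_{[n]}$ and $V_{Q_{[n]}-Q_{[n-1]}}=\varepsilon_n(l_{[n-1]}-l_{[n]})$. The paper verifies properties (1)--(2) of Proposition \ref{prop:left-right-order} (explicitly only at $n=0$) and invokes uniqueness, while you verify positivity on the generators of $\Upsilon$ for all indices via the parity of $t_n$ and invoke the trichotomy $\Tbold^{\textnormal{gp}}=\Upsilon\sqcup\{0\}\sqcup(-\Upsilon)$, which is what that uniqueness rests on anyway; and the relation $l_{[n]}=|\theta_n|\,l_{[n-1]}$ you flag for $n\le -2$ does hold, since the displayed definition has an index typo and should read $|\theta_{-1}\cdots\theta_{n+1}|^{-1}$, exactly as the proof of Lemma \ref{lem:FQn} requires.
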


\begin{proof}
    By Proposition \ref{prop:parametrization}, there does exist a well-defined strict total order $\lhd$ such that $P_1 \lhd P_2$ if and only if $V_{P_1} < V_{P_2}$.
    It is clear that $\lhd$ is translation-invariant since $F$ simply acts as translations on the real line.
    To prove the proposition, we just need to check that $\lhd$ satisfies properties (1) and (2) in Proposition \ref{prop:left-right-order}. Property (3) is inapplicable because $a_n < \infty$ for all $n$ in our current setting.
    
    Property (1) is clear: since $V_{Q_0} = -\varepsilon_1 l_{[0]}$, then $V_{Q_0} < 0$ if $\varepsilon_1 = +1$ and $V_{Q_0} > 0$ if $\varepsilon_1 = -1$.
    Let us check property (2) for $n=0$; the proof for every other $n$'s will be analogous.
    There are two cases.
    \begin{itemize}
        \item Suppose $\varepsilon_0 \varepsilon_1 = -1$. Since $V_{Q_0} = - \varepsilon_1 l_{[0]}$ and $V_{Q_{-1}} = - \varepsilon_0 = \varepsilon_1$, then $0 < \varepsilon_1 V_{-Q_{0]}} < \varepsilon_1 V_{Q_{-1}}$.
        This implies that $-Q_{0}$ is $\lhd$-between $0$ and $Q_{-1}$.
        \item Suppose $\varepsilon_0 \varepsilon_1 = +1$. Then, 
    \[
        V_{Q_{-2}} = V_{Q_{[-1]}} = - \varepsilon_1 \quad \text{and} \quad V_{Q_{-1}} = V_{Q_{[0]}-Q_{[-1]}} =\varepsilon_1 ( 1- l_{[0]}).
    \]
        Since $0<l_{[0]}<\frac{1}{2}$, then $0 < \varepsilon_1 V_{-Q_{-2}} < \varepsilon_1 V_{Q_{-1}} < \varepsilon_1 V_{-Q_0}$.
        This implies that $-Q_{-1}$ is $\lhd$-between $0$ and $Q_0$, and $-Q_{-1}$ is $\lhd$-between $0$ and $Q_{-2}$. \qedhere
    \end{itemize}
\end{proof}

\subsection{Renormalization triangulation}

For any two points $x$ and $y$ on the real line, we will denote by $[x,y]$ the closed real interval with endpoints $x$ and $y$.
For $n \in \Z$,
denote 
\[
    J_n = [V_{-Q_{[n]}}, V_{-Q_{[n]}+ \varepsilon_n \varepsilon_{n+1} Q_{[n-1]}}].
\]

\begin{lemma}
    For all $n \in \Z$, 
    \begin{enumerate}
        \item $[0, V_{-2 Q_{[n]}+\varepsilon_n \varepsilon_{n+1} Q_{[n-1]}}]$ is contained in the interior of $J_n$;
        \item $J_{n}$ is contained in the interior of $J_{n-1}$.
    \end{enumerate}
\end{lemma}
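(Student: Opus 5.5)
The plan is a direct computation. By Proposition \ref{prop:parametrization} each $V_P$ is an explicit real number, so both items reduce to elementary inequalities between the lengths $l_{[n]}$. The only input I need is the ratio identity
\[
l_{[n]} = |\theta_n|\, l_{[n-1]} \qquad \text{for all } n \in \Z .
\]
For $n \geq 0$ this is immediate from the definition. For $n \leq -1$ I will read the definition of $l_{[n]}$ in the way that makes Lemma \ref{lem:FQn} hold, namely $l_{[n]} = |\theta_{n+1}\cdots\theta_{-1}|^{-1}$ for $n \leq -2$; the printed index range in the definition seems to be a typo for this. Since every $\theta_n \in \Irrat$, I get $0 < l_{[n]} < \tfrac12 l_{[n-1]}$ for every $n \in \Z$, and this is the key inequality.

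Next I compute the endpoints. Since $F^{[n]}(z) = z - \varepsilon_{n+1} l_{[n]}$, the map $P \mapsto V_P$ is additive and $V_{Q_{[n]}} = -\varepsilon_{n+1} l_{[n]}$. Put $\delta_n = \varepsilon_n\varepsilon_{n+1}$. Then
\[
V_{-Q_{[n]}} = \varepsilon_{n+1} l_{[n]}, \qquad
V_{-Q_{[n]} + \delta_n Q_{[n-1]}} = \varepsilon_{n+1} l_{[n]} - \delta_n \varepsilon_n l_{[n-1]} = \varepsilon_{n+1}\bigl(l_{[n]} - l_{[n-1]}\bigr).
\]
Hence $J_n = \varepsilon_{n+1}\,[\,l_{[n]} - l_{[n-1]},\, l_{[n]}\,]$, an interval of length $l_{[n-1]}$. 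Because $l_{[n]} < l_{[n-1]}$, the point $0$ lies in its interior.

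For item (1), the remaining endpoint is
\[
V_{-2Q_{[n]} + \delta_n Q_{[n-1]}} = \varepsilon_{n+1}\bigl(2 l_{[n]} - l_{[n-1]}\bigr).
\]
The key inequality gives $l_{[n]} - l_{[n-1]} < 2l_{[n]} - l_{[n-1]} < 0 < l_{[n]}$. Therefore the segment from $0$ to this point lies strictly inside $J_n$.

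For item (2), I multiply everything by $\varepsilon_n$. This gives $\varepsilon_n J_{n-1} = [\,l_{[n-1]} - l_{[n-2]},\, l_{[n-1]}\,]$ and $\varepsilon_n J_n = \delta_n [\,l_{[n]} - l_{[n-1]},\, l_{[n]}\,]$, and I split on the sign of $\delta_n$.

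If $\delta_n = +1$, the containment requires $l_{[n-1]} - l_{[n-2]} < l_{[n]} - l_{[n-1]}$ and $l_{[n]} < l_{[n-1]}$. The first follows from $l_{[n-2]} > 2 l_{[n-1]} > 2l_{[n-1]} - l_{[n]}$.

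If $\delta_n = -1$, then $\varepsilon_n J_n = [\,-l_{[n]},\, l_{[n-1]} - l_{[n]}\,]$. The containment requires $l_{[n-1]} - l_{[n-2]} < -l_{[n]}$, which follows from $l_{[n-2]} > 2 l_{[n-1]} > l_{[n-1]} + l_{[n]}$. It also requires $l_{[n-1]} - l_{[n]} < l_{[n-1]}$, which is trivial.

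All inequalities are strict, so $J_n$ lies in the interior of $J_{n-1}$. The only delicate step is the bookkeeping for negative indices, namely confirming that the ratio identity holds there. This is exactly what was used in the proof of Lemma \ref{lem:FQn}, so I will take it from there.
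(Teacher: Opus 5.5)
Your proof is correct and follows the paper's argument. It uses the same endpoint formulas $V_{-Q_{[n]}}=\varepsilon_{n+1}l_{[n]}$, $\varepsilon_{n+1}(l_{[n]}-l_{[n-1]})$ and $\varepsilon_{n+1}(2l_{[n]}-l_{[n-1]})$, together with the single inequality $l_{[n-1]}>2l_{[n]}$. Your reading $l_{[n]}=|\theta_{n+1}\cdots\theta_{-1}|^{-1}$ for $n\le -2$ is the one actually used in the proof of Lemma \ref{lem:FQn}.

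The only difference is cosmetic and occurs in item (2). Instead of your case split on $\varepsilon_n\varepsilon_{n+1}$, the paper notes that $\max_{x\in\partial J_n}|x| = l_{[n-1]}-l_{[n]} < l_{[n-1]} = \min_{x\in\partial J_{n-1}}|x|$. Hence $J_n$ lies in the symmetric interval $(-l_{[n-1]},l_{[n-1]})$, which is contained in the interior of $J_{n-1}$.
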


\begin{proof}
    We have
    $V_{-Q_{[n]}} = \varepsilon_{n+1} l_{[n]}$ and $V_{-Q_{[n]}+ \varepsilon_n \varepsilon_{n+1} Q_{[n-1]}} = \varepsilon_{n+1} (l_{[n]} - l_{[n-1]})$ and
    $V_{-2 Q_{[n]}+\varepsilon_n \varepsilon_{n+1} Q_{[n-1]}} = \varepsilon_{n+1} (2 l_{[n]} - l_{[n-1]})$.
    Then, item (1) follows from the inequality $l_{[n-1]} > 2l_{[n]}$.
    Item (2) follows from:
    \[
        \max_{x \in \partial J_n} |x| = l_{[n-1]} - l_{[n]} < l_{[n-1]} =  \min_{x \in \partial J_{n-1}} |x|. \qedhere
    \]
\end{proof}

The first part of the lemma above tells us that $J_n$ can be partitioned in two ways.
Firstly, we have $J_{n} = J_{n,0} \cup J_{n,1}$ where
    \[
        J_{n,0} = [V_{-Q_{[n]}+ \varepsilon_n \varepsilon_{n+1} Q_{[n-1]}},0] \quad \text{ and } \quad J_{n,1} = [0, V_{-Q_{[n]}}],
    \]
Secondly, we have $J_{n} = J'_{n,0} \cup J'_{n,1}$ where
\[
    J_{n,j}' := F^{-Q_{[n]} + j \varepsilon_n \varepsilon_{n+1} Q_{[n-1]}}(J_{n,j}), \qquad  j \in \{0,1\}.
\]
The two partitions are related by the commuting pair
\[
    \mathcal{F}_n = \left( F^{Q_{[n]}}: J'_{n,0} \to J_{n,0}, \enspace F^{Q_{[n]}- \varepsilon_n \varepsilon_{n+1} Q_{[n-1]}}: J'_{n,1} \to J_{n,1} \right).
\]

Let us outline how the cascade $F$ is related to $\{ \rotate_{\theta_n}: \overline{\D} \to \overline{\D} \}_{n \in \Z}$.
Denote by $-\overline{\UHP}$ the closed lower half plane. 
For any closed interval $J \subset \mathbb{R}$, denote the half strip
\[
W(J) := \{ z \in -\overline{\UHP} \: : \: \real z \in J \}.
\]
For $n \in \Z$ and $j \in \{0,1\}$, denote
\[
    W_n = W(J_n), \qquad W_{n,j} = W(J_{n,j}), \qquad W_{n,j}' = W(J'_{n,j}).
\]
Each $W_n$ is a strip of horizontal width $l_{[n-1]}$. The second part of the lemma above implies that we have a bi-infinite nest of half-strips
\[
    \ldots \subset W_2 \subset W_1 \subset W_0 \subset W_{-1} \subset \ldots
\]
whose union is $-\overline{\UHP}$ and whose nester intersection is the negative imaginary axis.
The commuting pair $\mathcal{F}_n$ mentioned before extends to a commuting pair on $W_n$:
    \[
        \mathcal{F}_n = \left( F^{Q_{[n]}}: W'_{n,0} \to W_{n,0}, \enspace F^{Q_{[n]}- \varepsilon_n \varepsilon_{n+1} Q_{[n-1]}}: W'_{n,1} \to W_{n,1} \right).
    \]
The next proposition draws the bridge between cascades and sector renormalization.
The proof is elementary calculation.

\begin{figure}    
    \centering
    \begin{tikzpicture}[scale=1.15]
        \draw[black,thick] (-5,3.5) -- (5,3.5);
        \draw[red,thick] (0,3.5) -- (0,1);
        
        \draw[blue,thick] (-4.23,3.5) -- (-4.23,1);
        \draw[blue,thick] (2.62,3.5) -- (2.62,1);
        \node [blue, font=\bfseries] at (-4.23,3.75) {\small $V_{Q_{[-1]}-Q_{[0]}}$};
        \node [blue, font=\bfseries] at (2.7,3.75) {\small $V_{-Q_{[0]}}$};
        \node [blue, font=\bfseries] at (-2,0.4) {\small $W_{0,0}$};
        \node [blue, font=\bfseries] at (1.3,0.4) {\small $W_{0,1}$};
        \draw[gray!70!blue] (-4.15,0.8) -- (-4.15,0.6) -- (-0.1,0.6) -- (-0.1,0.8);
        \draw[gray!70!blue] (0.1,0.8) -- (0.1,0.6) -- (2.5,0.6) -- (2.5,0.8);

        \draw[green!50!black,thick] (-1.61,3.5) -- (-1.61,1);
        \draw[green!50!black,thick] (1,3.5) -- (1,1);
        \node [green!50!black, font=\bfseries] at (-1.61,3.75) {\small $V_{Q_{[0]}-Q_{[1]}}$};
        \node [green!50!black, font=\bfseries] at (1.1,3.75) {\small $V_{-Q_{[1]}}$};
        \node [green!50!black, font=\bfseries] at (-0.8,2) {\small $W_{1,0}$};
        \node [green!50!black, font=\bfseries] at (0.55,2) {\small $W_{1,1}$};

        
        \node [red, font=\bfseries] at (0,3.75) {\small $0$};
        \draw[gray!60!black, -latex] (1,4.1) .. controls (0.35,4.7) and (-1,4.7) .. (-1.6,4.1);
        \draw[gray!60!black, -latex] (2.62,4.2) .. controls (0.6,5.5) and (-2,5.5) .. (-4.23,4.2);
        \node [gray!40!black, font=\bfseries] at (-0.7,4.8) {\small $F^{Q_{[0]}}$};
        \node [gray!40!black, font=\bfseries] at (1,5.3) {\small $F^{Q_{[-1]}}$};
\end{tikzpicture}
    \caption{Cascade $F=F_{\tttheta}$ with golden mean combinatorics $\tttheta= \seq{ \ldots, (+,2); (+,2), (+,2),\ldots }$}
    \label{fig:cascade}
\end{figure}

\begin{proposition}
    For $n \in \Z$, the map
\[
    \phi_n : W_n \to \overline{\D}, \quad \phi_n(z) = e^{-2\pi i z/ l_{[n-1]}}
\]
    satisfies the following properties.
\begin{enumerate}
    \item $\phi_n$ conformally sends the interior of $W_n$ to the open unit disk $\D$ minus the radial slit $\{ \arg z = -2\pi \theta_n \}$, and sends each of the horizontal sides of $W_n$ to this slit.
    \item $\phi_n$ projects the commuting pair $\mathcal{F}_n$ to the rotation $\rotate_{\theta_n}: \overline{\D} \to \overline{\D}$, and
    \item For $k \geq n+1$, $\phi_n$ maps $W_k$ conformally onto the sector $S_{\theta_n}^{k-n}$ and the commuting pair $\mathcal{F}_k$ projects to the first return map of $\rotate_{\theta_n}$ back to the sector $S_{\theta_n}^{k-n}$.
\end{enumerate}
\end{proposition}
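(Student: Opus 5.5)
We will verify the three items directly, using that $W_n$ is the half strip over $J_n$, that every $F^P$ is a real translation, and that $\phi_n$ is a branch of an exponential with period $l_{[n-1]}$.

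\emph{Item (1).} The endpoints of $J_n$ were computed in the proof of the preceding lemma: they are $\varepsilon_{n+1} l_{[n]}$ and $\varepsilon_{n+1}(l_{[n]}-l_{[n-1]})$. Hence $J_n$ has length exactly $l_{[n-1]}$ and contains $0$. On $-\overline{\UHP}$ we have $|\phi_n(z)| = e^{2\pi \operatorname{Im} z/l_{[n-1]}} \le 1$, and $\phi_n$ maps the half strip onto $\overline{\D}\setminus\{0\}$, with $z\to -i\infty$ corresponding to $0$. The map is injective on the interior, since the horizontal width equals the period $l_{[n-1]}$. The two vertical sides of $W_n$ are glued to a single radial segment. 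Its argument is $-2\pi \varepsilon_{n+1} l_{[n]}/l_{[n-1]} = -2\pi\varepsilon_{n+1}|\theta_n| = -2\pi\theta_n$, using $l_{[n]}/l_{[n-1]}=|\theta_n|$, which holds for all $n\in\Z$ by the definition of $l_{[n]}$ in both ranges. This gives the slit $\{\arg z=-2\pi\theta_n\}$. (The statement says ``horizontal sides''; these are the vertical sides of $W_n$ over $\partial J_n$.)

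\emph{Item (2).} On $W'_{n,0}$ the pair $\mathcal{F}_n$ acts by $F^{Q_{[n]}}$, which is translation by $-\varepsilon_{n+1}l_{[n]}$. On $W'_{n,1}$ it acts by translation by $-\varepsilon_{n+1}l_{[n]}+\varepsilon_n l_{[n-1]}$. The second translation differs from the first by the period $\varepsilon_n l_{[n-1]}$ of $\phi_n$. So both branches descend under $\phi_n$ to multiplication by $e^{2\pi i \varepsilon_{n+1} l_{[n]}/l_{[n-1]}} = e^{2\pi i\theta_n}$, which is $\rotate_{\theta_n}$. Since $J'_{n,0}\cup J'_{n,1}=J_n$ and the gluing of the sides is compatible, the projection is the rotation on all of $\overline{\D}$.

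\emph{Item (3).} We will argue by induction on $k-n$, and this is the step I expect to be the main obstacle, mostly because of bookkeeping.

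For $k=n+1$, the endpoints of $J_{n+1}$ are $\varepsilon_{n+2}l_{[n+1]}$ and $\varepsilon_{n+2}(l_{[n+1]}-l_{[n]})$. Their $\phi_n$-images have arguments $-2\pi\varepsilon_{n+2}|\theta_n\theta_{n+1}|$ and $2\pi\varepsilon_{n+2}|\theta_n|(1-|\theta_{n+1}|)$. I would then check, using $\theta_{n+1}=-1/\theta_n+\varepsilon_{n+1}b_{n+1}$ from Lemma~\ref{lem:r-shift}, that these are the arguments of $v_{-\bar{a}-1}$ and $v_{-\bar a}$ for $\theta=\theta_n$, so that $\phi_n(W_{n+1})=S_{\theta_n}$. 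The cleanest route is to compare with Proposition~\ref{prop:q[n]-02}(2) for $n=1$: $S^1$ is bounded by $I^{-q_{[1]}+\varepsilon_1\varepsilon_2 q_{[0]}}$ and $I^{-q_{[1]}}$. Under $\phi_n$ the point $V_P$ goes to $\rotate_{\theta_n}^{m}(1)$, where $m$ is the image of $P$ under the homomorphism sending $Q_{[n-1]}\mapsto 0$ and $Q_{[n]}\mapsto 1$. By the recurrence this homomorphism sends $Q_{[k]}\mapsto q_{[k-n]}(\theta_n)$, since $F^{Q_{[n-1]}}$ is a full period. Applying it to the endpoints $V_{-Q_{[k]}}$ and $V_{-Q_{[k]}+\varepsilon_k\varepsilon_{k+1}Q_{[k-1]}}$ of $J_k$ lands exactly on the radial segments bounding $S^{k-n}_{\theta_n}$ given by Proposition~\ref{prop:q[n]-02}(2), applied to $\theta_n$. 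This handles all $k$ at once, without a separate induction.

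Injectivity of $\phi_n$ on $W_k\subset W_n$, together with the matching widths $l_{[k-1]}=l_{[n-1]}\cdot l_{[k-1-n]}(\theta_n)$, gives the conformal isomorphism onto the sector. Finally, $\mathcal{F}_k$ consists of $F^{Q_{[k]}}$ and $F^{Q_{[k]}-\varepsilon_k\varepsilon_{k+1}Q_{[k-1]}}$. These project to $\rotate_{\theta_n}^{q_{[k-n]}}$ and $\rotate_{\theta_n}^{q_{[k-n]}-\varepsilon\varepsilon' q_{[k-n-1]}}$, which by the remark following Proposition~\ref{prop:q[n]-02} are precisely the two branches of the first return map to $S^{k-n}_{\theta_n}$. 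The domains match because $J'_{k,j}$ and $J_{k,j}$ are the preimages of $A^{k-n}_{j}$ and its image under that remark's description.
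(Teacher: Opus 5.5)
Your proof is correct and is exactly the elementary calculation the paper leaves to the reader. The homomorphism $Q_{[n-1]}\mapsto 0$, $Q_{[n]}\mapsto 1$ sends $Q_{[k]}\mapsto q_{[k-n]}(\theta_n)$, so $\phi_n$ semiconjugates $F^P$ to the corresponding iterate of the rotation by $\theta_n$, and this correctly reduces item (3) to Proposition~\ref{prop:q[n]-02}(2) and the remark after it. Two small touch-ups: in item (2) the branch on $W'_{n,1}$ translates by $\varepsilon_{n+1}(l_{[n-1]}-l_{[n]})$, not $-\varepsilon_{n+1}l_{[n]}+\varepsilon_n l_{[n-1]}$ (harmless, since it still differs from $F^{Q_{[n]}}$ by a period of $\phi_n$), and in item (3) the assignment of the two return times to the pieces $\phi_n(W'_{k,0})$, $\phi_n(W'_{k,1})$ does not follow from the remark alone. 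It is forced because the two candidate rotations differ by exactly the angular width of the sector, so each interior point can land back in the sector under only one of them.
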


In particular, the proposition above says that $\mathcal{F}_n$ is precisely the first return map of $F$ back to $W_n$. This gives us a triangulation of the lower half plane.

\begin{proposition}[Renormalization triangulation]
    For $n \in \Z$,
    \begin{enumerate}
        \item the collection of closed half-strips
        \[
            \mathcal{W}_n = 
            \{F^{-P}(W_{n,0}) \}_{0\leq P< Q_{[n]}} \cup
            \{F^{-P}(W_{n,1}) \}_{0\leq P<Q_{[n]}- \varepsilon_n \varepsilon_{n+1} Q_{[n-1]}} 
        \]
        partitions the closed lower half plane, i.e. the elements of $\mathcal{W}_n$ have pairwise disjoint interior and their union is $\mathbb{R}$;
        \item $\mathcal{W}_n$ is a refinement of $\mathcal{W}_{n-1}$: every element in $\mathcal{W}_{n-1}$ is a union of at least two consecutive half-strips in $\mathcal{W}_n$.
    \end{enumerate}
\end{proposition}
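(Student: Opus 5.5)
I would work one-dimensionally: every element of $\mathcal{W}_n$ is $W(J)$ for an interval $J\subset\mathbb{R}$, so it suffices to show that the intervals
\[
\{F^{-P}(J_{n,0})\}_{0\leq P<Q_{[n]}}\cup\{F^{-P}(J_{n,1})\}_{0\leq P<Q_{[n]}-\varepsilon_n\varepsilon_{n+1}Q_{[n-1]}}
\]
tile $\mathbb{R}$ and refine the corresponding tiling at level $n-1$. Here $0\le P<Q$ is read in the chronological order $<$ of Proposition \ref{prop:chronological-order-02}. Write $\delta_n=\varepsilon_n\varepsilon_{n+1}$. Via $\phi_n$ and the rescaling $z\mapsto z/l_{[n-1]}$, the level-$n$ picture should be the universal cover of the rotation picture at level $0$. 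So the plan is to transport the finite triangulations $\Delta^k_m$ from the proof of Proposition \ref{prop:q[n]-02}, along the nested strips $W_n\subset W_{n-1}\subset\cdots$, and then pass to the limit $m\to-\infty$.

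\textbf{Step 1 (the local tiling).} By the lemma on the $J_n$, the strip $W_{n-1}$ is tiled by $W_n$ together with its images under the level-$(n-1)$ return dynamics. Concretely, by the previous proposition, $\phi_{n-1}$ maps $W_n$ onto the sector $S^1_{\theta_{n-1}}$ and conjugates $\mathcal{F}_n$ to the first return map. The first-return structure of the sector $S_\theta$ (the explicit formula for $\textnormal{FRM}_\theta$ in Section 2, with return times $\bar a$ and $\bar a+1$) then says the following. Lifting through $\phi_{n-1}$ on $W_{n-1}$, the strip $W_{n-1}$ is the union of the half-strips $F^{-jQ_{[n-1]}}(W_{n,i})$ for $0\le j<b_{n}-\ldots$, with the appropriate ranges depending on $i$ and $\delta_n$. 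Rewriting these multiples via Lemma \ref{lem:FQn}, one sees they are exactly the pieces $F^{-P}(W_{n,i})$ with $P$ in the interval $[0,Q_{[n]})$, respectively $[0,Q_{[n]}-\delta_n Q_{[n-1]})$, but \emph{restricted to those $P$ that are multiples of $Q_{[n-1]}$}. This computation is elementary once written with $V_{\pm Q_{[n]}}=\mp\varepsilon_{n+1}l_{[n]}$.

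\textbf{Step 2 (induction downward).} Iterating Step 1 from $W_{n}$ up to $W_{n-k}$, I get a tiling of $W_{n-k}$ by translates $F^{-P}(W_{n,i})$. Each $P$ is a nonnegative combination $\sum_j c_jQ_{[n-1-j]}$ with digits bounded as in Step 1. These are precisely the times $P$ with $0\le P<Q_{[n]}$ (respectively $<Q_{[n]}-\delta_nQ_{[n-1]}$) that are "reachable before exiting $W_{n-k}$". Since $\bigcup_k W_{n-k}$ is the whole closed lower half plane, the union of all these tiles is everything. Disjointness of interiors is inherited at each finite stage, which gives (1). Part (2) follows from Step 1 applied at level $n$: each tile $F^{-P}(W_{n-1,i})$ is the $F^{-P}$-image of a union of at least two consecutive level-$n$ tiles, namely $W_{n,0}\cup W_{n,1}$ together with its return translates. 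We have $b_n\ge 2$, and $J_n$ is interior to $J_{n-1}$. Translation by $P$ preserves the form of the index set, because the sets of admissible $P$ are closed under the relevant shifts.

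\textbf{Main obstacle.} The hard part is the bookkeeping that the index set in (1), defined order-theoretically as all group elements in $[0,Q_{[n]})$, coincides with the set of times produced by the iterative construction: nothing is missing and nothing is counted twice. I would attack this with the slow generators $Q_m$ and an Ostrowski-type digit expansion. Every $P$ with $0<P$ has a unique greedy expansion $P=\sum c_mQ_m$ with $0\le c_m\le a_{m+1}$ and the usual rule $c_m=a_{m+1}\Rightarrow c_{m-1}=0$. I would prove this using the recursion $Q_m=a_mQ_{m-1}+Q_{m-2}$ and the argument of Proposition \ref{prop:chronological-order-02}. I would then check that "$P<Q_{[n]}$" corresponds exactly to the top-digit constraint appearing in Step 1. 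Proposition \ref{prop:real-ordering} converts all positional statements about $V_P$ into $\lhd$-statements, so overlaps can be ruled out combinatorially.
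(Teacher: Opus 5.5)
Your plan of pulling back the finite rotation triangulations and exhausting by the $W_{n-k}$ is reasonable in spirit. However, the one step you make concrete is wrong, and the real content of the statement is left unproved.

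\textbf{Step 1 is incorrect.} You claim that $W_{n-1}$ is tiled by the strips $F^{-jQ_{[n-1]}}(W_{n,i})$, so that the relevant times are multiples of $Q_{[n-1]}$. But $F^{-Q_{[n-1]}}$ translates by $+\varepsilon_n l_{[n-1]}$, while $J_{n-1}=\varepsilon_n[\,l_{[n-1]}-l_{[n-2]},\,l_{[n-1]}\,]$ reaches only a distance $l_{[n-1]}$ beyond $0$ in that direction. Write $\delta_n=\varepsilon_n\varepsilon_{n+1}$. Then $F^{-Q_{[n-1]}}(W_{n,0})$ already lies outside $W_{n-1}$ when $\delta_n=-1$, and $F^{-Q_{[n-1]}}(W_{n,1})$ does when $\delta_n=+1$. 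The long part of $W_{n-1}$ on the $-\varepsilon_n$ side is reached only after the deck correction $F^{\delta_{n-1}Q_{[n-2]}}$ of $\phi_{n-1}$. So the correct times are $jQ_{[n-1]}$ before the wrap and $jQ_{[n-1]}-\delta_{n-1}Q_{[n-2]}$ after it. They are generated by both maps of $\mathcal F_{n-1}$, not by $F^{Q_{[n-1]}}$ alone. In particular, your Step 2 claim that all times are nonnegative combinations of $Q_{[n-1]},Q_{[n-2]},\dots$ is false: the coefficient of $Q_{[n-2]}$ is $-1$ when $\delta_{n-1}=+1$.

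\textbf{The main content is not proved.} Item (1) says exactly that the set of times produced at some finite stage equals the order interval $\{P:0\le P<R_i\}$, where $R_0=Q_{[n]}$ and $R_1=Q_{[n]}-\delta_nQ_{[n-1]}$. Item (2) needs that an admissible level-$(n-1)$ time plus an internal level-$n$ time stays admissible. You only assert these (``reachable before exiting'', ``closed under the relevant shifts'') and defer them to an Ostrowski-type expansion you do not carry out. Doing so would require existence, uniqueness and finiteness of digit expansions in the bi-infinite family $\{Q_m\}_{m\in\mathbb Z}$, plus a matching of those expansions with the towers.

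\textbf{How the paper avoids this.} It deduces the proposition from the fact, noted just before it, that $\mathcal F_n$ is the first return map of $F$ to $W_n$. Concretely:
\begin{itemize}
\item $\phi_{n-1}$ projects $F^{Q_{[n-1]}}$ to the rotation by $\theta_{n-1}$, and its deck group $\langle F^{Q_{[n-2]}}\rangle$ moves $W_n$ off itself. Hence for $z\in W_n$, the times $P$ with $F^P(z)\in W_n$ correspond bijectively to the visits of the rotation orbit to $S^1_{\theta_{n-1}}$.
\item These times therefore form a chain whose consecutive gaps are $R_0$ or $R_1$, both chronologically positive.
\item For general $z$, the rotation picture at a level $m$ with $z\in W_m$ gives a visit at a nonnegative time.
\item Let $P_*$ be the first nonnegative visit, with $F^{P_*}(z)\in W_{n,i}$. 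The visit at $P_*-R_i$ forces $P_*<R_i$, and the gap structure gives uniqueness. This is (1).
\item For (2), the endpoints of the tiles of $\mathcal W_n$ are exactly $\{V_{-P}:0\le P<R_0+R_1\}$. This is an initial segment whose bound $2Q_{[n]}-\delta_nQ_{[n-1]}$ exceeds $2Q_{[n-1]}-\delta_{n-1}Q_{[n-2]}$, so $\mathcal W_n$ refines $\mathcal W_{n-1}$.
\item A visit to $W_n$ cannot precede the first visit to $W_{n-1}$. So the count ``at least two'' reduces to a direct check on $W_{n-1,0}$ and $W_{n-1,1}$.
\end{itemize}
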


As a consequence, we also have:

\begin{corollary}[Proper discontinuity]
    For $n \in \Z$, 
    \[
        \text{if} \quad 0<P<Q_{[n]} \text{ or } - Q_{[n]} < P < 0, \qquad \text{then} \quad |V_P| > l_{[n]}.
    \]
    In particular, if $P$ is small, then $|V_P|$ is large.
\end{corollary}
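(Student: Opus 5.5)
The plan is to read the corollary off the renormalization triangulation $\mathcal{W}_n$, using the identification $P \mapsto V_P = F^P(0)$ from Proposition \ref{prop:parametrization}. Because $F^{-P}(z) = z - V_P$, the half-strip $F^{-P}(W_{n,j})$ is $W(J_{n,j})$ translated by $-V_P$. By symmetry ($V_{-P} = -V_P$, and $0<P<Q_{[n]}$ iff $-Q_{[n]}<-P<0$) it suffices to treat $0<P<Q_{[n]}$.

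\textbf{Step 1.} Fix $0<P<Q_{[n]}$. By Proposition \ref{prop:chronological-order-02}, $Q_{[n]}- \varepsilon_n\varepsilon_{n+1}Q_{[n-1]} \geq Q_{[n]} - Q_{[n-1]} > 0$. Since $P<Q_{[n]}$, the strip $F^{-P}(W_{n,0})$ belongs to $\mathcal{W}_n$. I would then argue that the point $-V_P$ lies in $\mathcal{W}_n$-tiles other than those containing $0$. The strips $W_{n,0}$ and $W_{n,1}$ are indexed by $P=0$ and both contain $0$ on their boundary. Pairwise disjointness of interiors, from part (1) of the renormalization triangulation, then gives that $F^{-P}(W_{n,0})$ is disjoint from the interior of $W_n = W_{n,0}\cup W_{n,1}$.

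\textbf{Step 2.} From this I will extract the bound. The strip $F^{-P}(W_{n,0})$ has horizontal base $J_{n,0} - V_P$, which contains the point $-V_P$ (the image of $0$). That base must avoid the interior of $J_n$. Writing $J_n = [\varepsilon_{n+1}(l_{[n]}-l_{[n-1]}),\, \varepsilon_{n+1}l_{[n]}]$, the point $-V_P$ lies outside this interval. The interval contains $[-(l_{[n-1]}-l_{[n]}),\, l_{[n]}]$ or its mirror image, so $|V_P| \geq \min(l_{[n]},\, l_{[n-1]}-l_{[n]})$. Since $l_{[n-1]}>2l_{[n]}$ (the first lemma of this subsection), this minimum is $l_{[n]}$, giving $|V_P|\geq l_{[n]}$.

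\textbf{Step 3.} To upgrade to the strict inequality, note that $|V_P| = l_{[n]}$ would force $-V_P = \pm l_{[n]}$. The endpoint $\varepsilon_{n+1}l_{[n]} = V_{-Q_{[n]}}$ of $J_n$ would then give $V_P = V_{Q_{[n]}}$, hence $P = Q_{[n]}$ by the injectivity in Proposition \ref{prop:parametrization}, a contradiction. The opposite value $-\varepsilon_{n+1}l_{[n]}$ lies in the interior of $J_n$, because $l_{[n-1]}-l_{[n]}>l_{[n]}$, and is therefore excluded by Step 1.

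\textbf{Main obstacle.} The delicate point is Step 1: whether $0$ itself, rather than merely the tile's interior, is separated from $-V_P$. The tile $F^{-P}(W_{n,0})$ could a priori share a boundary line with $W_n$ exactly at $-V_P$. I would handle this by noting that the tiles in $\mathcal{W}_n$ adjacent to $W_n$ are precisely the translates whose base endpoints are $V_{-Q_{[n]}}$ and $V_{-Q_{[n]}+\varepsilon_n\varepsilon_{n+1}Q_{[n-1]}}$. A coincidence $-V_P = $ endpoint then forces $P$ to equal $Q_{[n]}$ or $Q_{[n]}-\varepsilon_n\varepsilon_{n+1}Q_{[n-1]}$. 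The second case gives $|V_P| = l_{[n-1]}-l_{[n]} > l_{[n]}$ anyway.
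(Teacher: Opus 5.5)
Your proposal is correct and follows the route the paper intends. The paper gives no separate proof and presents the corollary as an immediate consequence of the renormalization triangulation. Your Steps 1--3 make that deduction explicit: the tile $F^{-P}(W_{n,0})$, whose base has endpoint $-V_P$, cannot meet the interior of $W_n$; then $l_{[n-1]}>2l_{[n]}$ gives the non-strict bound, and injectivity of $P\mapsto F^P$ excludes equality. Your ``main obstacle'' paragraph is redundant, since Step 3 already handles the boundary case.
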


For periodic combinatorics, these results first appeared in \cite[Section 2.13]{DL23}.
Our exposition is slightly different in style.

\begin{proposition}[Self-similarity]
    Suppose $\tttheta \in \TheBi$ is periodic with some period $N \geq 1$.
    Then, the cascade $F$ is self-similar: for all $n \in \Z$,
    \begin{enumerate}
        \item for all $z$, we have
        \[
            F^{[n+N]}(z) = l_{[N-1]} F^{[n]}\left( \frac{z}{l_{[N-1]}} \right);
        \]
        \item $\mathcal{W}_{n+N} = l_{[N-1]} \mathcal{W}_{n}$.
    \end{enumerate}
\end{proposition}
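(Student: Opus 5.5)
Periodicity of $\tttheta$ with period $N$ means $(\varepsilon_{n+N},\bar{a}_{n+N}) = (\varepsilon_n,\bar{a}_n)$ for all $n\in\Z$. Consequently $b_{n+N}=b_n$ and, since $\theta_n = \mathfrak{X}\circ\textnormal{proj}\circ\shift^n(\tttheta)$, also $\theta_{n+N}=\theta_n$ for all $n \in \Z$.

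\textbf{Step 1: the scaling identity.} The core of the argument is the identity
\[
l_{[n+N]} = l_{[N-1]}\, l_{[n]} \qquad \text{for all } n\in\Z,
\]
with $l_{[N-1]} = |\theta_0\cdots\theta_{N-1}|$. I would verify it from the three-case definition of $l_{[n]}$.
\begin{itemize}
\item For $n\ge 0$ it is immediate: $l_{[n+N]} = |\theta_0\cdots\theta_{N-1}|\,|\theta_N\cdots\theta_{n+N}|$, and the second factor equals $l_{[n]}$ by periodicity.
\item For negative $n$ I would avoid the case analysis. The convention is designed so that $l_{[n]}/l_{[n-1]} = |\theta_n|$ for every $n\in\Z$; I would check this directly at the seams $n=-1,0$ and for $n\le -2$.
\item Then $n\mapsto l_{[n+N]}/l_{[n]}$ satisfies
\[
\frac{l_{[n+N]}/l_{[n]}}{l_{[n+N-1]}/l_{[n-1]}} = \frac{|\theta_{n+N}|}{|\theta_n|} = 1,
\]
so this ratio is constant. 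Its value at $n=-1$ is $l_{[N-1]}/1$.
\end{itemize}

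\textbf{Item (1).} Since $\varepsilon_{n+N+1}=\varepsilon_{n+1}$, we get
\[
F^{[n+N]}(z) = z - \varepsilon_{n+1}\, l_{[N-1]}\, l_{[n]} = l_{[N-1]}\Big(\tfrac{z}{l_{[N-1]}} - \varepsilon_{n+1} l_{[n]}\Big) = l_{[N-1]}\, F^{[n]}\big(z/l_{[N-1]}\big).
\]

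\textbf{Item (2).} Let $\lambda = l_{[N-1]}$ and $A(z)=\lambda z$. Item (1) says $F^{[n+N]} = A\circ F^{[n]}\circ A^{-1}$. The relations defining $\Tbold^{\textnormal{gp}}$ are $N$-periodic, so $Q_{[n]}\mapsto Q_{[n+N]}$ extends to a group automorphism $\tau$ of $\Tbold^{\textnormal{gp}}$. By Proposition \ref{prop:parametrization}, for every $P$,
\[
F^{\tau(P)} = A\circ F^P\circ A^{-1},
\]
and hence $V_{\tau(P)} = \lambda V_P$.

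From this:
\begin{itemize}
\item $J_{n+N} = \lambda J_n$, because its endpoints are $V_{-Q_{[n+N]}}$ and $V_{-Q_{[n+N]}+\varepsilon_n\varepsilon_{n+1}Q_{[n+N-1]}}$, which are the images under $\tau$ of the endpoints of $J_n$.
\item Likewise $J_{n+N,j} = \lambda J_{n,j}$ for $j\in\{0,1\}$. Since $A$ preserves the lower half plane and vertical lines, $W_{n+N,j} = A(W_{n,j})$.
\item $\tau$ preserves the chronological order $<$: it preserves the generating set of the positive cone, because the parabolic clause is vacuous here. Therefore $\tau$ maps the index ranges $0\le P<Q_{[n]}$ and $0\le P<Q_{[n]}-\varepsilon_n\varepsilon_{n+1}Q_{[n-1]}$ bijectively onto the corresponding ranges at level $n+N$.
\item Finally, $F^{-\tau(P)}(A\,W) = A\,F^{-P}(W)$.
\end{itemize}
Together these give $\mathcal{W}_{n+N} = A(\mathcal{W}_n)$.

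The main point needing care is the sign and indexing bookkeeping for $l_{[n]}$ with $n<0$. The cleanest route is to establish $l_{[n]}/l_{[n-1]}=|\theta_n|$ for all $n$ first, rather than comparing the three cases of the definition directly.
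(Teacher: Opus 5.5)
Your proof is correct and follows the paper's route. The paper also obtains $l_{[n+N]} = l_{[N-1]}\,l_{[n]}$ from periodicity, writing the ratio directly as $|\theta_{n+1}\cdots\theta_{n+N}|$; like your argument, this relies on $l_{[n]}/l_{[n-1]} = |\theta_n|$ for all $n\in\Z$, which requires reading the $n\le -2$ case of the definition as $|\theta_{-1}\cdots\theta_{n+1}|^{-1}$ rather than the literal $|\theta_{-1}\cdots\theta_{-n+1}|^{-1}$. It then substitutes into $F^{[n+N]}$ to get (1) and simply asserts that (2) follows from (1). Your argument for (2) correctly supplies the details the paper omits: the automorphism $\tau\colon Q_{[n]}\mapsto Q_{[n+N]}$, the identity $V_{\tau(P)} = l_{[N-1]}V_P$, and the fact that $\tau$ permutes the slow generators and hence preserves the chronological order.
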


\begin{proof}
    Periodicity implies that for all $n \in \Z$,
    \[
        \frac{l_{[n+N]}}{l_{[n]}} = |\theta_{n+1}\ldots\theta_{n+N}| = |\theta_{0}\ldots \theta_{N-1}| = l_{[N-1]}.
    \]
    Therefore,
    \[
        F^{[n+N]}(z) = z - \varepsilon_{n+N+1} l_{[n+N]} = z - \varepsilon_{n+1} l_{[n]} l_{[N-1]}
        = l_{[N-1]} F^{[n]}\left( \frac{z}{l_{[N-1]}} \right).
    \]
    Property (2) follows from property (1).
\end{proof}

\bibliographystyle{alpha}
 
{\small \bibliography{bibliography}}

@article{AC17,
author = {Avila, Artur and Cheraghi, Davoud},
year = {2017},
pages = {2005--2062},
title = {Statistical properties of quadratic polynomials with a neutral fixed point},
volume = {20},
number = {8},
journal = {J. Eur. Math. Soc}
}

@article{BC12,
author = {Buff, Xavier and Ch\'eritat, Arnaud},
year = {2012},
pages = {673--746},
title = {Quadratic {Julia} sets with positive area},
volume = {176},
journal = {Ann. Math.}
}

@article{CC15,
author = {Cheraghi, Davoud and Cheritat, Arnaud},
year = {2015},
pages = {677--742},
title = {A proof of the {Marmi-Moussa-Yoccoz} conjecture for rotation numbers of high type},
volume = {202},
journal = {Invent. Math.}
}

@article{Che13,
author = {Cheraghi, Davoud},
year = {2013},
pages = {999--1035},
title = {Typical orbits of quadratic polynomials with a neutral fixed point: {Brjuno} type},
volume = {332},
number = {3},
journal = {Commun. Math. Phys.}
}

@article{Che19,
author = {Cheraghi, Davoud},
year = {2019},
pages = {59--138},
title = {Typical orbits of quadratic polynomials with a neutral fixed point: {Non-Brjuno} type},
volume = {52},
number = {1},
journal = {Ann. Sci. \'{E}c. Norm. Sup.}
}

@article{Che25,
author = {Cheraghi, Davoud},
year = {2025},
pages = {1321--1383},
title = {Topology of irrationally indifferent attractors},
volume = {6},
number = {58},
journal = {Ann. Sci. \'{E}c. Norm. Sup.}
}

@article{dF99,
  title={Asymptotic rigidity of scaling ratios for critical circle mappings},
  author={de Faria, Edson},
  journal={Ergod. Theory Dyn. Syst.},
  year={1999},
  volume={19},
  number={4},
  pages={995--1035}
}

@article{dFdM2,
  title={Rigidity of critical circle mappings {II}},
  author={de Faria, Edson and de Melo, Welington},
  journal={J. Amer. Math. Soc.},
  year={1999},
  volume={13},
  pages={343--370}
}

@book{dMvS93, 
    author = {de Melo, Welington and van Strien, Sebastian},
    publisher = {Springer-Verlag},
    title = {One-Dimensional Dynamics},
    year = {1993},
}

@incollection{D87,
author = {Douady, Adrien},
title = {Disques de {S}iegel et anneaux de {H}erman},
booktitle = {S\'eminaire Bourbaki : volume 1986/87, expos\'es 669-685},
journal = {Ast\'{e}risque},
series = {Ast\'erisque},
publisher = {Soci\'et\'e math\'ematique de France},
number = {152-153},
year = {1987},
pages = {4, 151--172},
issn = {0303-1179,2492-5926}
}

@article{DLS,
author = {Dudko, Dzmitry and Lyubich, Mikhail and Selinger, Nikita},
year = {2020},
pages = {653--733},
title = {Pacman renormalization and self-similarity of the {Mandelbrot} set near {Siegel} parameters},
volume = {33},
number = {3},
journal = {J. Amer. Math. Soc.}
}

@misc{DL22,
author = {Dudko, Dzmitry and Lyubich, Mikhail},
publisher = {arXiv},
year = {2022},
copyright = {arXiv.org perpetual, non-exclusive license},
title = {Uniform a priori bounds for neutral renormalization},
eprint = {2210.09280},
doi = {10.48550/arXiv.2210.09280},
note = {\href{https://arxiv.org/abs/2210.09280}{arXiv.2210.09280}}
}

@article{DL23,
author = {Dudko, Dzmitry and Lyubich, Mikhail},
year = {2023},
title = {{Local connectivity of the Mandelbrot set at some satellite parameters of bounded type}},
journal = {Geom. Funct. Anal.},
volume = {33},
number = {4},
pages={912--1047}
}

@misc{DLL,
author = {Dudko, Dzmitry and Lim, Willie Rush and Lyubich, Mikhail},
year = {2026},
title = {Rigidity of the attractor of neutral quadratic polynomials},
note = {Manuscript}
}

@misc{DL26,
author = {Dudko, Dzmitry and Lyubich, Mikhail},
year = {2026},
title = {Uniform a priori bounds for neutral renormalization. {Variation I: Sector Renormalization}},
note = {Manuscript}
}

@article{Hur,
mylabel = {H1889},
author = {Hurwitz, Adolf},
year = {1889},
pages = {367--405},
title = {\"{U}ber eine besondere {A}rt der {K}ettenbruch-{E}ntwicklung reeller Gr\"o{\ss}en},
volume = {12},
journal = {Acta Math.}
}

@misc{IS,
author = {Inou, Hiroyuki and Shishikura, Mitsuhiro},
publisher = {arXiv},
year = {2006},
title = {The renormalization for parabolic fixed points and their perturbation},
note = {\href{https://www.math.kyoto-u.ac.jp/~mitsu/pararenorm/ParabolicRenormalization.pdf}{Preprint}}
}

@book{Khin,
    author = {Khinchin, Aleksandr Y.},
    title = {Continued fractions},
    publisher = {Dover Publications, Inc.},
    year = {1997},
    address = {Mineola, NY},
    note = {With a preface by B. V. Gnedenko.
Translated from the third (1961) Russian edition. Reprint of the 1964 translation.}
}

@Inbook{Lan88,
author={Lanford, Oscar E.},
title={Renormalization Group Methods for Circle Mappings},
bookTitle={Nonlinear Evolution and Chaotic Phenomena},
year={1988},
publisher={Springer US},
pages={25--36},
}

@article{Lim24,
author = {Lim, Willie Rush},
year = {2026},
title = {Hyperbolicity of renormalization of critical quasicircle maps},
journal = {Commun. Math. Phys},
volume = {407},
number = {88}
}

@misc{Lim26,
author = {Lim, Willie Rush},
year = {2026},
title = {Lebesgue measure of the postcritical set of neutral quadratic polynomials},
note = {Manuscript}
}

@article{McM98,
author = {McMullen, Curtis},
year = {1998},
pages = {247--292},
title = {Self-similarity of {Siegel} disks and {Hausdorff} dimension of {Julia} sets},
volume = {180},
number = {2},
journal = {Acta Math.}
}

@article{Mi1873,
    author = {Minnigerode, Bernhard},
    title = {\"{U}ber eine neue Methode, die Pellsche Gleichung aufzul\"{o}sen},
    journal = {Nachr. G\"ottingen},
    year = {1873},
    pages = {619--653}
}

@article{Na81,
author = {Nakada, Hitoshi},
year = {1981},
pages = {399--426},
title = {Metrical theory for a class of continued fraction transformations and their natural extension},
volume = {4},
number = {2},
journal = {Tokyo J. Math.}
}

@article{Si42,
year = {1942},
volume = {43},
number = {4},
pages = {607--612},
author = {Siegel, Carl Ludwig},
title = {Iteration of analytic functions},
journal = {Ann. Math.}
}

@article{SY24,
    author = {Shishikura, Mitsuhiro and Yang, Fei},
    title = {The high type quadratic {Siegel} disks are {Jordan} domains},
    journal = {J. Eur. Math. Soc.},
    year = {2024},
    volume = {27},
    number = {11},
    pages = {4501--4562}
}

@article{Y99,
  title={Complex bounds for renormalization of critical circle maps},
  author={Yampolsky, Michael},
  journal={Ergod. Theory Dyn. Syst.},
  year={1999},
  volume={19},
  number={1},
  pages={227--257}
}

@article{Y01,
  title={The attractor of renormalization and rigidity of towers of critical circle maps},
  author={Yampolsky, Michael},
  journal={Commun. Math. Phys.},
  volume = {218},
  number = {3},
  year = {2001},
  pages = {537--568}
}

@article{Y02,
  title={Hyperbolicity of renormalization of critical circle maps},
  author={Yampolsky, Michael},
  journal={Publ. Math. Inst. Hautes {\'E}tudes Sci.},
  volume = {96},
  year={2002},
  pages = {1--41}
}

@article{Y03,
  title={Renormalization horseshoe for critical circle maps},
  author={Yampolsky, Michael},
  journal={Commun. Math. Phys.},
  volume = {240},
  year={2003},
  number={1--2},
  pages = {75--96}
}

@incollection{Yoc95,
     author = {Yoccoz, Jean-Christophe},
     title = {Th\'eor\`eme de {Siegel,} nombres de {Bruno} et polyn\^omes quadratiques},
     booktitle = {Petits diviseurs en dimension 1},
     series = {Ast\'erisque},
     pages = {1--88},
     publisher = {Soci\'et\'e math\'ematique de France},
     number = {231},
     year = {1995}
}

\end{document}